\newcommand{\A}{\mathbf{A}}
\newcommand{\B}{\mathbf{B}}
\newcommand{\C}{\mathbf{C}}
\newcommand{\D}{\mathbf{D}}
\newcommand{\RR}{\mathbb{R}}
\newcommand{\E}{\mathbb{E}}
\newcommand{\EE}{\mathcal{E}}
\newcommand{\QQ}{\mathbb{Q}}
\newcommand{\I}{\mathbf{I}}
\newcommand{\II}{\mathcal{I}}
\newcommand{\J}{\mathbf{J}}
\newcommand{\K}{\mathbf{K}}
\newcommand{\KK}{\mathcal{K}}
\newcommand{\tr}{\textnormal{tr}}
\newcommand{\Aut}{\textnormal{Aut}}
\newcommand{\Age}{\textnormal{Age}}
\newcommand{\Fra}{\text{Fra\"iss\'e}}
\theoremstyle{definition}
\newtheorem{theorem}{Theorem}[section]
\newtheorem{definition}[theorem]{Definition}
\newtheorem{prop}[theorem]{Proposition}
\newtheorem{corollary}[theorem]{Corollary}
\newtheorem{lemma}[theorem]{Lemma}
\newtheorem{fact}[theorem]{Fact}
\newtheorem{question}{Question}
\title{Box Ramsey and Canonical Colourings}
\author{Keegan Dasilva Barbosa}
\begin{document}
	
	\maketitle

	\begin{abstract}
		This paper introduces the concept of a productive notion of big Ramsey degree and showcases its versatility through a handful of applications. The main focus is notably providing sufficient conditions for the existence of a finite canonical basis of equivalence relations, building upon the prior work of Laflamme, Sauer, and Vuksanovic. Additionally, a combinatorial analysis of indexed structures is conducted.
	\end{abstract}

	\section{Introduction}
	Structural Ramsey theory has garnered a significant amount of attention ever since the advent of the KPT correspondence, a beautiful theorem that marries finite combinatorics and topological dynamics. Much of the discussion in Structure Ramsey theory has centered around relational expansions, as they are the premier tool for computing Ramsey degrees \cite{ CDP1, CDP2, Dobrinen, ExactBRD, NVT,ProdRamsey,Zucker1,Zucker2,Zucker3}. The reason being is quite simple. Rather than work with a structure directly, it is easier to encode the structure onto something where a Ramsey type statement is well known (often a tree), and reduce the problem to simply counting the number of expansions which is a simple exercise in combinatorics.\\
	\\
	Not all Ramsey phenomena are equal however. In the case of \textit{small} Ramsey degrees, where quantifiers only occur on finite structures, Ramsey degrees are finite if and only if they are encoded by a relational expansion \cite{KPT, NVT, Zucker1}. Moreover, the existence of finite small Ramsey degrees in a \Fra\ class corresponds to the metrizability of the universal minimal flow of $\textnormal{Aut}(\K)$, where $\K$ is the associate \Fra\ structure. It is unclear however if \textit{big} Ramsey degrees are also always characterized by relational expansions. Zucker was able to show that if big Ramsey degrees for a structure could be given by a relational expansion, then there is an associated dynamical assertion about the universal completion flow associated to $\textnormal{Aut}(\K)$. Given relational expansions are the premier technology to compute Ramsey degrees, it seems the field does not quite know where to begin with this problem. \\
	\\
	This ultimately leads to the question we aim to solve in this paper. Rather than try to tackle Zucker's question directly, we look for a side channel and establish a connection between big Ramsey degrees and \textit{canonical} expansions. Canonical expansions are by no means a novel concept. Ramsey, the quintessential pioneer of the field, was the first to successfully solve the canonical expansion problem for $\omega$. Erd\"os and Rado used Ramsey's theorem to explicitly formulate precisely what the canonical equivalence relations associated to $\omega$ must be \cite{ErdosRado}. 
	In this paper, we extend the scope of Ramsey's groundbreaking result to encompass the following generalization.
	
	\begin{theorem}
		Let suppose $\mathcal{L}$ be a finite relational language with no unaries. Let $\K$ be an infinite $\mathcal{L}$ structure. If every substructure of cardinality $d$ has finite big Ramsey degree in $\K$, then there is a finite list of $d$-ary relations on $\K$ up to restriction.  
	\end{theorem}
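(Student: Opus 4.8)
The plan is to generalise the Erd\"os--Rado derivation of the canonical Ramsey theorem from the ordinary (here, big) Ramsey theorem. Fix one of the finitely many isomorphism types of $d$-element substructure $\A \le \K$; a $d$-ary relation in the sense of the statement is, for the substantive case flagged in the abstract, an arbitrary (possibly infinitely-valued) colouring of the set $\binom{\K}{\A}$ of copies of $\A$, equivalently an equivalence relation on it. The conclusion is a \emph{finiteness} assertion: only finitely many distinct restrictions to copies $\K'\cong\K$ can occur. The guiding principle is that finiteness of big Ramsey degrees lets one pass to a copy on which any \emph{finite} colouring takes boundedly many values, hence is pinned down by a bounded amount of finite data; crucially we need only finiteness of the list, not a unique normal form, so a big Ramsey degree larger than $1$ causes no difficulty. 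The obstruction to applying this directly is that an equivalence relation may have infinitely many classes, so the first and essential task is a \emph{finitisation}.

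Concretely, to an unordered pair of copies $X,Y \in \binom{\K}{\A}$ I would associate two pieces of data: the isomorphism type of the union $X\cup Y$ with $X$ and $Y$ marked, and the single bit recording whether $X$ and $Y$ are related. Since $|X\cup Y|\le 2d$ and $\LL$ is finite relational with no unaries, there are only finitely many such marked types, so this assignment is a genuinely finite colouring of the configurations spanned by two copies of $\A$. Applying finiteness of big Ramsey degrees to this finite colouring, I would pass to a copy $\K'\cong\K$ on which the colouring takes boundedly many values. On $\K'$ the relation is then governed entirely by finite data drawn from a fixed finite menu (a bounded-size set of admissible marked types together with their associated bits), and distinct input relations that survive this reduction differ only through this finite data.

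The finite list is read off at the end: the number of possible reduced data is bounded by a quantity depending only on $\LL$ and $d$, so up to restriction there are only finitely many relations, which is exactly the claim. I would also check the bookkeeping that makes the reduction faithful: that the marked-type colouring genuinely detects the relation across the various ways two copies of $\A$ can overlap, and that relations giving different reduced data really do have different restrictions (so the count is finite rather than merely finitely presented). Since we begin from an honest equivalence relation, the consistency constraints (reflexivity, symmetry, and transitivity) are inherited automatically rather than imposed, which removes what would otherwise be a delicate point.

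I expect the main obstacle to be the finitisation step itself, and specifically the mismatch of arities it creates. The colouring above lives on configurations of size up to $2d$ (and transitivity-type considerations implicitly touch triples, of size up to $3d$), whereas the hypothesis supplies finite big Ramsey degrees only for substructures of size exactly $d$. Bridging this gap is precisely where the productive notion of big Ramsey degree must do the work: control of a single $d$-element type has to be multiplied into control of pairs, and hence unions, of copies. Establishing that the productive structure transports finiteness from the single-type level to these higher configurations is the crux of the argument; once that is in hand, the remainder is the combinatorial accounting described above.
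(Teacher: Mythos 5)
Your proposal has the right Erd\H{o}s--Rado-style skeleton (pair the relation's indicator bit with an auxiliary finite colouring, apply a productive Ramsey statement, read off a finite menu), and this is indeed the shape of the paper's argument; but the auxiliary colouring you chose is too coarse, and that breaks the decisive step. You pair the bit with the marked isomorphism type of the union --- in the paper's language, the $\simeq$-class of the tuple. These classes are \emph{persistent} (every copy of $\K$ realizes all $m$ of them), but persistence is not enough: the pigeonhole only works if the auxiliary colouring is \emph{bad}, i.e.\ already realizes the full box Ramsey degree $t_\square$ on every copy. By the paper's box Ramsey theorem, $t_\square = \sum_i t(\B_i,\K)\cdot\vert\Aut(\B_i)\vert$, summed over $\simeq$-classes with traces $\B_i$, and this is in general strictly larger than $m$ because the traces themselves can have big Ramsey degree greater than $1$. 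So after you pass to a copy on which your (type, bit) colouring takes at most $t_\square$ values, a single marked type can still carry \emph{both} bits there ($t_\square \geq m$ leaves exactly that much slack), and then ``admissible marked types together with their associated bits'' does not determine the relation: continuum many relations share that data without agreeing on any copy. Concretely, take $\K = \QQ$ and $d=2$: there are $m=3$ marked types of pairs, but $t_\square = 1+2+2 = 5$ since two-element chains have Devlin degree $2$ in $\QQ$. The binary relation $R(x,y)$ iff $x<y$ and $\{x,y\}$ is of the first Devlin embedding type is, on \emph{every} copy of $\QQ$, nonconstant on the increasing-pair type (both Devlin types persist in every copy), so it is never type-determined, yet your colouring takes boundedly many values everywhere. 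The paper's proof avoids this by pairing the indicator $\delta$ with a \emph{bad} colouring $\chi$ into exactly $t_\square$ colours, whose existence is the lower-bound half of the box Ramsey theorem ($\simeq$-classes refined by bad colourings witnessing $t(\B_i,\K)$ and by the $\Aut(\B_i)$ factor); then ``at most $t_\square$ values of $(\delta,\chi)$'' against ``$\chi$ takes all $t_\square$ values on every copy'' forces $\delta$ to be a function of $\chi$, and the finite list is the $2^{t_\square}$ relations of the form $R_S$: related iff $\chi \in S$, for $S \subseteq t_\square$.

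A second problem comes from your reading of the statement, and it is what makes your self-identified ``crux'' unfillable. A $d$-ary relation on $\K$ is a subset of $\K^d$: its indicator is a two-colouring of $d$-tuples of \emph{points}, and since the language has no unaries all singletons are isomorphic, so these tuples live in $\prod_{i=1}^{d}\binom{\K}{\A}$ with $\A$ a singleton. The traces of such tuples have size \emph{at most} $d$, so the hypothesis (finite big Ramsey degrees in cardinality $d$, hence in smaller cardinalities) feeds directly into the box Ramsey theorem to give $t_\square((\A,\dots,\A),\K) < \infty$; there is no arity mismatch at all. By instead construing the statement as being about equivalence relations on $\binom{\K}{\A}$ for a $d$-element $\A$, you land on configurations of size up to $2d$, and you are right that this cannot be reached from the hypothesis --- finite big Ramsey degrees in size $d$ do not transport upward to size $2d$, and this is precisely why the paper's companion result on equivalence relations (Theorem \ref{GeneralizedER}) takes $t_\square((\A,\A),\K)<\infty$ as a hypothesis rather than deriving it. So under your reading the deferred crux is not a gap that the productive machinery can close; under the correct reading there is no crux to defer, but the bad-colouring issue above remains and is the genuine missing idea.
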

	
	This will appear in this paper as Theorem \ref{GeneralizedER}. A corollary to the above is that there is also a finite list of equivalence relations. Of all the relations to consider on a structure $\K$, equivalence relations are the most important from the perspective of Ramsey theory, as they are in one-to-one correspondence with colourings modulo permutations of colour classes.
	
	\begin{theorem}
		Let $\K$ be a relational structure with the property that for every $n$, there are only finitely many substructures of cardinality $n$. Every finite substructure has finite big Ramsey degree if and only if every finite substructure admits a finite \textit{Ramsey basis}.
	\end{theorem}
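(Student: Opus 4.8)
The plan is to establish the two implications separately, handling the more direct reverse direction first and then deducing the forward direction from Theorem \ref{GeneralizedER}. For the reverse implication, I would assume that every finite substructure admits a finite Ramsey basis and fix such a substructure $A$. Writing $\mathcal{B} = \{E_1, \ldots, E_k\}$ for its basis---where each $E_i$ is a canonical equivalence relation on $\emb(A, \K)$ with finitely many classes---I would set $m = \max_i |\emb(A,\K)/E_i|$. Using the correspondence between colourings and equivalence relations recalled in the introduction, any finite colouring $c$ of the copies of $A$ yields an equivalence relation $E_c$; the defining property of the basis produces a copy $\K' \hookrightarrow \K$ and an index $i$ with $E_c \restriction \K' = E_i \restriction \K'$, so that $c$ takes at most $m$ values on the copies of $A$ inside $\K'$. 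Since $m$ is independent of $c$, this bounds the big Ramsey degree of $A$ by $m$.

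For the forward implication, I would assume that every finite substructure has finite big Ramsey degree and fix $A$ of cardinality $d$. The standing assumption that $\K$ has only finitely many substructures of each cardinality should let me present the relevant reduct of $\K$ in a finite relational language without unary symbols, so that Theorem \ref{GeneralizedER} applies and furnishes a finite list of $d$-ary relations on $\K$ up to restriction. Passing to its corollary, the equivalence relations generated by this list form a finite family $\mathcal{B}$ of canonical equivalence relations on $\emb(A,\K)$, each with finitely many classes, to which every finite colouring of the copies of $A$ reduces on some copy $\K' \hookrightarrow \K$. This family is then the desired finite Ramsey basis.

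The hard part, I expect, will be the forward direction, and precisely the passage from the list of $d$-ary relations to an honest basis of equivalence relations. I would need to verify that the canonical relations extracted are genuinely equivalence relations, that each carries only finitely many classes---this is what both matches the definition of a Ramsey basis and makes the reverse bound meaningful---and that every finite colouring reduces to one of them rather than to some arbitrary relation merely definable from the list. A secondary technical point worth attention is reconciling the hypotheses: translating ``finitely many substructures of each cardinality'' into the finite-language, no-unaries framework demanded by Theorem \ref{GeneralizedER}, so that the corollary can be invoked without loss of generality.
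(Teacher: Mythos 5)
Your reverse implication rests on a premise that is false, and necessarily so: that each basis element $E_i$ has finitely many classes, so that $m=\max_i|\emb(\A,\K)/E_i|$ is finite. The definition of a Ramsey basis places no such restriction on its members, and in fact no Ramsey basis whatsoever can satisfy it once $\binom{\K}{\A}$ is infinite: the equality relation is itself an element of $\EE_\K(\A)$, so some $\tilde{\E}\in\mathcal{B}$ must agree with equality on $\binom{\K'}{\A}$ for some copy $\K'$, and such a $\tilde{\E}$ has infinitely many classes. Concretely, the Erd\"os--Rado basis for $[\omega]^d$ consists of the relations ``agreement on the coordinates in $I$'' for $I\subseteq\{1,\dots,d\}$, and every one of them except the trivial one ($I=\emptyset$) has infinitely many classes. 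So your $m$ is infinite and the proposed bound is vacuous. What the reduction honestly yields for a colouring $c$ with $k$ colours is this: $|c[\binom{\K'}{\A}]|$ equals the number of classes of $E_i\upharpoonright\K'$, which is at most $k$ simply because $E_c$ has at most $k$ classes --- a bound depending on $k$, whereas the big Ramsey degree demands a single $t$ that works for every finite $k$. Producing that uniform bound (for instance, by analysing which basis elements can agree with a finite colouring on a copy, and showing that on suitable subcopies their class-counts are bounded independently of the colouring) is the actual content of this direction; the paper itself leaves this direction implicit, but your argument as written does not supply it.

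Your forward implication aims at the right theorem but is routed wrongly. Theorem \ref{GeneralizedER} takes as hypothesis the finiteness of the \emph{box} Ramsey degree $t_\square((\A,\A),\K)$, not of big Ramsey degrees, and its conclusion is already a finite Ramsey basis inside $\EE_\K(\A)$; there is no need to pass through $d$-ary relations on points of $\K$, no unary-free presentation to arrange, and no ``finitely many classes'' property to verify (indeed, as above, it cannot hold). The missing bridge --- and the place where the hypothesis that $\K$ has only finitely many substructures of each cardinality is actually used --- is the box Ramsey formula $t_\square((\A,\A),\K)=\sum_i t(\B_i,\K)\cdot|\Aut(\B_i)|$: the traces $\B_i$ of pairs of copies of $\A$ have cardinality at most $2|\A|$, so the hypothesis guarantees there are only finitely many $\simeq$-classes and hence finitely many summands; each $t(\B_i,\K)$ is finite because the $\B_i$ are finite substructures and, by assumption, every finite substructure has finite big Ramsey degree; and each $\Aut(\B_i)$ is finite. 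Hence $t_\square((\A,\A),\K)<\infty$ and Theorem \ref{GeneralizedER} applies directly to produce the basis. The step you flag as the hard part --- converting a list of $d$-ary relations into an honest basis of equivalence relations --- is a difficulty created by your detour, not one present in the paper's argument.
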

	A \textit{Ramsey basis} is simply a set of equivalence relations that is maximal with respect to restriction to subcopies of $\K$.
	We will also see that if a structure $\K$ is fairly saturated, namely if it satisfies a condition we call \textit{free point duplication}, we can characterize its equivalence relations in a very strong sense.
	\begin{theorem}
		Suppose $\K$ is a countable \Fra\ structure that satisfies free point duplication. For any equivalence relation $\E$ on $\K$, there is a $\K^\prime \in \binom{\K}{\K}$ such that $\E$ restricted to $\K^\prime$ is either the equality relation, or the trivial relation with one class. Equivalently, for any $\chi:\K \rightarrow \omega$, there is $\K^\prime \in \binom{\K}{\K}$ such that $\chi \upharpoonright \K^\prime$ is either injective or constant. 
	\end{theorem}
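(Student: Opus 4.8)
The plan is to prove the colouring reformulation and deduce the equivalence-relation statement from it, since the two are interchangeable. Given an equivalence relation $\E$ on the countable set $\K$, enumerate its (at most countably many) classes and let $\chi(x)$ record the index of the class containing $x$. Then $\chi\upharpoonright\K^\prime$ is injective exactly when $\E\upharpoonright\K^\prime$ is the equality relation, and $\chi\upharpoonright\K^\prime$ is constant exactly when $\E\upharpoonright\K^\prime$ has a single class; conversely any $\chi:\K\to\omega$ arises from the relation $x\sim y \iff \chi(x)=\chi(y)$. So it suffices to show that for every $\chi:\K\to\omega$ there is some $\K^\prime\in\binom{\K}{\K}$ on which $\chi$ is either injective or constant.

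Call a colour $c$ \emph{large} if $\chi^{-1}(c)$ contains a subcopy of $\K$. The argument splits on whether a large colour exists. If some colour $c$ is large, fix $\K^\prime\in\binom{\K}{\K}$ with $\K^\prime\subseteq\chi^{-1}(c)$; then $\chi\upharpoonright\K^\prime$ is constant and we are finished. The substance of the proof is the opposite case, where no colour class contains a copy of $\K$, and there I must manufacture a \emph{rainbow} copy on which $\chi$ is injective.

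Assume no colour is large. I would build $\K^\prime$ by a \Fra -style recursion, producing an increasing chain $A_0\subseteq A_1\subseteq\cdots$ of finite substructures whose union is isomorphic to $\K$, while keeping $\chi$ injective on each $A_n$. Fix in advance an enumeration of the one-point extension tasks, arranged so that meeting them all forces the union to realise every extension type and to be homogeneous, hence a copy of $\K$. At the step handling an extension type $\tau$ over $A_n$, let $R_\tau$ be the set of points of $\K$ realising $\tau$. Two facts, both to be drawn from free point duplication, drive the construction: first, $R_\tau$ itself contains a copy of $\K$, since duplication makes the realisations of any extension type abundant enough to carry a full copy; second, every copy of $\K$ is vertex-indivisible, so any partition of it into finitely many pieces leaves one piece containing a copy of $\K$. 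Granting these, suppose toward a contradiction that every point of $R_\tau$ received one of the finitely many colours already appearing on $A_n$. Then the copy of $\K$ sitting inside $R_\tau$ is finitely coloured, so by indivisibility one of those colours would be large, contradicting the case assumption. Hence $R_\tau$ meets a colour not yet used on $A_n$; choosing a realisation of $\tau$ with such a fresh colour gives $A_{n+1}$, injectivity is preserved, and the recursion runs to completion to produce the desired rainbow copy.

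The main obstacle is extracting the two structural facts above from free point duplication, and the indivisibility statement is the crux. I expect it to follow by iterating duplications so as to absorb a finite colouring into a single persistent colour, but formalising this—showing that repeated duplication lets one grow a monochromatic copy inside whichever colour survives—is precisely where the hypothesis must be used at full strength. The abundance fact that $R_\tau$ carries a copy of $\K$ should be more routine, amounting to the claim that conditioning on a finite extension type preserves free point duplication on the relativised structure, though this inheritance must be verified. Once both are in hand, the remaining work is the standard but careful dovetailing that guarantees the union is genuinely a copy of $\K$ while only fresh colours are ever selected.
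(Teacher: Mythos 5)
Your skeleton (either some colour class contains a copy of $\K$, in which case we restrict and get a constant colouring, or else we build a rainbow copy by a Fra\"iss\'e-style recursion) would indeed prove the theorem, but it is conditional on the two structural facts you flag at the end, and those are not side lemmas: they are the entire mathematical content, and neither follows from free point duplication by any routine argument. Free duplication is a strictly one-point-base condition: given $x R_i y$ it produces duplicates $z$ of $y$ over the \emph{single} point $x$, with a prescribed relation back to $y$. It says nothing about realizations of types over sets of size two or more; already in a graph language it never produces a vertex nonadjacent to both endpoints of an edge, since every duplicate it yields is adjacent to the base point. So your fact (a), that the realization set $R_\tau$ of a one-point extension type over $A_n$ contains a copy of $\K$, cannot be obtained by ``conditioning preserves free duplication''; worse, as a statement about Fra\"iss\'e limits it is simply false in general (in the generic triangle-free graph, the set of neighbours of a vertex is an infinite edgeless graph), so any proof must lean on hypotheses far beyond one-point saturation. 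Your fact (b), indivisibility of $\K$, is exactly the singleton case of finite big Ramsey degrees, and in the paper it is not derived from free duplication at all: it is imported wholesale from the Coulson--Dobrinen--Patel theorem, whose hypotheses (a finite language of irreflexive binary relations, each pair related in exactly one way, and SDAP\textsuperscript{+} or LSDAP\textsuperscript{+}) are standing assumptions of that section of the paper even though they are suppressed in the introduction's statement. An entire paper of Coulson, Dobrinen, and Patel is devoted to proving indivisibility under SDAP\textsuperscript{+}; your hope of extracting it from free duplication by ``iterating duplications'' is precisely the step that does not exist, and you acknowledge as much.

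For comparison, the paper's proof runs in the opposite direction and never builds a copy point by point. Using the CDP theorem, it first puts an arbitrary equivalence relation $\E$ into canonical form on some copy: $x \E y$ if and only if the diagonal tree coded by the pair $\{x,y\}$ lies in a fixed set $S$ of tree types. Free duplication is then used exactly once, in tandem with transitivity of $\E$: if $S$ contains one height-three tree (one pair type), duplication manufactures a witness $z$ that forms a different pair type with $x$ while coding the same tree with $y$, so transitivity forces every pair type into $S$; hence $\E$ is either equality (only height-one trees in $S$) or has a single class. If you want to salvage your approach, you would have to prove your facts (a) and (b) under the paper's actual standing hypotheses, at which point you would be re-deriving, rather than bypassing, the coding-tree machinery that the paper cites.
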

	The above appears as Theorem \ref{equivdichotomy}. We are also able compute big Ramsey degrees for indexed structures as defined by Kubis and Shelah \cite{Diversifications}, provided that the indexed set and labeled set are equivalent. We then use this to get a small application to computing big Ramsey degrees for a very specific class of Urysohn spaces.\\
	\\
	All of our results are a consequence of a productive version of Ramsey degree that we denote \textit{box Ramsey degree}. The most important aspect of this combinatorial characteristic is that it is equivalent to the existence of finite big Ramsey degrees provided we are working in a relational language with at most finitely many relations of each arity. \\
	\\
	The paper is structured as follows: Section 2 serves as a preliminary, where we define the essential objects in structural Ramsey theory. Additionally, we provide a brief overview of topological dynamics, and demonstrate that $\QQ$ holds a universal property as a \Fra\ object. Section 3 will encompass our main results, prominently featuring the box Ramsey theorem along with its various applications. In Section 4, we present our concluding remarks and discuss open questions that arise from our findings. Following that, in Section 5, we express our heartfelt gratitude to the remarkable individuals who have influenced the author throughout this journey. We also extend our appreciation to the Fields Institute for their generous support and funding.

	\section{Preliminaries}

	\subsection{Structural Ramsey Theory}
	
	At its core, structural Ramsey theory is the study of regularity preservation under partition. The basic unit of measurement in the partition calculus is that of \textit{Ramsey degree}, an integer representing how close an object is to satisfying Ramsey's theorem in the classical sense.

	\begin{theorem}[Ramsey's Theorem (finitary) \cite{Ramsey}]
		For every collection of positive integers $m,n,k \in \omega$ with $m<n$, there is an integer $N\in \omega$ such that for every for every $k$-colouring $c:[N]^m \rightarrow k$, there is $S\subseteq N$ of size $n$ such that $c$ is constant on $[S]^m$.  
	\end{theorem}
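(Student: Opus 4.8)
The plan is to induct on the arity $m$, proving for each fixed $m$ that a suitable $N$ exists for all choices of $n$ and $k$. Write $R_m(n,k)$ for the least $N$ (if it exists) witnessing the statement, so the goal is to show $R_m(n,k)<\infty$ for all $n>m$ and every $k$. The base case $m=1$ is exactly the pigeonhole principle: a $k$-colouring of the $N$ points of $[N]^1$ must place at least $n$ of them in a single colour class as soon as $N\ge k(n-1)+1$, so $R_1(n,k)=k(n-1)+1$ suffices.

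For the inductive step I assume $R_{m-1}(\,\cdot\,,k)$ is finite for every target size and every $k$, and I build a long sequence of points whose $m$-sets are colour-coherent. Concretely, starting from $B_0=N$, I repeatedly choose a point $a_i\in B_{i-1}$ and apply the $(m-1)$-ary inductive hypothesis to the induced colouring $T\mapsto c(\{a_i\}\cup T)$ of $[B_{i-1}\setminus\{a_i\}]^{m-1}$; this yields a large subset $B_i\subseteq B_{i-1}\setminus\{a_i\}$ on which that induced colouring is constant, with value $\gamma_i\in k$. The nesting $B_0\supseteq B_1\supseteq\cdots$ guarantees that for any indices $i_1<\cdots<i_m$ the set $\{a_{i_2},\dots,a_{i_m}\}$ lies in $B_{i_1}$, whence $c(\{a_{i_1},\dots,a_{i_m}\})=\gamma_{i_1}$ depends only on the position of the least element.

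Once such a sequence $a_1,\dots,a_s$ of length $s=k(n-1)+1$ has been extracted, a final application of pigeonhole to the positional colours $\gamma_1,\dots,\gamma_s\in k$ produces a single colour $\gamma$ attained at $n$ indices $i_1<\cdots<i_n$; the set $S=\{a_{i_1},\dots,a_{i_n}\}$ then has size $n$, and every $m$-subset of it is coloured $\gamma$, since the least element of any such subset is one of the $a_{i_j}$. This completes the extraction.

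The main obstacle, and the only genuinely quantitative part, is arranging that the construction does not run out of points before reaching length $s$: each step shrinks the active set from $B_{i-1}$ to a Ramsey subset of size roughly $R_{m-1}(\,\cdot\,,k)$, so to guarantee $s$ steps one must fix $N$ by unwinding this recursion from the top, which produces the familiar tower-type upper bound. Finiteness of $N$ is then immediate from the inductive finiteness of the $(m-1)$-ary numbers. I would also note that one can sidestep this bookkeeping entirely by first proving the infinite version (the same induction on $m$, now extracting an infinite colour-coherent sequence inside $\omega$) and then deducing the finite statement by a compactness argument: if it failed for some $m,n,k$, the \emph{bad} colourings of $[N]^m$ would form an infinite, finitely branching tree under restriction, and a branch furnished by K\"onig's lemma would contradict the infinite theorem.
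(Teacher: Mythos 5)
Your proof is correct, but there is nothing in the paper to compare it against: the paper states this theorem as a classical result, cited to Ramsey's 1930 paper, and gives no proof of it. Your argument is the standard induction on the arity $m$: pigeonhole at $m=1$; for the step, the nested extraction $B_0\supseteq B_1\supseteq\cdots$ with $a_i\in B_{i-1}$, $B_i\subseteq B_{i-1}\setminus\{a_i\}$ correctly ensures that the colour of any $m$-set from the sequence depends only on its least index (since $a_{i_j}\in B_{i_j-1}\subseteq B_{i_1}$ for $i_j>i_1$), and the final pigeonhole on the positional colours $\gamma_i$ with $s=k(n-1)+1$ finishes it; note also that your construction automatically makes the $a_i$ distinct, so $|S|=n$ as required. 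The one place you wave your hands, choosing $N$ by unwinding the recursion so the construction survives $s$ steps, is genuinely routine: the required size of $B_{i-1}$ is about $1+R_{m-1}(\,|B_i|,k)$, iterated $s$ times from the end, which is finite by the inductive hypothesis. Your alternative route via the infinite theorem plus K\"onig's lemma is also sound (restrictions of bad colourings are bad, so the bad colourings form a finitely branching tree whose levels are all nonempty if the finite statement fails), and it is in fact closer in spirit to how the finitary statement is usually deduced when only the infinitary version has been established.
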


	\begin{theorem}[Ramsey's Theorem (infinitary) \cite{Ramsey}]
		For every pair of positive integers $m,k\in \omega$ and colouring $c:[\omega]^m \rightarrow k$, there is an infinite $S\subseteq \omega$ such that $c$ is constant on $[S]^m$.
	\end{theorem}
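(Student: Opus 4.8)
The plan is to induct on the arity $m$, with the infinitary pigeonhole principle serving as both the base case and the engine of the inductive step. For $m=1$ a colouring $c:[\omega]^1 \rightarrow k$ is just an assignment of one of $k$ colours to each natural number; since $\omega$ is partitioned into $k$ colour classes, at least one class $S$ must be infinite, and then $c$ is trivially constant on $[S]^1 = S$. This disposes of the base case.

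For the inductive step, I would assume the theorem holds for arity $m-1$ and let $c:[\omega]^m \rightarrow k$ be given. The key idea is to extract a nested sequence of infinite sets together with a sequence of ``pivot'' elements, arranged so that the colour of an $m$-set depends only on its least element. Concretely, I would build infinite sets $\omega = A_0 \supseteq A_1 \supseteq A_2 \supseteq \cdots$ and elements $a_0 < a_1 < \cdots$ recursively: set $a_i = \min A_i$, define the auxiliary $(m-1)$-ary colouring $c_i:[A_i \setminus \{a_i\}]^{m-1} \rightarrow k$ by $c_i(X) = c(\{a_i\} \cup X)$, and invoke the inductive hypothesis (noting that $A_i \setminus \{a_i\}$ is order-isomorphic to $\omega$) to obtain an infinite $A_{i+1} \subseteq A_i \setminus \{a_i\}$ on which $c_i$ is constant, with value $\gamma(a_i) \in k$.

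This recursion yields the crucial stabilization property: for any indices $i_0 < i_1 < \cdots < i_{m-1}$, the elements $a_{i_1}, \ldots, a_{i_{m-1}}$ all lie in $A_{i_0+1}$, so that $c(\{a_{i_0}, a_{i_1}, \ldots, a_{i_{m-1}}\}) = c_{i_0}(\{a_{i_1}, \ldots, a_{i_{m-1}}\}) = \gamma(a_{i_0})$. In other words, on the set $\{a_0, a_1, \ldots\}$ the original colour of an $m$-tuple is determined entirely by its minimum. It then remains to apply the infinitary pigeonhole principle one final time to the map $i \mapsto \gamma(a_i)$, which takes only $k$ values: there is an infinite index set $I$ on which $\gamma(a_i)$ is constant with some value $c^\ast$. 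Taking $S = \{a_i : i \in I\}$, every $m$-subset of $S$ has its minimum among the $a_i$ with $i \in I$, hence receives colour $c^\ast$, so $c$ is constant on $[S]^m$ as required.

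I expect the only real subtlety to be the bookkeeping in the recursive construction: verifying that each $A_{i+1}$ stays infinite (which is exactly what the inductive hypothesis guarantees) and that the pivots satisfy $a_{i_1}, \ldots, a_{i_{m-1}} \in A_{i_0+1}$, which is what licenses the reduction of the colour to $\gamma(a_{i_0})$. Once this ``colour depends only on the least element'' phenomenon is in place, the conclusion follows immediately from pigeonhole, so the conceptual heart of the argument is the reduction in arity rather than any delicate combinatorial estimate.
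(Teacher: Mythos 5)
Your proof is correct: the induction on arity with nested infinite sets $A_0 \supseteq A_1 \supseteq \cdots$, pivots $a_i = \min A_i$, and a final pigeonhole application to the colour-of-the-minimum function $\gamma$ is the classical argument for the infinite Ramsey theorem, and all the key verifications (each $A_{i+1}$ infinite, the pivots strictly increasing, $a_{i_1},\ldots,a_{i_{m-1}} \in A_{i_0+1}$) go through exactly as you describe. The paper itself states this theorem as a known classical result with a citation to Ramsey and gives no proof, so there is nothing internal to compare against; your argument is essentially the standard one found in the cited literature.
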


	In the structural setting, we replace integers with \textit{structures}. Most of our focus will be on relational structures, though some of our results can likely be brought to a more general setting. 
	
	\begin{definition}
		A (relational) \textit{structure} $ \A$ is a pair $(A, \{R_i^\A \}_{i\in I })$ where $A$ is a set and $R_i^\A$ are relations of finite arity on $A$.  
	\end{definition}

	\begin{definition}
		Suppose $\A$ and $\B$ are structures in the same relational signature $\{R_i \}_{i\in I}$. An \textit{embedding} of $\A$ into $\B$ is an injection  $f:A\rightarrow B $ such that for all $i\in I$, $R_i^\A(a_1,...,a_{n_i})\iff R_i^\B(f(a_1),...,f(a_{n_i}))$. We call the image of $f$ a \textit{copy} of $\A$ in $\B$. We let $\binom{\B}{\A}$ denote the set of copies of $\A$ in $\B$. 
	\end{definition}

	It is important to emphasize the if and only if in the definition of embedding. Consider the class of graphs as an example. What is colloquially referred to as subgraph, is not equivalent to a copy in our language. Instead, copies are those subgraphs which are \textit{induced}.

	\begin{definition}
		Given a (possibly infinite) relational structure $\K$, we let $\Age(\K)$ denote the class of all finite substructures of $\K$. 
	\end{definition}

	\begin{definition}
		Let $\KK$ be a class of finite structures in some relational signature. We say $\A$ has \textit{Ramsey degree} $t\in \omega$ if for every $\B \in \KK$ and $k\in \omega$, there is a $\C\in \KK$ such that for every $c:\binom{\C}{\A}\rightarrow k$, there is a $\B^\prime \in \binom{\C}{\B}$ such that $|c[\binom{\B^\prime }{\A}]| \leq t$. If no such $t$ exists, we say $\A$ does not have finite Ramsey degrees. 
	\end{definition}

	\begin{definition}
		We call a class of finite structures a \textit{Ramsey class}, or simply just \textit{Ramsey}, if every $\A\in \KK$ has finite Ramsey degree $1$. 
	\end{definition}

	Many classes are Ramsey. The most prototypical example is the class of finite linear orders, which is an immediate consequence of Ramsey's theorem. A majority of modern structural Ramsey theory centers around \textit{\Fra}\ classes. The reasoning is two fold. On the one hand, classes that are sufficiently directed and Ramsey are also \Fra.\ On the other hand, a remarkable result by Kechris, Pestov, and Todorcevic correlates the Ramsey property to \textit{extreme amenability}.

	\begin{definition}
		We call a topological group $G$ \textit{extremely amenable} if every continuous action of $G$ on a compact space $X$ admits a fixed point. 
	\end{definition}

	\begin{definition}
		We call a class of finite relational structures $\KK$ \Fra\ if and only if:
		\begin{itemize}
			\item If $\B\in \KK$ and $\A$ is an induced substructure of $\B$, $\A\in \KK$ (the hereditary property).
			\item For every $\A,\B \in \KK$, there is a $\C\in \KK$ which embeds both $\A$ and $\B$ (the joint embedding property).
			\item For every $\A,\B,\C \in \KK$ and pair of embeddings $f_1:\A\rightarrow \B$ and $g_1:\A\rightarrow \C$, there is a $\D\in \KK$ and embeddings $f_2:\B\rightarrow \D$ and $g_2 : \C\rightarrow \D$ such that $f_2\circ f_1 = g_2 \circ g_1$ (the amalgamation property).
		\end{itemize}
	\end{definition}

	\begin{theorem}[\Fra\ \cite{Fraisse}]
		$\KK$ is a \Fra\ class if and only if $\KK = \Age(\K)$ for some countable ultrahomogeneous structure $\K$. 
	\end{theorem}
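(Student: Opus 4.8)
The statement is a biconditional, so I would prove the two implications separately, and the backward direction is the routine one. Assuming $\K$ is countable and ultrahomogeneous, I would verify the three closure properties of $\Age(\K)$. The hereditary property is immediate, since an induced substructure of a finite substructure of $\K$ is again a finite substructure of $\K$. For joint embedding, given $\A,\B\in\Age(\K)$ I would fix copies of each inside $\K$ and pass to the finite substructure generated by their union. The only place ultrahomogeneity is genuinely needed is amalgamation: given embeddings $f_1\colon\A\to\B$ and $g_1\colon\A\to\C$, I would first embed $\B$ and $\C$ into $\K$, obtaining two copies of $\A$ inside $\K$ together with an isomorphism between them; ultrahomogeneity promotes this isomorphism to an automorphism $\sigma$ of $\K$, and applying $\sigma$ to one copy aligns the two images of $\A$. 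The substructure of $\K$ generated by the aligned copies of $\B$ and $\C$ is then the desired amalgam $\D$.

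The forward direction is the substantive one, and is where I would invest most of the effort. The plan is to construct $\K$ as the union of an increasing chain $\A_0\subseteq\A_1\subseteq\cdots$ of members of $\KK$, engineered so that $\K$ satisfies the extension property \textbf{(E)}: for every pair $\A\subseteq\B$ in $\KK$ and every embedding $f\colon\A\to\K$, there is an embedding $g\colon\B\to\K$ extending $f$. I would build the chain by a bookkeeping argument. Enumerating all tasks of the form ``an embedding $f\colon\A\to\A_n$ together with an extension $\A\subseteq\B$ in $\KK$'' — with each task listed cofinally often — at the stage that handles a given task I would amalgamate the current structure $\A_s$ with $\B$ over their common copy of $\A$, invoking the amalgamation property to produce a member $\D$ of $\KK$ into which both embed compatibly, and set $\A_{s+1}=\D$. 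Interleaving tasks that force each isomorphism type of $\KK$ to appear, using joint embedding, guarantees that every member of $\KK$ embeds into some $\A_n$.

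Two bookkeeping-independent observations then finish the age computation. Since each $\A_n\in\KK$ and every finite substructure of $\K$ sits inside some $\A_n$, the hereditary property gives $\Age(\K)\subseteq\KK$; the forcing tasks give the reverse inclusion, so $\Age(\K)=\KK$. Countability of $\K$ follows because there are only countably many stages and each $\A_n$ is finite. It remains to deduce ultrahomogeneity from \textbf{(E)}. Given finite substructures $\XX,\YY\subseteq\K$ and an isomorphism $\phi\colon\XX\to\YY$, I would run a back-and-forth over a fixed enumeration of $\K$: having built a partial isomorphism $\psi$ between finite substructures, to absorb a new domain point $a$ I would apply \textbf{(E)} to $\psi$, regarded as an embedding of its domain into $\K$, together with the one-point extension obtained by adjoining $a$, thereby locating an image for $a$; a symmetric application of \textbf{(E)} to $\psi^{-1}$ absorbs a new range point. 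The union of the resulting chain of partial isomorphisms is an automorphism of $\K$ extending $\phi$.

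The main obstacle I anticipate is organizing the bookkeeping so that \textbf{(E)} is met for every task while keeping the chain inside $\KK$: this is precisely what amalgamation is designed to supply at each step, but it relies on a hypothesis left implicit in the definition, namely that $\KK$ contains only countably many isomorphism types. This is automatic in the present setting, since a finite relational language with finitely many relations of each arity admits only finitely many structures on each finite carrier; without it the set of tasks need not be enumerable and $\K$ need not be countable, so I would flag it explicitly. The second delicate point is ensuring that the back-and-forth can always take a step on both sides, which reduces exactly to applying \textbf{(E)} to the relevant one-point extensions — on the domain directly and on the range via $\psi^{-1}$ — so it is the symmetry of \textbf{(E)} that makes the construction close up into a genuine automorphism.
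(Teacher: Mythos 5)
Your proposal is correct: it is the standard proof of Fra\"iss\'e's theorem, with the backward direction using ultrahomogeneity only for amalgamation, and the forward direction building the limit as a union of a chain in $\KK$ via amalgamation-driven bookkeeping, then extracting ultrahomogeneity from the extension property by back-and-forth. The paper itself offers no proof to compare against --- it states the result as a classical theorem with a citation to Fra\"iss\'e --- so there is nothing to reconcile; your argument is the one the citation points to. Your flag about countably many isomorphism types is well taken: the paper's definition of a \Fra\ class omits this hypothesis, and without it (e.g.\ in a signature with infinitely many relation symbols) the forward direction genuinely fails, since the limit could not be countable; in the paper's working context of finite relational languages the hypothesis is automatic, exactly as you note.
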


	\begin{theorem}[Kechris, Pestov, Todorcevic \cite{KPT}]
		Suppose $\K$ is a \Fra\ structure, and let $\KK = \Age(\K)$. The following are equivalent:
		\begin{itemize}
			\item $\KK$ is a Ramsey class of rigid structures. 
			16gLaJKcDHVHbccbo9ayPsopB6gA1pZB888m8q6g2T8x6kBm
		\end{itemize}
	\end{theorem}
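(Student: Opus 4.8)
The plan is to prove the equivalence between $\KK$ being a Ramsey class of rigid structures and the extreme amenability of $G = \Aut(\K)$ by routing everything through the \emph{greatest ambit} $\mathcal{S}(G)$ of $G$, the Samuel compactification of $G$ with respect to its right uniformity, using that $G$ is a non-Archimedean Polish group whose pointwise stabilizers $G_{\A}$ of finite substructures $\A \subseteq \K$ form a neighbourhood basis of the identity. The first step is to record the standard dynamical reformulation: $G$ is extremely amenable if and only if $\mathcal{S}(G)$ carries a $G$-fixed point. Because every bounded right-uniformly continuous function on $G$ is left-invariant under some $G_{\A}$, such functions factor through the discrete left-coset space $G/G_{\A}$, which by ultrahomogeneity is canonically identified with the set $\emb(\A,\K)$ of embeddings of $\A$ into $\K$. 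Thus the analytic content of $\mathcal{S}(G)$ is encoded by finite colourings of embeddings of finite substructures, and the search for a fixed point becomes a statement about stabilising such colourings.

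For the implication that Ramsey plus rigidity yields extreme amenability, I would verify the oscillation-stability criterion for the existence of a fixed point in $\mathcal{S}(G)$: given a bounded right-uniformly continuous $f \colon G \to [0,1]$, a finite $F \subseteq G$, and $\epsilon > 0$, one must produce $g \in G$ with $|f(hg) - f(g)| < \epsilon$ for every $h \in F$. After discretising $f$ into finitely many colours it factors through $\emb(\A,\K)$ for a single finite $\A$, and the finite set $F$ can be absorbed by passing to a larger $\B \supseteq \A$ whose copies record the relevant translates. Applying the Ramsey degree $1$ hypothesis to the induced colouring of $\binom{\C}{\A}$ for a suitable finite $\C$ then furnishes a copy of $\B$ on which the colour is constant, which is exactly the desired $g$; rigidity is precisely what forces the degree to equal $1$ rather than merely be bounded, so that a single colour—and hence, in the inverse limit, a genuine fixed point rather than an approximate one—is obtained.

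For the converse, suppose $G$ is extremely amenable. To see that every member of $\KK$ is rigid, let $G$ act continuously by permutation on the compact space $\mathrm{LO}(\K)$ of linear orderings of the underlying set of $\K$; extreme amenability yields a fixed point, that is, a $G$-invariant linear order $<$. By ultrahomogeneity every automorphism of a finite substructure $\A \subseteq \K$ extends to an element of $G$, which must preserve $<$, and since the identity is the only order-preserving permutation of a finite linearly ordered set, $\A$ is rigid. For the Ramsey property, fix $\A \subseteq \B$ in $\KK$ and $k \in \omega$, and encode a colouring of the copies of $\A$ as a right-uniformly continuous $\tilde c \colon G \to k$ through a reference copy $\A_0$. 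Extending $\tilde c$ to $\mathcal{S}(G)$ and evaluating at a $G$-fixed point produces a single colour simultaneously attained on all $\A$-subcopies of some copy of $\B$; a compactness argument over the locally finite class $\KK$ then upgrades this to the required finite witness $\C$, giving Ramsey degree $1$.

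The step I expect to be the main obstacle is the Ramsey $\Rightarrow$ extreme amenability direction, specifically the passage from the finitary partition statement to an honest fixed point of $\mathcal{S}(G)$. The delicate points are setting up the identification of right-uniformly continuous functions with colourings of $\emb(\A,\K)$ so that the oscillation/inverse-limit argument lands a $G$-fixed point rather than merely an approximately invariant one, and the bookkeeping by which rigidity collapses the distinction between embeddings and copies. Without rigidity the degree-$1$ hypothesis is unavailable and the construction only bounds the number of colours, yielding amenability-type information rather than a true fixed point—so this is exactly where both hypotheses of the theorem are consumed.
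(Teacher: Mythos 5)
This theorem is quoted background in the paper---it is attributed to Kechris--Pestov--Todorcevic \cite{KPT} and never proved there---so there is no in-paper argument to compare against; you also correctly read the garbled second bullet of the statement as the extreme amenability of $\Aut(\K)$. Your outline is essentially the standard KPT proof (finite oscillation stability of right-uniformly continuous functions, which factor through $G/G_{\A} \cong \emb(\A,\K)$, for the Ramsey-plus-rigidity $\Rightarrow$ extreme amenability direction; an invariant linear order for rigidity and a fixed point in the space of colourings, plus compactness, for the converse), and it is correct in its essentials, including the key observation that rigidity is what collapses embeddings to copies so that the copy-Ramsey hypothesis yields monochromaticity rather than a bound of $\vert\Aut(\A)\vert$ colours.
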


	A consequence of the above has been a stream of research equating dynamical properties to Ramsey theoretic phenomena \cite{NVT,Zucker1,Zucker2,Zucker3}. This connection between finite combinatorics and topological dynamics is not inherently new, with topological variants of both Van der Waerden's theorem, and Szemer\'edi's theorem arising in the late 70's and early 80's \cite{Furst,ErgodicSz}. What was novel however, was how many examples of extremely amenable automorphism groups existed. \\
	\\
	We will close this subsection with a brief discussion on \textit{big Ramsey degrees}. Big Ramsey degrees, as the name may suggest, are vastly more challenging to compute than their small counterparts. Much of the modern focus is on the computation of big Ramsey degrees of \Fra\ structures \cite{Dobrinen,ExactBRD,Sauer,Zucker2,Zucker3}.

	\begin{definition}
		Suppose $\K$ is an infinite structure and let $\A\in \Age(\K)$. The \textit{big Ramsey degree} of $\A$ in $\K$, if it exists, is the smallest integer $t$ such that for every $k\in \omega$ and $c: \binom{\K}{\A}\rightarrow k$, there is $\K^\prime \in \binom{\K}{\K}$ such that $|c[\binom{\K^\prime}{\K}]| \leq t$. We denote this integer $t(\A,\K)$.
	\end{definition}

	A question posed by Ma\v{s}ulovi\'c at the AMS special session on Ramsey theory of infinite structures was whether or not big Ramsey degrees are reasonably productive. This question is motivated by the small setting, where the Ramsey degree of a product is precisely the product of Ramsey degrees \cite{ProdRamsey}. It is clear that this fails in the big setting, but it was unclear whether or not a product Ramsey degree could be reasonably computed. In Section 3.1, we will give the precise necessary and sufficient conditions required for a product Ramsey degree (which we denote \textit{box Ramsey degree}) to exist and give an explicit computation.

	\subsection{Brief Primer on Dynamics}
	We will give a fairly brief primer on dynamics. A majority of the paper will be devoted to combinatorial methods of problem solving, though dynamical consequences are extremely important to structural Ramsey theory, and this always deserves recognition. This subsection will mostly be for non-professionals in the field, and can mostly be skipped. Material here will be at its most relevant in Section 2.3 and Section 3.4.

	\begin{definition}
		We let $\mathbf{S}_\infty$ denote the group of permutations $f:\omega \rightarrow \omega$ endowed with the topology of pointwise convergence. 
	\end{definition}

	The above space is \textit{Polish}, and is foundational to the study of \Fra\ structures. This is due to the one-to-one correspondence between closed subgroups of $\mathbf{S}_\infty$ and \Fra\ structures. For more on the model theoretic or descriptive set theoretic background, the author recommends the following texts by Hodges \cite{Hodges} and Kechris \cite{ClassDesc} respectively.

	\begin{fact}
		Given a closed $G\leq \mathbf{S}_\infty$, there is a relational structure $M=(\omega, \{R_i\}_{i\in I})$ such that  $\Aut(M) =G$ and $M$ is \Fra. 
	\end{fact}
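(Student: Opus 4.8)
The plan is to build $M$ directly from the orbit structure of $G$ on finite tuples, which is the canonical structure attached to any closed permutation group. For each $n \geq 1$, let $G$ act coordinatewise on the set of injective $n$-tuples from $\omega$, and introduce one $n$-ary relation symbol $R_O$ for every orbit $O$ of this action, interpreting $R_O^M$ as exactly the set of tuples belonging to $O$. This produces a relational structure $M = (\omega, \{R_O\}_O)$ in a (possibly infinite) relational language whose defining feature is that two injective $n$-tuples satisfy the same $n$-ary relations if and only if they lie in the same $G$-orbit. Note that unary relations (the orbits of $G$ on points) are permitted here, which is harmless for this statement.

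First I would verify the easy inclusion $G \leq \Aut(M)$. Since every $R_O^M$ is by construction a single $G$-orbit, each $g \in G$ maps each relation onto itself, and hence is an automorphism of $M$.

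Next I would establish ultrahomogeneity, which is where the orbit-indexed relations pay off. Given finite substructures with an isomorphism $f$ between them, enumerate the domain of $f$ as an injective tuple $\bar{a} = (a_1, \dots, a_n)$ and set $\bar{b} = (f(a_1), \dots, f(a_n))$. Because $f$ preserves every relation, and $\bar{a}$ lies in the relation $R_O$ indexing its own orbit, the image $\bar{b}$ lies in $R_O$ as well; hence $\bar{a}$ and $\bar{b}$ share a $G$-orbit, so some $g \in G$ sends $\bar{a}$ to $\bar{b}$. As $g \in G \leq \Aut(M)$, this exhibits an automorphism extending $f$, proving $M$ ultrahomogeneous. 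Since $M$ is countable, Fra\"iss\'e's theorem as stated above then certifies that $M$ is a \Fra\ structure.

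The hard part, or at least the only step genuinely using the hypothesis, will be the reverse inclusion $\Aut(M) \leq G$, where the closedness of $G$ in the topology of pointwise convergence is essential. Given $h \in \Aut(M)$ and any finite $F = \{c_1, \dots, c_n\} \subseteq \omega$, the injective tuple $(c_1, \dots, c_n)$ and its image under $h$ satisfy the same relations, hence lie in a common $G$-orbit, so there is $g \in G$ agreeing with $h$ on $F$. Thus $h$ lies in the closure of $G$ in $\mathbf{S}_\infty$, and since $G$ is closed we conclude $h \in G$. Combining the two inclusions yields $\Aut(M) = G$, completing the construction.
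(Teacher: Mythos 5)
Your proposal is correct. The paper states this as a fact without proof, deferring to Hodges for the correspondence, and your construction --- introducing one $n$-ary relation per $G$-orbit of injective $n$-tuples (the canonical structure), then using closedness of $G$ in $\mathbf{S}_\infty$ for the inclusion $\Aut(M)\leq G$ --- is precisely the standard argument from that reference, carried out with all the key steps (both inclusions and ultrahomogeneity) in place.
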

	The automorphism group of any relational structure where relations have finite arity can be identified with a closed subgroup of $\mathbf{S}_\infty$, hence \Fra\ structures and subgroups of $\mathbf{S}_\infty$ are inextricably linked. For more on this correspondence, see Hodges \cite{Hodges}. The key ingredient in this correspondence is \textit{type}.

	\begin{definition}
		Suppose $G\leq \mathbf{S}_\infty$ is closed. A \textit{type} $\tau \subseteq [\omega]^n $ is a $G$-orbit. We call $\tau$ an $n$-\textit{type}. 
	\end{definition}

	\begin{definition}
		We say $G\leq \mathbf{S}_\infty$ is \textit{oligomorphic} if for every $n\in \omega$, there are only finitely many $n$-types. 
	\end{definition}

	We will predominantly be interested in structures with oligomorphic isomorphism group, or more explicitly, with only finitely many substructures of any designated finite cardinality. Our inference being that this is slightly stronger than the existence of finitely many \textit{traces}, a concept we introduce in Section 3.1.

	\subsection{Universality of $\QQ$}
	
	We finish this section with a small exercise. We will provide a short proof that the rationals are ubiquitous amongst \Fra\ structures with extremely amenable automorphism groups. This has always been implicitly understood, though we will prove it unequivocally. Doing so provides a partial answer to a question posed by Valery Michkine to the author at the Toronto Set Theory seminar.
	
	\begin{theorem}[Kechris-Pestov-Todorcevic]
		Let $G\leq \mathbf{S}_\infty $ be a closed subgroup. If $G$ is extremely amenable, then there is a linear ordering $<_G$ on $\omega$ that is $G$-invariant.
	\end{theorem}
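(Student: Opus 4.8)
The plan is to realize the collection of all linear orderings of $\omega$ as a compact $G$-flow and then invoke extreme amenability to extract a fixed point, which will be exactly a $G$-invariant order. First I would identify a strict linear ordering of $\omega$ with its graph, a subset of $\omega\times\omega$, and hence with a point of the Cantor space $2^{\omega\times\omega}$ (where $2=\{0,1\}$, carrying the product topology). Let $\mathrm{LO}\subseteq 2^{\omega\times\omega}$ denote the set of those points coding a strict linear order.

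Next I would verify that $\mathrm{LO}$ is a nonempty compact space. It is nonempty since the usual order on $\omega$ lies in it. For compactness, note that $2^{\omega\times\omega}$ is compact by Tychonoff, so it suffices to see that $\mathrm{LO}$ is closed. Each axiom of a strict linear order — irreflexivity, the trichotomy that exactly one of $R(m,n)$, $R(n,m)$, $m=n$ holds, and transitivity — is a constraint referring to only finitely many coordinates at a time, hence defines a closed condition in the product topology; thus $\mathrm{LO}$ is an intersection of closed sets and is itself closed, so it is compact and metrizable.

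Then I would set up the action of $G$. Since $G\leq \mathbf{S}_\infty$ acts on $\omega$, it acts diagonally on $\omega\times\omega$ via $g\cdot(m,n)=(g(m),g(n))$, inducing an action on $2^{\omega\times\omega}$ by $(g\cdot R)(m,n)=R(g^{-1}(m),g^{-1}(n))$. This action preserves $\mathrm{LO}$, since the image of a linear order under a bijection is again a linear order. The key point is that the resulting map $G\times\mathrm{LO}\to\mathrm{LO}$ is continuous: this is the standard logic action, and it follows from the topology of pointwise convergence on $G$ together with continuity of inversion. Concretely, to read off $g\cdot R$ on a fixed finite set of coordinates $F\subseteq\omega\times\omega$ one needs $g^{-1}$ only on the finitely many points occurring in $F$, so agreement of group elements and of orders on sufficiently large finite sets forces agreement of the outputs on $F$.

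Finally, because $G$ is extremely amenable and $\mathrm{LO}$ is a nonempty compact $G$-flow, there is a fixed point $<_G\in\mathrm{LO}$. Unwinding $g\cdot<_G\,=\,<_G$ gives $m<_G n \iff g^{-1}(m)<_G g^{-1}(n)$ for all $g\in G$ and all $m,n\in\omega$; replacing $g$ by $g^{-1}$, which is legitimate as $G$ is a group, yields $m<_G n \iff g(m)<_G g(n)$, i.e.\ $<_G$ is $G$-invariant, as required. The only genuinely delicate step is the continuity of the flow; compactness and invariance are bookkeeping, so I expect that verification, though routine, to be where the care is concentrated.
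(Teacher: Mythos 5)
Your proposal is correct and takes essentially the same route as the paper: both realize the set of linear orders on $\omega$ as a compact space on which $G$ acts continuously by the logic action $x\,(g\cdot\preceq)\,y \iff g^{-1}(x)\preceq g^{-1}(y)$, and then invoke extreme amenability to obtain a fixed point, which is the desired $G$-invariant order. The paper states compactness, continuity, and the fixed-point-to-invariance translation without proof, and your write-up merely supplies those routine verifications in full.
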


	\begin{proof}
		The space $\textnormal{LO}(\omega) $ of linear orders on $\omega$ is compact. The group $G$ naturally acts on $\textnormal{LO}(\omega)$ via the logic action $g\cdot \preceq$, where $x (g\cdot\preceq) y$ if and only if $g^{-1}(x) \preceq g^{-1}(y)$. This action is continuous, and consequently, there is a $<_G \in \textnormal{LO}(\omega)$ which is $G$-fixed. That is, for every $g\in G$, if $x<_G y$, then $g(x) <_G g(y)$. 
	\end{proof}

	For more details on the above, see \cite{KPT}.The above is simply a fragment of the entire Kechris-Pestov-Todorcevic correspondence, though it will be sufficient for our purposes.

	\begin{definition}
		Given a closed extremely amenable $G\leq \mathbf{S}_\infty$, we let $<_G$ denote a $G$ invariant order on $\omega$. 
	\end{definition}
	
	Jahel has provided excellent work in his thesis, whereby an analysis of linear orders and $2$-types could be used to deduce intriguing dynamical properties \cite{Jahel}. We will also take this approach to prove a more combinatorial result. 
	
	\begin{lemma}\label{typepushing}
		Fix a $G\leq \mathbf{S}_\infty$ closed and extremely amenable, and suppose $G$ acts transitively on $\omega$. Fix $\tau\subseteq [\omega]^2$ a $2$-type. For all $x\in \omega$, there is a $y\in \omega$ such that $x<_G y$ and $\{x,y\} \in \tau$. 
	\end{lemma}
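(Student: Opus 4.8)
The plan is to exploit the fact that both the order $<_G$ and the type $\tau$ are preserved by all of $G$, so that a single witnessing pair can be transported anywhere by transitivity. First I would note that $\tau$, being a $G$-orbit on $[\omega]^2$, is nonempty, so I may fix a representative pair $\{a,b\} \in \tau$. Since $<_G$ linearly orders $\omega$ and $a \neq b$, exactly one of $a <_G b$ or $b <_G a$ holds; relabelling if necessary, I assume $a <_G b$.

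Next I would invoke transitivity of the $G$-action on $\omega$: there is $g \in G$ with $g(a) = x$. The crux is that $g$ respects both structures in play. Because $\tau$ is a $G$-orbit it is $G$-invariant, whence $\{g(a), g(b)\} = \{x, g(b)\} \in \tau$. Because $<_G$ is $G$-invariant (as supplied by extreme amenability through the preceding Kechris--Pestov--Todorcevic theorem) and $a <_G b$, applying $g$ gives $x = g(a) <_G g(b)$. Setting $y := g(b)$ then delivers $x <_G y$ together with $\{x,y\} \in \tau$, which is exactly what is wanted.

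The argument is essentially immediate once these invariances are recorded, so I do not expect a genuine obstacle. The one point meriting a moment of care is the choice of which element of the representative pair to map onto $x$: ordering the pair so that $a$ is the $<_G$-smaller element and sending $a \mapsto x$ is precisely what forces $y = g(b)$ to lie $<_G$-above $x$ rather than below it. Note that extreme amenability enters only indirectly here, through its role in producing the $G$-invariant order $<_G$; the active ingredients are transitivity, the $G$-invariance of $\tau$, and the $G$-invariance of $<_G$.
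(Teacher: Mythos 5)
Your proof is correct and uses essentially the same ingredients as the paper: transitivity to transport a representative pair of $\tau$, together with the $G$-invariance of both $\tau$ and $<_G$. Your execution is in fact slightly cleaner --- by ordering the representative pair first and mapping its $<_G$-smaller element onto $x$, you collapse into one step what the paper does with a case split and a second group element $h$ (whose conclusion, incidentally, contains a typo: the witness there should be $h(x)$, not $h(y)$).
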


	\begin{proof}
		Fix $\tau$ and $x\in \omega$. Take $F\in \tau$. Since $G$ acts transitively on $\omega$, there is a $g\in G$ such that $g(x) \in F$. It follows that $x\in g^{-1}(F) \in \tau$. So, suppose $\{x,y\} \in \tau$. If $x<_G y$, we are done. Suppose instead that $y<_G x$. Find $h\in G$ such that $h(y) =x$. It follows that $x<_G h(y)$, and $\{x,h(y)\} = h(\{y,x \}) \in \tau$ as desired. 
	\end{proof}

	\begin{theorem}
		Fix a $G\leq \mathbf{S}_\infty$ closed and extremely amenable, and suppose $G$ acts transitively on $\omega$. The ordering $<_G$ is a dense linear order without endpoints. 
	\end{theorem}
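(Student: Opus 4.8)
The plan is to show $<_G$ is a dense linear order without endpoints by exploiting the transitivity of $G$ together with Lemma~\ref{typepushing}. Since $<_G$ is already a linear order by construction (it is a $G$-invariant element of $\textnormal{LO}(\omega)$), the work is entirely in establishing \emph{density} and the \emph{absence of endpoints}. By the back-and-forth characterization, $(\omega, <_G)$ will then be isomorphic to $(\QQ, <)$, which is the desired payoff for this subsection.

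First I would dispose of the endpoints. Suppose toward a contradiction that $x\in \omega$ is a $<_G$-maximum, so $y <_G x$ for all $y \neq x$. Pick any $z \neq x$ and consider the $2$-type $\tau$ containing $\{z,x\}$. Since $G$ acts transitively, by Lemma~\ref{typepushing} applied to $\tau$ and to the point $x$, there is some $w$ with $x <_G w$ and $\{x,w\}\in\tau$; this directly contradicts $x$ being a maximum. The symmetric argument (swapping the roles, or applying the lemma to a suitably chosen type) rules out a minimum. The only mild subtlety is that the lemma produces a point \emph{above} a given $x$ inside a prescribed type, so I must make sure I invoke it with a type that is genuinely realized by a pair involving $x$; transitivity guarantees every $2$-type is realized at $x$ on one side or the other, which is exactly the content extracted in the lemma's proof.

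Next, density. Fix $a <_G b$ and let $\tau$ be the $2$-type of the pair $\{a,b\}$. The idea is to find a point strictly between them, and the natural mechanism is again Lemma~\ref{typepushing}, but now I need to move \emph{within} the interval rather than merely upward. I would use transitivity to transport the configuration: choose $g\in G$ with $g(a)$ equal to some convenient point, track where $b$ lands, and use $G$-invariance of $<_G$ to preserve the order relation $a<_G b$. Concretely, if the type $\tau$ can be realized by pairs at different "scales," one produces via a group element a pair $\{a,c\}$ of type $\tau$ with $a <_G c <_G b$, witnessing density.

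The hard part will be the density step, and specifically ruling out the possibility that the order is discrete, i.e.\ that $a$ and $b$ are $<_G$-consecutive with nothing between them. The engine must be that extreme amenability forces the invariant order to be "homogeneous" enough that no gap can be canonically distinguished: if $a<_G b$ were a gap, transitivity would propagate this gap uniformly, but then the structure of $2$-types interacts with $<_G$ rigidly, and I expect a contradiction with the freedom guaranteed by Lemma~\ref{typepushing}. Pinning down exactly which type to push, and confirming that the pushed point lands strictly between $a$ and $b$ rather than outside the interval, is where the real care is needed; everything else reduces to bookkeeping with the logic action and $G$-invariance of $<_G$.
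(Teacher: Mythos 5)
Your treatment of the endpoints is essentially fine: for a putative maximum $x$, Lemma~\ref{typepushing} indeed produces $w$ with $x<_G w$, a contradiction. (For the minimum, note the lemma only pushes \emph{upward}, so "the symmetric argument" is not literally available; instead argue directly as the paper does: take $y\neq x$, pick $g\in G$ with $g(y)=x$, and apply $g$ to $y<_G x$ to get $g(x)<_G x$.) The genuine gap is exactly where you flagged it: the density step is never carried out, and --- more seriously --- it \emph{cannot} be carried out with the toolkit you allot yourself. Every ingredient you propose to use (transitivity of $G$, $G$-invariance of $<_G$, and the conclusion of Lemma~\ref{typepushing}) is also satisfied by the group of shifts $n\mapsto n+k$ acting on $(\ZZ,<)$: that action is transitive, preserves the order, and for every $2$-type (pairs at a fixed distance $k$) and every point $n$, the pair $\{n,n+k\}$ realizes the type above $n$. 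Yet $(\ZZ,<)$ is discrete. So no amount of "transporting the configuration" by group elements or realizing $\tau$ "at different scales" can force a point strictly inside a gap; the shift example shows that any derivation of density from those three facts alone is formally invalid. (The shift group is of course not extremely amenable, so this does not contradict the theorem; it shows that extreme amenability must be invoked a second time, beyond its use in producing $<_G$ and the lemma.)

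For what it is worth, the paper's own density proof follows precisely the line you sketch --- it obtains $z<_G z'$ with $\{x,z\},\{z,z'\}\in\tau$, takes $g$ with $g(z)=x$ and $g(z')=z$, and then asserts $x<_G g(z)<_G z$ --- and that last assertion is a non sequitur (indeed $g(z)=x$), so your instinct that "confirming the pushed point lands strictly between" is the real difficulty was exactly right; you have in effect reproduced the paper's approach together with its defect. A correct argument must use extreme amenability directly. For instance: if some pair $x<_G z$ were consecutive, then by transitivity every point has an immediate successor, and since the image of the successor map $\sigma$ is nonempty and $G$-invariant, every point also has an immediate predecessor; thus $\sigma$ is a bijection of $\omega$ commuting with the $G$-action. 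The set $X=\{f:\omega\rightarrow \ZZ/2\ZZ \ :\ f(\sigma(x))=f(x)+1 \text{ for all } x\in\omega\}$ is a nonempty closed (hence compact) subspace of $(\ZZ/2\ZZ)^{\omega}$, invariant under the logic action of $G$. Extreme amenability yields a fixed point $f\in X$, which by transitivity of $G$ must be constant, contradicting $f(\sigma(x))=f(x)+1$. Hence no consecutive pair exists and $<_G$ is dense.
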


	\begin{proof}
		Suppose $x\in \omega$ is $<_G$ maximal, and take $y\in \omega$. Since $G$ acts transitively on $\omega$, there is a $g\in G$ such that $g(y)=x$. Since $y<_Gx$, $x<_G g(x)$ which contradicts the maximality of $x$. Showing $<_G$ is unbounded below is symmetric. We now show density. Take $x,z\in \omega$ with $x<_G z$. Let $\tau$ be the type of $\{x,z\}$. By Lemma \ref{typepushing}, there is a $ z^\prime$ such that $z<_G z^\prime$ and $\{z,z^\prime\} \in \tau $. Since $\{x,z\}, \{z,z^\prime\} \in \tau$, we can find a $g\in G$ for which $g(z)=x$ and $g(z^\prime)=z$. Since $x<_Gz <_G z^\prime$, it follows that $x<_G g(z)<_G z$. Hence, $<_G$ is dense. See Figure 1. 
		
		\begin{figure}[htp]
			\centering 
			\begin{tikzpicture}[main/.style = {fill, circle}] 
				\coordinate (X) at (-3,0);
				\coordinate (Z) at (0,0);
				\coordinate (Zprime) at (3,0);
				\node [label=$x$] at (X) {$\bullet$};
				\node [label=$z$] at (Z) {$\bullet$};
				\node [label=$z^\prime$] at (Zprime) {$\bullet$};

				\draw (-5,0) -- (5,0);

				\coordinate (X) at (-3,-2);
				\coordinate (Z) at (-1.5,-2);
				\coordinate (Zprime) at (0,-2);
				\node [label=$x$] at (X) {$\bullet$};
				\node [label=$g(z)$] at (Z) {$\bullet$};
				\node [label=$z$] at (Zprime) {$\bullet$};

				\draw (-5,-2) -- (5,-2);

			\end{tikzpicture} 
			\caption{Density achieved by squishing $z$ with automorphism.}
		\end{figure}
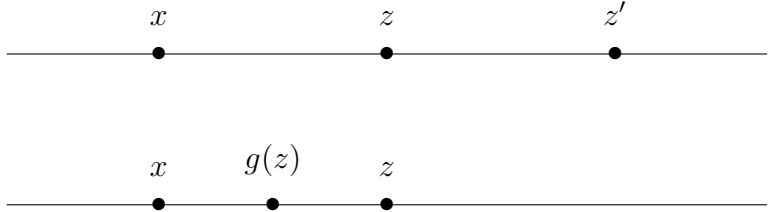
		
	\end{proof}

	\begin{definition}
		Let $(L,<_L)$ be a linear order. Suppose $\mathcal{I}$ is a family of disjoint convex subsets of $L$. The \textit{block order on} $\mathcal{I}$ is the order given by $I<J$ if and only if $\forall x\in I$ $\forall y\in J$, $x<_L y$. 
	\end{definition}

	\begin{corollary}
		Fix a $G\leq \mathbf{S}_\infty$ closed and extremely amenable, and suppose $G$ acts transitively on $\omega$. It follows that $G\leq \Aut(\QQ) $.
	\end{corollary}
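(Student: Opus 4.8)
The plan is to read off this corollary directly from the preceding theorem together with Cantor's classical characterization of $\QQ$. The theorem just proved shows that, under the standing hypotheses (that $G$ is closed, extremely amenable, and acts transitively on $\omega$), the $G$-invariant order $<_G$ makes $(\omega,<_G)$ a dense linear order without endpoints. Since $\omega$ is countable, I would immediately invoke the back-and-forth argument of Cantor: any two countable dense linear orders without endpoints are order-isomorphic, and $(\QQ,<)$ is the canonical representative, i.e.\ the \Fra\ limit of the class of finite linear orders. Thus there is an order isomorphism $\phi:(\omega,<_G)\to(\QQ,<)$, realizing $(\omega,<_G)$ as a copy of $\QQ$.

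The second and final step is to recognize $G$ inside the automorphism group of this order. By the defining property of $<_G$, for every $g\in G$ and every $x,y\in\omega$ we have $x<_G y \iff g(x)<_G g(y)$; that is, each $g$ is precisely an automorphism of the relational structure $(\omega,<_G)$. Hence $G\leq \Aut(\omega,<_G)$. Transporting this inclusion across the isomorphism $\phi$ (conjugating each $g$ to $\phi g \phi^{-1}$) identifies $G$ with a closed subgroup of $\Aut(\QQ)$, which is the desired conclusion.

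There is no serious obstacle here, as the corollary is essentially a repackaging of the theorem via Cantor's theorem; the only point requiring a little care is the interpretation of the inclusion $G\leq\Aut(\QQ)$, which should be understood up to the conjugation induced by $\phi$, since both $G$ and $\Aut(\QQ)$ are being viewed as subgroups of $\mathbf{S}_\infty$ and the identification of the underlying set of $\QQ$ with $\omega$ is only canonical up to order isomorphism. I would also emphasize that transitivity of the action is exactly what was needed upstream to guarantee density and the absence of endpoints via Lemma \ref{typepushing}, so the hypotheses are used in full but entirely through the previous theorem rather than again here.
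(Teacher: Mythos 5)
Your proof is correct and follows essentially the same route as the paper: invoke the preceding theorem to see $(\omega,<_G)$ is a countable dense linear order without endpoints, identify it with $(\QQ,<)$ via Cantor's theorem, and observe that $G$-invariance of $<_G$ means $G$ sits inside $\Aut(\QQ)$ after this identification. Your extra care about the conjugation by the order isomorphism $\phi$ is exactly what the paper's phrase ``without loss of generality, we may identify $(\omega,<_G)$ with $(\QQ,<)$'' sweeps under the rug.
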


	\begin{proof}
		Since $(\omega, <_G)$ is isomorphic to $\QQ$, without loss of generality, we may identify $(\omega, <_G)$ with $(\QQ,<)$. Since $G$ respects the ordering of the rationals, $G\leq \Aut(\QQ)$.  
	\end{proof}

	\begin{theorem}
		Fix a $G\leq \mathbf{S}_\infty$ closed and extremely amenable, and suppose there are $\kappa$-many $1$-types, for some countable $\kappa$. Let $\omega = \bigcup\limits_{i\in \kappa} P_i$ be the partition associated to type. $G\leq \Aut(\QQ) $.
	\end{theorem}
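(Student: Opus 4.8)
The plan is to show that $(\omega,<_G)$ is a countable dense linear order without endpoints; Cantor's back-and-forth theorem then yields an order isomorphism $(\omega,<_G)\cong(\QQ,<)$, and since every element of $G$ preserves $<_G$, transporting along this isomorphism realises $G$ as a subgroup of $\Aut(\QQ)$. Countability is immediate, so the entire content is density together with the absence of endpoints, now complicated by the failure of transitivity: the two members of a pair may lie in different blocks $P_i$, $P_j$.

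The first step is to upgrade the transitive results to each block. Fix $i\in\kappa$. Since $P_i$ is a single $G$-orbit it is $G$-invariant, so restriction gives a continuous homomorphism $G\to\mathbf{S}_\infty$ whose image acts transitively on $P_i\cong\omega$; the closure $H_i$ of this image is closed, acts transitively on $P_i$, preserves $<_G\!\upharpoonright\!P_i$, and is extremely amenable, because extreme amenability passes to closures of continuous homomorphic images (a fixed point for the pulled-back $G$-action is fixed by the dense image, hence by $H_i$). Applying the transitive theorem and its corollary to $H_i$ shows that each block $P_i$ with the induced order is either a single point (finite orbit) or a copy of $\QQ$; in particular no block has an internal endpoint or an internal jump.

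The second step is to interleave the blocks. I would first record a block-sensitive version of Lemma \ref{typepushing}: for a $2$-type $\tau$ the $1$-types of the $<_G$-smaller and $<_G$-larger coordinates of pairs in $\tau$ are well defined (as $<_G$ is $G$-invariant), and for every $x$ in the smaller coordinate's block there is $y>_G x$ with $\{x,y\}\in\tau$, with the dual statement from above; the proof is the orbit-transitivity argument of Lemma \ref{typepushing} carried out inside the relevant block. Given $x<_G z$ I would take $\tau$ to be the type of $\{x,z\}$ and manufacture a point strictly between them, mimicking the ``squishing'' construction of the density proof above while tracking which block each manufactured point inhabits. Endpoints are handled similarly: a $<_G$-maximal element $x$ must be a fixed point of $G$ (any $g\in G$ preserves $<_G$ and is a bijection of $\omega$, so $g(x)$ is again the maximum and hence equals $x$), and I would contradict this by producing, via type-pushing, a pair placing $x$ below some other element.

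The hard part is exactly this interleaving. Within a single block transitivity forces density and endpoint-freeness, but across two distinct blocks the only available data is the finite list of $2$-types joining them, and extreme amenability does not on its own prevent a $2$-type all of whose pairs are \emph{consecutive} in $<_G$, nor a fixed point forced to sit at the very top or bottom; these are precisely the configurations obstructing a jump-free, endpoint-free global order. Consequently the main obstacle is to rule out cross-block $2$-types consisting of jumps and to show that no block is cofinal or coinitial in $\omega$. I expect this to require more than the bare type-pushing lemma: either an additional hypothesis ensuring the blocks are densely interleaved, or a passage to the finite condensation of $<_G$, on whose classes $G$ acts with a genuinely dense quotient order, before the back-and-forth argument is applied.
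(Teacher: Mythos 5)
Your reduction of the theorem to the claim that $(\omega,<_G)$ is a countable dense linear order without endpoints cannot be completed, because that claim is false: the two obstructions you flag in your final paragraph (cross-block jumps, and a fixed point sitting at an end) genuinely occur, so no strengthening of the type-pushing lemma will remove them. For jumps, let $G$ be the diagonal copy of $\Aut(\QQ)$ acting on $\omega=\QQ\times\{0,1\}$ by $g\cdot(q,i)=(g(q),i)$. This is exactly the automorphism group of the structure carrying the lexicographic order together with a unary predicate for $\QQ\times\{0\}$ (an automorphism preserves immediate successors, hence is determined by its action on the first copy), so $G$ is closed; it is extremely amenable because restriction to the first copy is an isomorphism of topological groups onto $\Aut(\QQ)$ and extreme amenability is a property of the topological group, not of the particular action. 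Both orbits are infinite, yet the lexicographic order is a $G$-invariant order in which every $(q,0)$ has immediate successor $(q,1)$; so $(\omega,<_G)$ can fail to be dense even when every block is a copy of $\QQ$. For endpoints, let $G$ be the stabilizer of $0$ in $\Aut(\QQ)$, which is topologically isomorphic to $\Aut(\QQ)\times\Aut(\QQ)$ and hence extremely amenable; the order placing $0$ below every other rational and otherwise agreeing with the usual one is $G$-invariant and has a least element, and since $0$ is a global fixed point no type-pushing can dislodge it. In both examples the bad order is a legitimate value of $<_G$ — the fixed-point argument gives no control over which invariant order is produced — so your second step is not merely difficult but impossible.

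The way out, and what the paper actually does, is to abandon the goal $(\omega,<_G)\cong\QQ$ and instead construct an order embedding $f:(\omega,<_G)\to\QQ$ along which the $G$-action extends to all of $\QQ$. The paper runs a Laver-style condensation: each infinite orbit $P_i$ is shown, by essentially your within-block density argument, to lie inside a single convex $\sim$-class that is a countable dense order without endpoints; these classes are unions of orbits, hence setwise $G$-invariant; they are mapped onto pairwise disjoint open rational intervals compatibly with the block order, and each $g\in G$ is then extended by the identity on the rest of $\QQ$. In fact something even softer suffices: for any countable linear order $L$, the order $\sum_{l\in L}\QQ$ obtained by replacing each point of $L$ with a copy of $\QQ$ is a countable dense order without endpoints, hence isomorphic to $\QQ$, and $g\mapsto\hat g$ with $\hat g(l,q)=(g(l),q)$ is a topological group embedding of $\Aut(L)$ into $\Aut(\QQ)$; applying this with $L=(\omega,<_G)$ and using $G\leq\Aut(\omega,<_G)$ proves the theorem with no orbit analysis at all. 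Your closing suggestion of passing to a condensation is the right instinct, but the target of that passage should not be density of the quotient (the point-stabilizer example above shows the condensed order can still have an endpoint); it should be an embedding into $\QQ$ along which automorphisms extend, with the identity filling in the gaps.
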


	\begin{proof}
		Since $G$ is extremely amenable, $|P_i|$ is either infinite or a singleton for all $i\in \kappa$. We may suppose they are all infinite without loss of generality. Let us consider a variant of a condensation argument penned by Laver \cite{Laver}. We construct an equivalence relation $\sim$ on $\omega$ defined by the following criterion:
		\begin{itemize}
			\item For all $x,y\in \omega$, if $x\sim y$, then $y\sim x$.
			16gLaJKcDHVHbccbo9ayPsopB6gA1pZB888m8q6g2T8x6kBm
			\item For all $x,y \in \omega$, if $\{b\in \omega : x <_G b <_G y\}$ is a dense linear order, then $x\sim y$.
		\end{itemize}
		It is easy to verify that $\sim$ is an equivalence relation, and the $\sim$ equivalence classes are convex eg. if $x\sim y$ and $x<_G b <_G y$, then $x\sim b$. \\
		\\
		\textbf{Claim:} For all $i\in \kappa$, $P_i$ belongs to exactly one $\sim$-class. Fix $i\in \kappa$ and take $x,y \in P_i$. Suppose that $x<_G y$. By Lemma \ref{typepushing}, and since $G$ acts transitively on $P_i$, there is a $z\in P_i$ such that $y<_G z$ and the pairs $\{x,y\}$ and $ \{y,z\}$ have the same type. Letting $g\in G$ be such that $g(y)=x$ and $g(z)=y$, we have that since $x<_G y<_G z$, $x<_G g(y) <_G y$ as in the previous theorem. Thus, $g(y)$ is the witness to density of the interval $\{b\in P_i : x<_G b <_G y \}$. It follows then that $x\sim y$, and hence, every $x,y\in P_i$ is in the same $\sim$ class. \\
		\\
		Our previous claim has shown that the partition $\{P_i : i\in \kappa \} $ is finer than $\omega/\sim$. Hence, classes in $\omega/\sim$ are $G$-invariant dense linear orders without endpoints. Let $\mathcal{I}$ be a partition of $\QQ$ into open bounded intervals. This partition inherits a natural linear order from $\QQ$, and in fact is isomorphic to $\QQ$, with the block order. Since $\QQ$ is universal over countable linear order, we can find $\mathcal{J} \subseteq \mathcal{I} $ and an indexing $\{L_I: I\in \mathcal{J}  \}$ of $\omega/\sim$ with the property that for all $I<_\mathcal{I} J$, if $x\in L_I$ and $y\in L_J $, $x<_G y$. Since each $L_J \in \omega/\sim$ and $J\in \mathcal{J} $ is a dense linear order without endpoints, there is an embedding $f: (\omega,<_G) \rightarrow \QQ$ with the property that $f[L_J] = J$. We know that $G\leq \Aut(\omega,<_G) $, and $f$ witnesses a group isomorphism from $\Aut(\omega,<_G) $ to the subgroup $\{g\in \Aut(\QQ) : \forall I \in \mathcal{I}\setminus \mathcal{J}, g\upharpoonright I = \text{id}_I  \} \leq \Aut(\QQ)$. Thus, $G\leq \Aut(\QQ)$.
	\end{proof}

	\section{Main Results}

	\subsection{Box Ramsey Degrees}

	For this section, fix an infinite (not necessarily countable nor ultrahomogeneous) structure $\K$. We discuss a multidimensional Ramsey property we call the \textit{box Ramsey degree}. A key part of computing the combinatorial characteristic will be projecting the problem to one of a lower dimension. We aptly call our projection operation the \textit{trace}.

	\begin{definition}
		Fix $d\in \omega$, $\A_1,...,\A_d \in \textnormal{Age}(\K)$. Take $(x_1,...,x_d )\in \ \prod\limits_{i=1}^d \binom{\K}{\A_i} $. We let $\tr(x_1,...,x_d) =\bigcup\limits_{i=1}^d x_i $.
	\end{definition}

	\begin{definition}
		Fix $d\in \omega$, $\A_1,...,\A_d \in \textnormal{Age}(\K)$. Take $(x_1,...,x_d ),(y_1,...,y_d) \in  \prod\limits_{i=1}^d \binom{\K}{\A_i} $. We say $(x_1,...,x_d)\simeq (y_1,...,y_d)$ if there is a partial automorphism $f:\K \rightarrow \K$ such that $\textnormal{dom}(f)=\tr(x_1,...,x_d) $, $ \textnormal{ran}(f) =\tr(y_1,...,y_d) $, and $f[x_i] = y_i $. 
	\end{definition}

	In the case that $\K$ is ultrahomogeneous, $\simeq$-equivalence can be witnessed by a full automorphism opposed to a partial automorphism. It can be easily verified that $\simeq$ is an equivalence relation. The relation $\simeq$ is a generalized version of the relation found in \cite{CanPart}. We can iterate a more general form of the Laflamme, Sauer, Vuksanovic argument to deduce the existence of \textit{box Ramsey degrees}, which we can now define.

	\begin{definition}
		Given $\A_1,...,\A_d \in \textnormal{Age}(\K) $, we let $t_\square((\A_1,...,\A_d), \K)$ denote the smallest integer $t$, if it exists, such that for every colouring \\$c: \prod\limits_{i=1}^d \binom{\K}{\A_i} \rightarrow k$, there is a $\K^\prime \in \binom{\K}{\K}$ such that $\vert c[ \prod\limits_{i=1}^d \binom{\K^\prime}{\A_i}] \vert \leq t$. We call $t$ the \textit{box Ramsey degree} of the tuple $(\A_1,...,\A_d)$. 
	\end{definition}

	We have chosen to call this the \textit{box} Ramsey degree because we are only allowed to make one selection of $\K\in \binom{\K}{\K}$. Hence, in the case that we have $\A_1=...=\A_d$, we are trying to minimize the amount of colour classes a colouring $c$ of $\binom{\K}{\A_1}^d$ meets on a subcube $\binom{\K^\prime}{\A_1}^d$. 
	
	\begin{lemma}
		The $\simeq$-classes are \textit{persistent} in $ \prod\limits_{i=1}^d \binom{\K}{\A_i} $ i.e for every $\K^\prime \in \binom{\K}{\K}$ and $P \in  \prod\limits_{i=1}^d \binom{\K}{\A_i}/\simeq$, $P\cap  \prod\limits_{i=1}^d \binom{\K^\prime}{\A_i} \neq\emptyset $.
	\end{lemma}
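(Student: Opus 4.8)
The plan is to establish persistence by pushing an arbitrary representative of the class $P$ into the copy $\K'$ using the very embedding that realizes $\K'$ as a copy of $\K$. Concretely, I would fix any $(y_1,\ldots,y_d) \in P$ and let $\phi:\K \to \K$ be an embedding whose image is $\K'$; such a $\phi$ exists precisely because $\K' \in \binom{\K}{\K}$. The whole argument then amounts to showing that applying $\phi$ coordinatewise lands us back inside $P$ while remaining inside $\K'$.

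First I would observe that applying $\phi$ to each coordinate produces a tuple in the right product. Since $\phi$ is an embedding and each $y_i$ is a copy of $\A_i$, the image $\phi[y_i]$ is again a copy of $\A_i$: embeddings both preserve and reflect all relations, so an induced substructure isomorphic to $\A_i$ is carried to an induced substructure isomorphic to $\A_i$. Moreover $\phi[y_i] \subseteq \K'$, so $\phi[y_i] \in \binom{\K'}{\A_i}$ and hence $(\phi[y_1],\ldots,\phi[y_d]) \in \prod_{i=1}^d \binom{\K'}{\A_i}$.

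The key step is to check that the pushed tuple is $\simeq$-equivalent to the original, so that it too lies in $P$. For the witnessing partial automorphism I would take $f := \phi \upharpoonright \tr(y_1,\ldots,y_d)$, the restriction of $\phi$ to the trace $\bigcup_{i=1}^d y_i$. As a restriction of an embedding, $f$ is an isomorphism onto its image and hence a partial automorphism of $\K$; its domain is exactly $\tr(y_1,\ldots,y_d)$, its range is $\phi[\bigcup_{i=1}^d y_i] = \bigcup_{i=1}^d \phi[y_i] = \tr(\phi[y_1],\ldots,\phi[y_d])$, and $f[y_i] = \phi[y_i]$ for each $i$. This is exactly the data the definition of $\simeq$ demands, so $(y_1,\ldots,y_d) \simeq (\phi[y_1],\ldots,\phi[y_d])$; since $(y_1,\ldots,y_d) \in P$ and $P$ is a $\simeq$-class, we conclude $(\phi[y_1],\ldots,\phi[y_d]) \in P \cap \prod_{i=1}^d \binom{\K'}{\A_i}$, which is the desired nonempty intersection.

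There is no serious obstacle here; the entire content is that the defining embedding of $\K'$ transports any configuration into $\K'$ while preserving its $\simeq$-type, and the restriction of that embedding to the trace supplies the partial automorphism for free. The only point demanding a moment of care is verifying that $f = \phi \upharpoonright \tr(y_1,\ldots,y_d)$ simultaneously meets all three clauses of the definition of $\simeq$ (correct domain, correct range, and $f[y_i]=\phi[y_i]$), which is immediate once one notes that the trace commutes with the coordinatewise application of $\phi$.
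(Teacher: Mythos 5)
Your proof is correct and is essentially the same argument as the paper's: both transport a representative of $P$ into $\K'$ via an isomorphism of its trace, and observe that this isomorphism is exactly the partial automorphism required to stay in the class $P$. The only difference is cosmetic — the paper picks an arbitrary copy $\B' \in \binom{\K'}{\B}$ of the trace and then ``finds a decomposition'' of it, whereas you realize that copy and the witnessing partial automorphism explicitly as the image of the trace under the embedding defining $\K'$, which if anything makes the paper's slightly implicit decomposition step precise.
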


	\begin{proof}
		Fix a $\simeq$ equivalence class $P$, and set $\B = \tr(x_1,...,x_d)$ for some $(x_1,...,x_d) \in P$. Take $\K^\prime \in \binom{\K}{\K}$. It is clear that $\binom{\K^\prime}{\B}$ is nonempty, so take $\B^\prime \in \binom{\K^\prime}{\B}$. We can find a decomposition $\B^\prime = \bigcup\limits_{i=1}^d y_i$ such that $(y_1,...,y_d)\simeq (x_1,...,x_d)$. It follows immediately that $(y_1,...,y_d) \in  \prod\limits_{i=1}^d \binom{\K^\prime}{\A_i} $.
	\end{proof}
	
	Persistent sets are foundational to understanding lowerbounds of Ramsey degrees, as they describe to us which patterns are essential/unavoidable. Hence, we immediately have the following. 
	
	\begin{corollary}
		For every $\A_1,...,\A_d \in \textnormal{Age}(\K)$, we have the lower bound $t_d((\A_1,...,\A_d) ,\K) \geq \vert \prod\limits_{i=1}^d \binom{\K^\prime}{\A_i}/\simeq \vert $. 
	\end{corollary}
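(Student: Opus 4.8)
The plan is to observe that the claimed lower bound is witnessed by a single \emph{canonical colouring}, namely the one that records the $\simeq$-class of each tuple. Let $N = \vert \prod_{i=1}^d \binom{\K}{\A_i}/\simeq \vert$ and define $c_0 : \prod_{i=1}^d \binom{\K}{\A_i} \rightarrow \prod_{i=1}^d \binom{\K}{\A_i}/\simeq$ by sending $(x_1,\dots,x_d)$ to its $\simeq$-class. When $N$ is finite this is a legitimate colouring into $N$ colours in the sense of the box Ramsey degree definition. I would first record a companion observation: since any tuple from a subcube $\prod_{i=1}^d \binom{\K^\prime}{\A_i}$ is already a tuple of $\K$, restriction never creates a new $\simeq$-class, while the persistence Lemma shows it never destroys one; hence $\vert \prod_{i=1}^d \binom{\K^\prime}{\A_i}/\simeq \vert = N$ for every $\K^\prime \in \binom{\K}{\K}$, so the right-hand side of the statement is well-defined independently of the choice of $\K^\prime$.

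The core of the argument is a direct appeal to the persistence Lemma just established: for every $\K^\prime \in \binom{\K}{\K}$ and every class $P$, the intersection $P \cap \prod_{i=1}^d \binom{\K^\prime}{\A_i}$ is nonempty. This says precisely that each colour of $c_0$ is realised inside every subcube. Consequently, for every $\K^\prime \in \binom{\K}{\K}$ the image $c_0\big[\prod_{i=1}^d \binom{\K^\prime}{\A_i}\big]$ is the entire set of $\simeq$-classes, so it has cardinality exactly $N$. No choice of $\K^\prime$ can therefore drive the number of colours met below $N$, and since the box Ramsey degree is by definition the least $t$ for which \emph{some} subcopy meets at most $t$ colours for \emph{every} colouring, we conclude $t_\square((\A_1,\dots,\A_d),\K) \geq N$.

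The only point requiring care is the case where $N$ is infinite, in which $c_0$ is not a finite colouring and the inequality must be read as the assertion that no finite box Ramsey degree exists. Here I would argue by truncation: given any candidate $t \in \omega$, choose $t+1$ distinct $\simeq$-classes $P_0,\dots,P_t$ and colour a tuple by its class if that class lies among the $P_j$ and by a single default colour otherwise, yielding a colouring into at most $t+2$ colours. Persistence guarantees each of $P_0,\dots,P_t$ meets every subcube, so every $\K^\prime$ still realises at least $t+1$ colours; as $t$ was arbitrary, no finite $t$ can serve as a box Ramsey degree, which is exactly the meaning of the bound when the right-hand side is infinite. I do not expect any genuine obstacle: the whole proof is a packaging of the persistence Lemma, and the finite/infinite dichotomy is the only bookkeeping subtlety.
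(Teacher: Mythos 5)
Your proof is correct and is exactly the argument the paper intends: the paper states the corollary as immediate from the persistence lemma, and your canonical colouring by $\simeq$-class (with persistence forcing every subcube to meet every class) is precisely that implicit argument, written out. Your two additions --- checking that the right-hand side is independent of the choice of $\K^\prime$, and the truncation trick for infinitely many classes --- are sound bookkeeping that the paper glosses over.
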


	The $\simeq$-classes are not the only obstruction to finite box Ramsey degrees, we also need to worry about the Ramsey degree of traces. We will show in the ultimate theorem of this subsection that these two obstructions are in fact the only obstructions to box Ramsey degrees.

	\begin{lemma}\label{TraceLemma}
		Fix a $\simeq$ equivalence class $P$, and set $\B = \tr(x_1,...,x_d)$ for some $(x_1,...,x_d) \in P$. Suppose $t(\B,\K) =t \in \omega$. Then for every colouring $c:P\rightarrow k$, there is a $\K^\prime \in \binom{\K}{\K}$ such that $\vert c[P\cap \prod\limits_{i=1}^d \binom{\K^\prime}{\A_i}] \vert \leq t\cdot \vert \textnormal{Aut}(\B) \vert$.
	\end{lemma}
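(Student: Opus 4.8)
The plan is to reduce a colouring of the single $\simeq$-class $P$ to a colouring of the copies $\binom{\K}{\B}$ of the trace, and then invoke the hypothesis $t(\B,\K)=t$. The bridge between the two is the trace map $T\colon P\to \binom{\K}{\B}$ sending $(x_1,\dots,x_d)\mapsto \tr(x_1,\dots,x_d)$, whose fibres I first need to control in size. Fix once and for all a representative $z=(z_1,\dots,z_d)\in P$ with $\tr(z)=\B$, and consider a copy $\B^\prime\in\binom{\K}{\B}$. By the definition of $\simeq$, every element of the fibre $T^{-1}(\B^\prime)$ is of the form $(f[z_1],\dots,f[z_d])$ for some partial automorphism $f\colon\B\to\B^\prime$, and conversely each such $f$ yields an element of the fibre. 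Since any two isomorphisms $\B\to\B^\prime$ differ by an element of $\Aut(\B)$, the fibre is a quotient of $\Aut(\B)$; in particular $\lvert T^{-1}(\B^\prime)\rvert\le \lvert\Aut(\B)\rvert$, which is finite because $\B$ is a finite structure.

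Next I would manufacture a finite colouring of $\binom{\K}{\B}$ that remembers all the $c$-colours occurring on each fibre. Choosing, for every copy $\B^\prime$, a single isomorphism $\iota_{\B^\prime}\colon\B\to\B^\prime$, define $\Gamma\colon \binom{\K}{\B}\to k^{\Aut(\B)}$ by $\Gamma(\B^\prime)(\sigma)=c\big(\iota_{\B^\prime}[\sigma[z_1]],\dots,\iota_{\B^\prime}[\sigma[z_d]]\big)$ for $\sigma\in\Aut(\B)$. Since $\Aut(\B)$ and $k$ are both finite, the codomain $k^{\Aut(\B)}$ is finite, so $\Gamma$ is a legitimate finite colouring. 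The key point, following from the fibre analysis above, is that as $\sigma$ ranges over $\Aut(\B)$ the tuples $(\iota_{\B^\prime}[\sigma[z_1]],\dots,\iota_{\B^\prime}[\sigma[z_d]])$ range over all of $T^{-1}(\B^\prime)$; hence the set of $c$-colours appearing on the fibre $T^{-1}(\B^\prime)$ is exactly the image of the function $\Gamma(\B^\prime)$, a set of size at most $\lvert\Aut(\B)\rvert$.

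Now I would apply the hypothesis. Since $t(\B,\K)=t$, there is $\K^\prime\in\binom{\K}{\K}$ with $\lvert\Gamma[\binom{\K^\prime}{\B}]\rvert\le t$, i.e. the copies of $\B$ inside $\K^\prime$ realise at most $t$ distinct colour-functions. Every $w\in P\cap\prod_{i=1}^d\binom{\K^\prime}{\A_i}$ has trace $\tr(w)\in\binom{\K^\prime}{\B}$, and $c(w)$ lies in the image of $\Gamma(\tr(w))$ by the previous paragraph. Thus $c\big[P\cap\prod_{i=1}^d\binom{\K^\prime}{\A_i}\big]$ is contained in the union of the images of at most $t$ functions, each of image-size at most $\lvert\Aut(\B)\rvert$, which yields the desired bound $t\cdot\lvert\Aut(\B)\rvert$.

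I expect the main obstacle to be the bookkeeping in the second step: verifying that $\Gamma$ is genuinely well defined and finite-valued, and that the choice of the isomorphisms $\iota_{\B^\prime}$ (one per copy) does not affect the conclusion, since a different choice merely permutes the argument $\sigma$ and therefore leaves the image of $\Gamma(\B^\prime)$—the only feature we actually use—unchanged. One must also confirm that the trace of a tuple drawn from $\prod_{i=1}^d\binom{\K^\prime}{\A_i}$ really lands in $\binom{\K^\prime}{\B}$ and not merely in $\binom{\K}{\B}$, which is immediate since each block $x_i$ is a subset of $\K^\prime$ and hence so is their union.
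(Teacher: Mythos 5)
Your proof is correct and takes essentially the same approach as the paper: both arguments reduce the colouring of $P$ to a finite auxiliary colouring of $\binom{\K}{\B}$ that records the colours occurring on each fibre of the trace map, bound the fibre size by $\vert\Aut(\B)\vert$, and then invoke $t(\B,\K)=t$ to get the bound $t\cdot\vert\Aut(\B)\vert$. The only difference is cosmetic: the paper colours each copy $\B^\prime$ by the \emph{set} $\hat{c}(\B^\prime)=\{c(\vec{y}):\tr(\vec{y})=\B^\prime\}\in 2^k$, whereas you colour it by a function in $k^{\Aut(\B)}$ (depending on a chosen base isomorphism $\iota_{\B^\prime}$) whose image is exactly that set.
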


	\begin{proof}
		Notice that on the class $P$, the mapping $\tr: P \rightarrow \binom{\K}{\B}$ is an $\textnormal{Aut}(\B)$-to-one map. To see this, take $(x_1,...,x_d)\simeq (y_1,...,y_d)$ and suppose $\tr(x_1,...,x_d) = \tr(y_1,...,y_d) = \B$. Since $(x_1,...,x_d) \simeq (y_1,...,y_d)$, there is an automorphism $f\in \textnormal{Aut}(\B)$ such that $f[x_i] = y_i$. So $\tr$ has at most $\textnormal{Aut}(\B)$ many pre-images. Moreover, there are exactly $\textnormal{Aut}(\B)$ many preimages. This is because for every automorphism $f$ and tuple $(x_1,...,x_d)$, $(x_1,...,x_d) \simeq (f[x_1],...,f[x_d] ) $. Consider then the natural action of $\textnormal{Aut}(\B)$ on $P$. The trace operation is bijective modulo this equivalence. Take a colouring $c: P \rightarrow k$ and consider the colouring $\hat{c}: \binom{\K}{\B} \rightarrow 2^k$ given by 
		\begin{align*}
			\hat{c}(\B^\prime) = \{c(y_1,...,y_d): \tr(y_1,...,y_d)=\B^\prime \}
		\end{align*}
		By the way we've defined our colouring, for every $\B^\prime \in \binom{\K}{\B} $, we have the upper bound $\vert\hat{c}(\B^\prime) \vert \leq \vert \textnormal{Aut}(\B) \vert$. Given $t$ is the Ramsey degree of $\B$ in $\K$, we can find a copy $\K^\prime \in \binom{\K}{\K}$ such that $\vert \hat{c}[\binom{\K^\prime}{\B}] \vert \leq t$. Given that for every $(y_1,...,y_d) \in P \cap \prod\limits_{i=1}^d \binom{\K^\prime}{\A_i}$, $c(y_1,...,y_d) \in \hat{c}(\tr(y_1,...,y_d )) $, we have achieved the bound $\vert c[P\cap \prod\limits_{i=1}^d \binom{\K^\prime}{\A_i}] \vert \leq t\cdot \vert \textnormal{Aut}(\B) \vert $ as desired. 
	\end{proof}

	It is clear to see from the above that the upperbound $t\vert \textnormal{Aut}(\B)\vert $ is tight. This leads to our main result of this subsection.

	\begin{theorem}
		Take $\A_1,...,\A_d \in \textnormal{Age}(\K) $. Let $\vec{x}_i $, $i\in \{1,...,m\} $, be a $\simeq$-transversal, and let $\B_i =\tr(\vec{x}_i)$. We then have the explicit equation for the box Ramsey degree $t_d((\A_1,...,\A_d),\K) = \sum\limits_{i=1}^m t(\B_i,\K) \cdot \vert\textnormal{Aut}(\B_i) \vert$.
	\end{theorem}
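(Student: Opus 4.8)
The plan is to prove the claimed identity by squeezing $t_\square((\A_1,\dots,\A_d),\K)$ between matching lower and upper bounds, with the number $\sum_{i=1}^m t(\B_i,\K)\cdot|\Aut(\B_i)|$ serving as both. The upper bound is essentially a bookkeeping exercise built on Lemma \ref{TraceLemma}, while the lower bound requires exhibiting a single hard colouring; I expect the latter to carry the real content.

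For the upper bound, fix a colouring $c\colon \prod_{i=1}^d\binom{\K}{\A_i}\to k$ and recall that the transversal $\{\vec{x}_1,\dots,\vec{x}_m\}$ indexes the finitely many $\simeq$-classes $P_1,\dots,P_m$, which partition the product. I would run the argument class by class, nesting copies. Setting $\K_0=\K$, at stage $i$ I apply Lemma \ref{TraceLemma} to the class $P_i$ and the restriction of $c$, but working inside the copy $\K_{i-1}$ rather than $\K$; this is legitimate because $\K_{i-1}\in\binom{\K}{\K}$ is isomorphic to $\K$, so big Ramsey degrees are unchanged, $t(\B_i,\K_{i-1})=t(\B_i,\K)$, and $\simeq$ is intrinsic to the finite substructures involved and hence restricts correctly. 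This yields $\K_i\in\binom{\K_{i-1}}{\K}$ with $|c[P_i\cap\prod_{j}\binom{\K_i}{\A_j}]|\le t(\B_i,\K)\cdot|\Aut(\B_i)|$. Because passing to a further subcopy can only shrink the set of realised colours, the bound obtained at stage $i$ survives all later restrictions, so the terminal copy $\K'=\K_m$ satisfies $|c[P_i\cap\prod_{j}\binom{\K'}{\A_j}]|\le t(\B_i,\K)\cdot|\Aut(\B_i)|$ for every $i$ simultaneously. Summing over the partition $\{P_i\}$ gives $|c[\prod_{j}\binom{\K'}{\A_j}]|\le\sum_i t(\B_i,\K)\cdot|\Aut(\B_i)|$, which is exactly the single selection $\K'$ that the box degree demands.

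For the lower bound I would construct a colouring that forces the full sum on every subcopy. On each class $P_i$ I combine two ingredients. First, since $t(\B_i,\K)$ is the big Ramsey degree of $\B_i$, minimality supplies a colouring $c_i\colon\binom{\K}{\B_i}\to k_i$ whose image has size at least $t(\B_i,\K)$ on every subcopy. Second, using that $\tr$ is exactly $|\Aut(\B_i)|$-to-one with $\Aut(\B_i)$ acting simply transitively on each fibre (the fibrewise count established inside Lemma \ref{TraceLemma}), I choose, fibre by fibre, a bijection $\ell_i$ from each fibre $\tr^{-1}(\B')\cap P_i$ onto $\{1,\dots,|\Aut(\B_i)|\}$. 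I then colour $\vec{y}\in P_i$ by the triple $(i,\,c_i(\tr(\vec{y})),\,\ell_i(\vec{y}))$, the leading coordinate $i$ keeping the palettes of distinct classes disjoint. The key observation is that for any copy of $\B_i$ sitting inside a subcopy $\K'$, the entire $\tr$-fibre over it already lies in $\prod_{j}\binom{\K'}{\A_j}$, since every coordinate of a fibre element is contained in that single copy of $\B_i\subseteq\K'$. Persistence forces $\tr$ to surject onto $\binom{\K'}{\B_i}$, so $c_i$ realises at least $t(\B_i,\K)$ trace-colours on $\K'$, and each such trace-colour is witnessed by a copy whose full fibre contributes all $|\Aut(\B_i)|$ labels; the counts multiply rather than merely add, yielding at least $t(\B_i,\K)\cdot|\Aut(\B_i)|$ colours from $P_i$ and hence at least $\sum_i t(\B_i,\K)\cdot|\Aut(\B_i)|$ overall.

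The main obstacle I anticipate lies in the lower bound, specifically in certifying that the two factors genuinely multiply: it is not enough to record a trace-colour and a fibre label independently, one must verify that \emph{complete} fibres persist in every subcopy, so that each surviving trace-colour drags all $|\Aut(\B_i)|$ labels along with it. The containment-of-fibres observation above is what makes this go through, and pinning it down cleanly—together with confirming that the global $\simeq$-relation and the big Ramsey degrees are genuinely insensitive to replacing $\K$ by an isomorphic subcopy during the upper-bound iteration—is where I would concentrate the care.
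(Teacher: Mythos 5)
Your proposal follows the paper's proof in essentially the same way: the upper bound is the identical iterative application of Lemma \ref{TraceLemma} along a nested chain $\K_0 \supseteq \K_1 \supseteq \dots \supseteq \K_m$ of copies, and your lower bound is exactly the ``bad colouring on each of the $P_i$ with disjoint ranges'' that the paper invokes in a single sentence. In fact you supply more detail than the paper on that second half --- the observation that the full $\tr$-fibre over any $\B^\prime \in \binom{\K^\prime}{\B_i}$ lies inside $\prod_j \binom{\K^\prime}{\A_j}$, so that trace-colours and fibre labels multiply --- which is precisely the content the paper leaves implicit.
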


	\begin{proof}
		Let $P_1,...,P_m$ be an exhaustion of the equivalence classes $\prod\limits_{i=1}^d \binom{\K}{\A_i}/\simeq$ with $\tr(\vec{x})=\B_i$ for all $\vec{x} \in P_i $. Take a colouring $c:\prod\limits_{i=1}^d \binom{\K^\prime}{\A_i} \rightarrow k$. Set $\K_0 = \K $ and iteratively apply Lemma \ref{TraceLemma} to find a $\K_{i} \in \binom{\K_{i-1}}{\K}$ such that $c$ takes at most $t(\B_i,\K) \cdot \vert\textnormal{Aut}(\B_i) \vert$ many colours on $P_i \cap \prod\limits_{i=1}^d \binom{\K_i}{\A_i} $. It immediately follows that $c$ takes at most $\sum\limits_{i=1}^m t(\B_i,\K) \cdot \vert\textnormal{Aut}(\B_i) \vert$ many colours on $\prod\limits_{i=1}^d \binom{\K_m}{\A_i} $. That this quantity is a lower bound can be seen by constructing a bad colouring on each of the $P_i$ with disjoint ranges. 
	\end{proof}
	
	We now move to some applications of the existence of box Ramsey degrees. We will start by showing a canonical colouring theorem in the next subsection. 
	
	\subsection{Canonical Relations}
	Similar to big Ramsey degrees, canonical classifications of relations like the Erd\"os-Rado theorem are hard to come by. Some work has been done recently in the structural setting, with the biggest result being that of Laflamme, Vuksanovic, and Sauer, extending the Erd\"os-Rado theorem to countable universal binary homogeneous structures \cite{CanPart}. In this subsection, we extend this to an even wider class of infinite structures, and consider all possible relations. 
	\\
	\\
	We will start this subsection by defining the space of equivalence relations. We have two reasons for focusing on equivalence relations instead of colourings, which we've been using up to this point. The primary reason is that there is no hope in finding a finite list of representatives for $\omega$-colourings $\chi $, because every injection $f:\omega\rightarrow \omega$ encodes a distinct colouring $f\circ \chi$. Equivalence relations are blind to the names of colour classes, and hence there is more hope for categorizing them as it has been historically done (see previous paragraph). The secondary reason is dynamical, though we leave this analysis open-ended. The space of $\omega$-colourings is not compact. However, the space of equivalence relations is. Moreover, this compact space can be naturally acted on via the logic action. Hence, the space of equivalence relations are more amenable to dynamical applications than the space of $\omega$-colourings. 
	
	\begin{definition}
		Let $\K$ be a structure and let $\A\in \textnormal{Age}(\K) $. We let $\EE_\K(\A) $ denote the space of equivalence relations on $\binom{\K}{\A}$, where the topology is inherited from $2^{\binom{\K}{\A}^2 } $.
	\end{definition}

	\begin{fact}
		If $\K$ is a countable structure, $\EE_\K(\A)$ is compact. 
	\end{fact}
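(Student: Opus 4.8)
The plan is to realize $\EE_\K(\A)$ as a closed subspace of the compact space $2^{\binom{\K}{\A}^2}$ and then invoke the standard fact that a closed subspace of a compact space is compact. Write $X = \binom{\K}{\A}$; since $\K$ is countable, $X$ is a countable set, and $2^{X^2}$ with the product topology is a product of copies of the discrete two-point space $\{0,1\}$, hence compact by Tychonoff's theorem (indeed, it is homeomorphic to the Cantor space $2^\omega$). Every equivalence relation $E$ on $X$ is identified with its indicator function $\chi_E \in 2^{X^2}$, where $\chi_E(x,y)=1$ exactly when $(x,y)\in E$, and under this identification $\EE_\K(\A)$ is precisely the set of those $\chi \in 2^{X^2}$ that are indicators of equivalence relations.

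First I would express the property of being an equivalence relation as a conjunction of conditions, each depending on only finitely many coordinates of $2^{X^2}$: reflexivity, that $\chi(x,x)=1$ for each $x\in X$; symmetry, that $\chi(x,y)=\chi(y,x)$ for each $(x,y)\in X^2$; and transitivity, that $\chi(x,y)=1$ and $\chi(y,z)=1$ together imply $\chi(x,z)=1$ for each $(x,y,z)\in X^3$. Then I would observe that each individual condition cuts out a clopen (in particular closed) subset of $2^{X^2}$, since it constrains the values of $\chi$ at only finitely many coordinates --- namely $(x,x)$ for reflexivity, the pair $(x,y),(y,x)$ for symmetry, and the triple $(x,y),(y,z),(x,z)$ for transitivity --- and basic cylinders of the product topology, determined by finitely many coordinates, are clopen. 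The set $\EE_\K(\A)$ is the intersection, over all $x,y,z\in X$, of these closed sets, hence itself closed; being a closed subset of the compact space $2^{X^2}$, it is compact.

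There is no genuine obstacle here: the entire content of the argument is that the three defining axioms are \emph{finitary} and therefore each determines a closed condition, so the routine verification is simply checking that each condition depends on finitely many coordinates. The one point worth stating carefully is that countability of $\K$ (hence of $X$) is not strictly needed for compactness --- Tychonoff applies to arbitrary products --- but it does guarantee that the space is metrizable, which is presumably why the hypothesis is included and why the later remarks about the logic action on $\EE_\K(\A)$ are meaningful.
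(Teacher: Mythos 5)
Your proof is correct and is exactly the argument the paper intends: the definition already places $\EE_\K(\A)$ inside $2^{\binom{\K}{\A}^2}$, and the paper leaves the Fact unproved precisely because the standard closed-subspace-of-a-compact-product argument (reflexivity, symmetry, transitivity each being finitary, hence closed, conditions) is routine. Your side remark that countability is only needed for metrizability, not compactness, is also accurate.
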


	\begin{definition}
		Given $\E \in \EE_\K(\A) $ and $\K^\prime \in \binom{\K}{\K} $, we let $\E \upharpoonright \K^\prime$ denote the restriction of $\E$ to $\binom{\K^\prime}{\A}$.
	\end{definition}

	\begin{definition}
		We call $\mathcal{B} \subseteq \EE_\K(\A) $ a \textit{Ramsey basis} if for every $\E \in \EE_\K(\A) $, there is a $\tilde{\E} \in \mathcal{B} $ and $\K^\prime \in \binom{\K}{\K}$ such that $\E \upharpoonright \K^\prime = \tilde{\E} \upharpoonright\K^\prime $
	\end{definition}
	
	A trick often implemented in structural Ramsey theory is to construct new colourings from old ones. One common such example is that of the product colouring, where we take two colourings $\delta_i:\binom{\K}{\A} \rightarrow k_i$ $i\in \{1,2\}$, and reproduce the new colouring $(\delta_1,\delta_2):\binom{\K}{\A} \rightarrow k_1 \times k_2$. There are many instances where this trick is used, but the author recommends \cite{Zucker2} for some examples. It seems that implimenting this trick is all we need to deduce a Ramsey basis from finite box Ramsey degree. 
	
	\begin{theorem}\label{GeneralizedER}
		Fix an $\A \in \textnormal{Age}(\K)$. If $t_\square((\A,\A),\K)<\infty$, there is a finite Ramsey basis $\mathcal{B} \subseteq \EE_\K(\A)$. 
	\end{theorem}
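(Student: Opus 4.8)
The plan is to first distill from the hypothesis the finite combinatorial data that every equivalence relation must ultimately respect, and then to show that after passing to a subcopy each $\E$ is completely pinned down by that data. By the Corollary to persistence, $t_\square((\A,\A),\K)<\infty$ forces $\prod_{i=1}^{2}\binom{\K}{\A}\big/\simeq$ to be finite, say with classes $P_1,\dots,P_m$; and by the box Ramsey theorem each trace $\B_i=\tr(\vec x_i)$ of a class has finite big Ramsey degree $t(\B_i,\K)<\infty$, since that degree appears as a summand of the finite quantity $\sum_{i=1}^m t(\B_i,\K)\cdot\vert\Aut(\B_i)\vert$. These two finite lists — the $\simeq$-classes and the (finitely many) canonical colour types of each trace — are exactly the invariants a \emph{canonical} equivalence relation should be built from, so $\mathcal B$ will consist of those $\E$ for which $x\mathrel{\E}y$ is a function of the $\simeq$-class of $(x,y)$, the position of $(x,y)$ inside its trace, and a canonical type of that trace.

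The main device is the product-colouring trick routed through the trace map. Fix $\E\in\EE_\K(\A)$ and regard it as $c_\E:\binom{\K}{\A}^2\to 2$. Recall from Lemma \ref{TraceLemma} that $\tr$ is $\Aut(\B_i)$-to-one on $P_i$; hence the restriction of $c_\E$ to $P_i$ is equivalent to a single colouring $d_i^\E:\binom{\K}{\B_i}\to 2^{\Aut(\B_i)}$ recording, for each copy $\B'$ of $\B_i$, the full pattern of $\E$-relatedness among the $\Aut(\B_i)$-many preimages of $\B'$. The symmetry and reflexivity of $\E$ cut down the effective range of $d_i^\E$, but the essential point is only that $2^{\Aut(\B_i)}$ is finite. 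I would then canonicalize all of these at once: iterating over $i=1,\dots,m$ exactly as in the proof of the box Ramsey theorem, I pass to a single $\K'\in\binom{\K}{\K}$ on which every $d_i^\E$ has been reduced to a canonical form, so that $c_\E$ on $\binom{\K'}{\A}^2$ becomes a function solely of the triple consisting of the class $i$, the position in $\Aut(\B_i)$, and the canonical type of $\tr(x,y)$.

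With $\E$ so canonicalized, $\E\upharpoonright\K'$ depends on nothing beyond the finite data enumerated above, and there are only finitely many functions of that data; discarding those that fail to be reflexive, symmetric, or transitive leaves a finite set $\mathcal B\subseteq\EE_\K(\A)$ of genuine equivalence relations. For the fixed $\tilde\E\in\mathcal B$ determined by the canonicalized data of $\E$ we then obtain $\E\upharpoonright\K'=\tilde\E\upharpoonright\K'$, which is precisely the Ramsey basis condition. The one-point case ($m=2$, both traces of degree $1$) recovers the Erd\"os--Rado dichotomy, while $\A$ a single vertex of the Rado graph ($m=3$, with an edge-trace of degree $2$) shows why the trace-type invariant is genuinely needed and reproduces the Laflamme--Sauer--Vuksanovic picture.

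The hard part will be the canonicalization step: upgrading the mere \emph{bound} $t(\B_i,\K)$ on the number of surviving colours of $d_i^\E$ into an honest finite \emph{basis} of canonical colourings of $\binom{\K}{\B_i}$ through which every finite-valued colouring factors after restriction. This is a big-Ramsey-structure statement that does not follow formally from finiteness of the degree alone, since finite degree controls only the size, not the shape, of the surviving colour pattern. I expect to obtain it from the compactness of the relevant finite-valued colouring spaces — mirroring the stated compactness of $\EE_\K(\A)$ — together with the persistence machinery of Section 3.1 applied to tuples drawn from $\binom{\K}{\B_i}$, which should force the canonical types to be independent of the particular $\E$. Making the choice of $\K'$ uniform across the $m$ classes, and verifying that the canonicalized data always assembles into a transitive relation, are the remaining routine points.
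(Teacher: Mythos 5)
You have read the finiteness consequences of the hypothesis correctly (finitely many $\simeq$-classes, finite big Ramsey degree for each trace $\B_i$), and your conversion of $\E$ on each class $P_i$ into a finite-valued colouring $d_i^\E$ of $\binom{\K}{\B_i}$ via the trace map is exactly the bookkeeping of Lemma \ref{TraceLemma}. But the step you defer --- your ``canonicalization step'' --- is not a routine lemma to be filled in later; it is the entire content of the theorem, and your proposed means of obtaining it cannot work. What you need is a finite list of colourings of $\binom{\K}{\B_i}$ such that every finite-valued colouring agrees, as a partition, with one of them on a subcopy. A finite-valued colouring modulo the names of its colours is a finite-index equivalence relation on $\binom{\K}{\B_i}$, so this is precisely the conclusion of Theorem \ref{GeneralizedER} with $\A$ replaced by the larger structure $\B_i$: canonicalizing the binary relation ``same $d_i^\E$-value'' is a statement about colourings of \emph{pairs} of copies of $\B_i$, and would naturally require $t_\square((\B_i,\B_i),\K)<\infty$, which the hypothesis (about pairs of copies of $\A$ only) does not give. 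Even granting it, the reduction regresses to traces of pairs of copies of $\B_i$, structures of size up to $4\vert\A\vert$, and never terminates. Your hope that compactness plus persistence forces the existence of such a basis is unfounded: both of those facts hold for every countable structure whatsoever, with no Ramsey hypothesis at all, so they cannot by themselves produce a finiteness statement of this strength. Your own observation that finite degree ``controls only the size, not the shape, of the surviving colour pattern'' is accurate, and it is fatal to this route as described.

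The missing idea --- and the way the paper's proof goes --- is that one never needs to know the shape of any canonical colouring: a single unavoidable colouring serves as the reference frame. Fix a \emph{bad} colouring $c:\binom{\K}{\A}^2\rightarrow t$, $t=t_\square((\A,\A),\K)$, meaning one all $t$ of whose colours survive on every subcopy; its existence is implicit in the lower-bound half of the box Ramsey theorem. Given $\E$ with indicator $\delta$, apply finiteness of $t_\square$ to the product colouring $(\delta,c)$: on the resulting $\K^\prime$ at most $t$ pairs of values survive, while badness forces all $t$ values of the second coordinate to survive, so on $\binom{\K^\prime}{\A}^2$ the first coordinate is a function of the second; that is, $\A_1 \E \A_2$ if and only if $c(\A_1,\A_2)\in S$ for some $S\subseteq t$. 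The basis is then the at most $2^t$ relations of this form. Note that your size-versus-shape objection is sidestepped rather than answered: nothing about the internal structure of $c$ is used, only its persistence. (One secondary point your sketch shares with the paper's own proof: the relations ``$c(\cdot,\cdot)\in S$'' that arise must still be checked, or adjusted, to be genuine members of $\EE_\K(\A)$; your plan to ``discard'' non-transitive candidates does not obviously suffice, since the candidate attached to a given $\E$ could be among the discarded ones. But that is bookkeeping compared with the canonicalization gap, which is where your argument is missing its essential ingredient.)
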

	\begin{proof}
		Take an equivalence relation $\E \in \EE_\K(\A)$. Let $t=t_\square((\A,\A),\K) $, and let $c:\binom{\K}{\A}^2 \rightarrow t$ be a bad colouring. Let $\delta: \binom{\K}{\A}^2 \rightarrow \{0,1\}$ be defined by $\delta(\A_1,\A_2) = 1$ if and only if $\A_1 \E \A_2$. Consider the product colouring $c_\E:\binom{\K}{\A}^2 \rightarrow \{0,1\}\times t$ given by $c_\E(\A_1,\A_2) = (\delta(\A_1,\A_2), c(\A_1,\A_2)  )$. As $t_\square((\A,\A),\K)$ is finite and $c$ is bad, there is a $\K^\prime \in \binom{\K}{\K}$ and an $S\subseteq t$ such that for all $(\A_1,\A_2) \in \binom{\K^\prime}{\A}$, $\A_1 \E \A_2 $ if and only if $c(\A_1,\A_2)\in S$. It follows then that the finite set $\mathcal{B}$ of equivalence relations $\hat{\E}$ of the form $\A_1 \hat{\E}\A_2$ if and only if $c(\A_1,\A_2) \in S$ for appropriately chosen $S\subseteq t$ forms a Ramsey basis. 
	\end{proof}
	As an immediate consequence, we get Theorem 1.2. However, the above does not tell us how to find the smallest such finite $\mathcal{B}$, nor does it tell us the cardinality of the $B$ we found. At best, it tells us that $\vert \mathcal{B} \vert \leq 2^{t_2((\A,\A),\K)}$. This upperbound is very crude, especially when one considers the Erd\"os-Rado canonical partition theorem on $\omega$. However, in some special cases, we can reduce the number of representatives provided we know in advanced some properties of the structure $\K$. We can also make a statement about relations more generally.
	
	\begin{theorem}
		Let $\A$ be a singleton, and suppose $\L$ is in a language with no unaries so that without loss of generality, $\binom{\K}{\A} = \K $. If $t_\square((\A_1,...,\A_d),\K) < \infty$ where $\A= \A_i$ for all $i\in \{1,...,d\}$, then there is a list of $2^{t_\square((\A_1,...,\A_d),\K)}$ many $d$-ary relations on $\K$ up to restriction.  
	\end{theorem}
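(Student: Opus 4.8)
The plan is to mimic the proof of Theorem~\ref{GeneralizedER} almost verbatim, replacing the equivalence-relation indicator $\delta$ by the indicator of an arbitrary $d$-ary relation and replacing the square $\binom{\K}{\A}^2$ by the full box $\prod_{i=1}^d \binom{\K}{\A_i}$. Since $\A$ is a singleton in a language with no unaries, we identify $\binom{\K}{\A}$ with $\K$, so that $\prod_{i=1}^d \binom{\K}{\A_i} = \K^d$ and a $d$-ary relation $R \subseteq \K^d$ is literally a $\{0,1\}$-colouring $\delta_R$ of the box. The whole point is that a single hard-coded bad colouring of the box is rich enough to encode every $d$-ary relation after passing to a subcopy.

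First I would fix $t = t_\square((\A_1,\dots,\A_d),\K)$ and fix a \emph{bad} colouring $c : \prod_{i=1}^d \binom{\K}{\A_i} \to t$, that is, one witnessing the lower bound, so that $c$ realises all $t$ colours on $\prod_{i=1}^d \binom{\K'}{\A_i}$ for every $\K' \in \binom{\K}{\K}$. Such a colouring is exactly the one assembled in the lower-bound half of the main box Ramsey theorem, by placing disjoint-range bad colourings on each $\simeq$-class. Given a $d$-ary relation $R$, I would form the product colouring $c_R = (\delta_R, c) : \prod_{i=1}^d \binom{\K}{\A_i} \to \{0,1\} \times t$. Applying the finiteness of $t_\square$ to $c_R$ then yields a $\K' \in \binom{\K}{\K}$ with $\vert c_R[\prod_{i=1}^d \binom{\K'}{\A_i}]\vert \le t$.

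The key step is a pigeonhole squeeze. On the subcube over $\K'$ the colouring $c$ already uses all $t$ colours (because $c$ is bad), and $c_R$ projects onto $c$, so $c_R$ uses at least $t$ colours; combined with the upper bound $\le t$ this forces $c_R$ to use exactly $t$ colours and the projection onto the second coordinate to be injective on the realised colours. Hence $\delta_R$ is constant on each $c$-colour class over $\K'$, and setting $S = \{\, j < t : \delta_R \equiv 1 \text{ on } c^{-1}(j)\,\}$ we obtain, for all $\vec{x} \in \prod_{i=1}^d \binom{\K'}{\A_i}$, that $\vec{x} \in R$ if and only if $c(\vec{x}) \in S$. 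Defining $R_S = \{\vec{x} : c(\vec{x}) \in S\}$ for each $S \subseteq t$, this says precisely that $R$ and $R_S$ agree after restriction to $\K'$.

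I would finish by declaring the list $\{R_S : S \subseteq t\}$, which has $2^t = 2^{t_\square((\A_1,\dots,\A_d),\K)}$ members and serves as a basis for $d$-ary relations up to restriction. The main obstacle is not in the argument itself, which is the same product-colouring trick, but in pinning down the precise sense in which the fixed colouring $c$ is \emph{bad}: the squeeze collapses unless $c$ genuinely realises all $t_\square$ colours on \emph{every} subcube, so I would be careful to invoke the explicit lower-bound colouring from the main box Ramsey theorem rather than merely an optimal colouring, and to record the trivial but necessary bookkeeping identifying $d$-ary relations with $\{0,1\}$-colourings of $\K^d$.
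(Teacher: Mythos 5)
Your proposal is correct and follows essentially the same route as the paper: fix a bad colouring witnessing the lower bound of $t_\square((\A_1,\dots,\A_d),\K)$, pair it with the indicator of the relation $R$ via the product-colouring trick, and use the pigeonhole squeeze to conclude that $R$ agrees on a subcopy with one of the $2^{t_\square}$ relations of the form $\{\vec{x} : c(\vec{x})\in S\}$. In fact you spell out the squeeze (exactness of the colour count and injectivity of the projection onto the $c$-coordinate) more explicitly than the paper, which compresses this into the phrase ``apply the product colouring argument.''
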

	\begin{proof}
		Let $R$ be a $d$-ary relation on $\K$. Similar to the proof of the previous theorem, we can view the indicator of $R$, $\delta: \K^d \rightarrow \{0,1\} $ $\delta(x_1,...,x_d) \iff R(x_1,...,x_d)$, as a colouring. Applying the same argument as before, we take a bad colouring $\chi:\K^d \rightarrow t_\square((\A_1,...,\A_d),\K)$, and apply the product colouring argument to find a collection of colour classes $S\subseteq t_\square((\A_1,...,\A_d),\K)$ and $\K^\prime \in \binom{\K}{\K}$ such that $R(x_1,...,x_d) \iff \chi(x_1,...,x_d)\in S$ for all $(x_1,...,x_d) \in (\K^\prime)^d$.   
	\end{proof}
	We then get a precise computation of the number of relations that exist on $\omega$ up to restriction, getting a precise number to Ramsey's theorem of relations on $\omega$ \cite{Ramsey}. This can also be slightly modified to compute the explicit number of canonical relations on $\QQ$.
	\begin{corollary}
		There is a family of precisely $2^{\sum\limits_{k=1}^d s(d,k) } $ many relations on $\omega$ up to restriction, where $s(d,k) = \sum\limits_{i=0}^k (-1)^i \binom{k}{i}(k-i)^d $, the number of surjections from a set of size $d$ to a set of size $k$. 
	\end{corollary}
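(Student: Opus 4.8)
The plan is to specialise the two results of this subsection to the pure set $\K=\omega$ (the empty relational language, in which every finite substructure has big Ramsey degree $1$ by the infinitary Ramsey theorem), with $\A$ a one-point structure, so that $\binom{\omega}{\A}=\omega$ and a $d$-ary relation is honestly a subset of $\omega^d$. The preceding theorem already supplies the upper bound $2^{t_\square((\A,\dots,\A),\omega)}$, so the genuine work splits into two tasks: first evaluate $t_\square$ explicitly, and second upgrade the resulting inequality to an equality by exhibiting that many pairwise restriction-inequivalent relations.

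First I would compute $t_\square$ from the explicit box Ramsey equation $t_\square=\sum_{i=1}^m t(\B_i,\omega)\cdot|\Aut(\B_i)|$. Over a pure set every partial injection is a partial automorphism, so two tuples $(x_1,\dots,x_d),(y_1,\dots,y_d)\in\omega^d$ satisfy $\simeq$ precisely when they share the same equality pattern $\{(i,j):x_i=x_j\}$; hence the $\simeq$-classes are in bijection with the set partitions of $\{1,\dots,d\}$. A class whose pattern has $k$ blocks has trace $\B$ a $k$-element set, for which $t(\B,\omega)=1$ and $|\Aut(\B)|=k!$ (the full symmetric group on $k$ points). Grouping partitions by their number of blocks $k$, of which there are $S(d,k)$ (Stirling numbers of the second kind), I obtain
\begin{equation*}
t_\square((\A,\dots,\A),\omega)=\sum_{k=1}^d S(d,k)\cdot k!=\sum_{k=1}^d s(d,k),
\end{equation*}
using the identity $s(d,k)=k!\,S(d,k)$ expressing that the number of surjections from a $d$-set onto a $k$-set is $k!$ times the Stirling number. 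Feeding this into the preceding theorem caps the count at $2^{\sum_k s(d,k)}$.

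For the matching lower bound I would index the $\sum_k s(d,k)$ \emph{atoms} by the surjections $f:\{1,\dots,d\}\rightarrow\{1,\dots,k\}$, $k=1,\dots,d$ (equivalently, ordered set partitions of $\{1,\dots,d\}$): each such $f$ names the canonical type of those tuples $(x_1,\dots,x_d)$ whose equality pattern is the fibre partition of $f$ and which respect $f$ in the sense that $x_i<x_j$ whenever $f(i)<f(j)$, where the ambient well-order of $\omega$ is used purely as bookkeeping. For each set $T$ of atoms I let $R_T$ be the relation holding exactly on tuples whose atom lies in $T$, giving $2^{\sum_k s(d,k)}$ relations. I would then show these are pairwise restriction-inequivalent by persistence: every atom is realised inside every subcube $(\K')^d$ (choose $k$ increasing elements of $\K'$ and fill the coordinates according to $f$), so if $T\neq T'$ then some atom in their symmetric difference is witnessed on each infinite $\K'$, forcing $R_T$ and $R_{T'}$ to disagree on $(\K')^d$ for every $\K'$. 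Combined with the upper bound this pins the count at exactly $2^{\sum_k s(d,k)}$.

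The step I expect to be the main obstacle is this lower bound, and inside it the honest verification that the $k!$ atoms living in a single $\simeq$-class remain distinguishable up to restriction despite $\Aut(\B)=S_k$ acting transitively on them. The subtlety is that restriction, unlike isomorphism, forbids relabelling, so order-reversed representatives such as $\{(x,y):x<y\}$ and $\{(x,y):x>y\}$ for $d=2$ must be shown to disagree on \emph{every} infinite subset; here persistence together with the inherited well-order on $\K'\subseteq\omega$ does the decisive work, and it is worth noting that the tightness clause of the explicit box Ramsey theorem (the bad colourings with disjoint ranges on each $P_i$) is precisely the abstract incarnation of this distinguishability.
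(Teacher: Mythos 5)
Your proposal is correct, and it reaches the count by a genuinely different decomposition than the paper's. The paper implicitly treats $\omega$ as the linear order $(\omega,<)$: there every trace is a \emph{rigid} finite linear order of some size $k$ with Ramsey degree $1$, so the box Ramsey degree is literally the number of $\simeq$-classes, and these classes are indexed directly by surjections $f:\{1,\dots,d\}\rightarrow\{1,\dots,k\}$ (the map recording the position of each coordinate in the increasing enumeration of the trace); the paper then stops at the citation of the preceding theorem. You instead take $\omega$ to be the pure set, where the $\simeq$-classes are the unordered equality patterns ($S(d,k)$ of them with trace of size $k$), and you recover the missing factor from $\vert\Aut(\B)\vert=k!$ in the formula $t_\square=\sum_i t(\B_i,\omega)\cdot\vert\Aut(\B_i)\vert$, reconciling the totals through the identity $s(d,k)=k!\,S(d,k)$. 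Both computations are valid, but it is worth seeing why they must differ in form: the paper's shortcut ``count the classes'' is only available because its traces are rigid, whereas in your setting counting classes alone would give the Bell number, so carrying the automorphism weights is essential and you do so correctly. The more substantial difference is that you actually prove the lower bound: the paper never argues that its $2^{\sum_k s(d,k)}$ relations are pairwise inequivalent under restriction (the preceding theorem only supplies a basis, hence an upper bound), while your persistence argument for the ordered-partition atoms is exactly what the word ``precisely'' requires, and it correctly handles the delicate point that the $k!$ atoms inside one pure-set $\simeq$-class (e.g.\ $\{(x,y):x<y\}$ versus $\{(x,y):x>y\}$) never merge on any infinite subset. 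One small refinement to your last step: ``a basis of size $2^N$ exists'' together with ``an antichain of size $2^N$ exists'' does not by itself produce a single family that is both, since the agrees-on-a-copy relation is not transitive (already for $d=1$, the set of even numbers agrees with $\omega$ on a copy and with $\emptyset$ on a copy, yet $\omega$ and $\emptyset$ agree nowhere). To close this, instantiate the bad colouring $\chi$ in the preceding theorem as your atom colouring --- it is bad precisely because of your persistence observation --- so that the basis produced by that theorem is literally your family of relations $R_T$, which your argument then shows to be pairwise inequivalent; this also makes your family coincide with the paper's canonical one.
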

	\begin{proof}
		Let $\A_1=...=\A_d$ be the isomorphism class of a singleton in $\omega$. Since singletons have Ramsey degree $1$ in $\omega$, by the pigeonhole principle, it suffices to compute the box Ramsey degree by counting the number of $\simeq$ classes in $\omega^d$. Take $\vec{x}= (x_1,...,x_d) \in \omega^d$. Note that the trace $\text{tr}(\vec{x})$ is isomorphic as a linear order to $\{1,...,k\}$. Let $g_{\vec{x}}:\tr(\vec{x}) \rightarrow \{1,...,k\}$ be the unique such isomorphism. Consider $f_{\vec{x}} :\{1,...,d\}\rightarrow \{1,..,k\}$ be given by $f_{\vec{x}}(i) = j$ if and only if $g_{\vec{x}}(x_i) = j$. Given $\vec{x} \simeq (f_{\vec{x}}(1),..., f_{\vec{x}}(d) )$, it is easy then to verify that $\vec{x} \simeq \vec{y}$ if and only if $f_{\vec{x}} = f_{\vec{y}}$. Consequently, every $\simeq$ class is encoded by a surjection from $\{1,...,d\}$ to $\{1,...,k\}$ for some $k\in \{1,...,d\}$. Hence, $t_\square((\A_1,...,\A_d),\omega) = \sum\limits_{k=1}^d s(d,k) $ and we're done by the previous theorem. 
	\end{proof}
	
	\subsection{Analysis For Saturated Structures}
	Understanding the history of the big Ramsey degree computation for $\QQ$ is quite insightful for better understanding the decisions and directory the field of structural Ramsey theory is taking. First, in an unpublished note, Laver shows that the Ramsey degrees for $\QQ$ were finite by an application of Milliken's theorem for trees \cite{Devlin}. The trick is to encode the rationals onto the infinite binary tree in such a manner that the tree order encapsulates information about the order on $\QQ$. Getting an exact computation required Devlin to understand the fundamental patterns that would appear in the tree via antichains, now given his namesake \textit{Devlin types}. In a paper by Sauer, encoding structures onto trees is extended to universal countable binary structures, like the Random graph \cite{Sauer}. The most sophisticated extension of this are \textit{coding tree} constructions of Dobrinen \cite{Dobrinen}. The core idea, which we will further elaborate on below, is as follows:
	\begin{itemize}
		\item Enumerate your structure $\K$.
		\item Create a tree $T$ with the property that the $i$th level has a unique representative for the $i$th indexed member from $\K$, and passing type encapsulates information on how the $i$th member relates to previous members. 
		\item Prove a Ramsey theorem specialized to the tree $T$, and use that there are only finitely many ways to represent a structure $\A$ in $T$ to deduce an upperbound on $t(\A,\K)$. 
	\end{itemize}
	The hardest step is by far the last step, where the current popular methodology for complex structures is an appeal to forcing \cite{CDP1,CDP2,Dobrinen,Zucker2}. Interestingly, strong amalgamation conditions correspond to a type of duplication property for nodes in $T$. This duplication can then be used to explicitly compute big Ramsey degrees by counting antichains, akin to Devlin \cite{CDP1,CDP2}. We will show that a special duplication property can lead to efficient computations of Ramsey bases when $\A$ is a singleton. We fix a finite language $\mathcal{L} = \{R_i : i \in L \}$ of irreflexive binary relations with associated \Fra\ class $\KK$, \Fra\ limit $\K$, and enumeration $\K = \{v_i : i \in \omega   \} $. It is common to assume without loss of generality that for every distinct pair $x,y\in \K$, there is a unique $i\in L$ such that $x R_i y $. We will move forward with this assumption. 
	
	\begin{definition}
		Consider the tree of finite partial functions into $L$, $L^{<\omega}$, made into an $L$-ary tree via the extension relation $\sqsubseteq$. For each $i \in \omega$, let $c_i\in L^{<\omega}$ denote the unique node of length $i$ with the property that $c_i(k) = j $ if and only if $v_k R_j v_i$. We refer to $C$ as \textit{coding nodes}. Let $T$ denote the $\sqsubseteq$-downward closure of $C$. We call $T$ the \textit{coding tree} associated to $\K$ (and the provided enumeration). 
	\end{definition}
	\begin{definition}
		Given a finite antichain $S=\{s_i:i<n\} \subseteq T$ where each node appears on a different level, we denote \textit{the structure encoded by} $S$ to be the isomorphism type of a structure $\mathbf{D}=\{d_i: i<n\} $ where $d_k R_j d_i$ if and only if $s_i (|s_k|) = j$, where $|s_k|$ is the height of $s_k$. 
	\end{definition}
	
	Note, the definition we have chosen to use is entirely combinatorial compared to the more model theoretic definition found in the works of Coulson, Dobrinen, and Patel \cite{CDP1,CDP2} and is more in line with the definition from Dobrinen's initial work on the Henson graph \cite{Dobrinen}. The model theoretic framework is arguably a better one, as it allows for an analysis where the binary relations are not irreflexive. In particular, there is a strong classification of the big Ramsey degrees of $\QQ_\QQ$ in \cite{CDP1} that our methods indexed here will not work on, as we'd need to consider separating the reflexive components of the binary relations into infinitely many unaries, which makes the method of analyzing trees fall apart. Despite this, this will not be a concern to us. In fact, the ultimate result of this section fails in the case of $\QQ_\QQ$, precisely because the structure itself is equipped with a nontrivial equivalence relation that must be preserved when moving to a subcopy of the rationals. 
	
	\begin{definition}[Coulson, Dobrinen, Patel \cite{CDP1,CDP2}]
		Let $T$ be a coding tree of some structure $\K$. Let $\A$ be a finite substructure of $\K$ with cardinality $n$. Let $\mathcal{OA}$ denote the set consisting of one representative from each isomorphism class of ordered copies of $\A$. Given $(\A,<)\in \mathcal{OA}$ ($\A = \{a_i: i<n\} $) we say a tree $S\subseteq T$ is a \textit{diagonal tree coding} $(\A,<)$ if the following hold:
		\begin{enumerate}
			\item $S$ is a finite tree with $n$ terminal nodes and branching degree two.
			\item $S$ has at most one branching node in any given level, and no two distinct nodes from among the branching nodes and terminal nodes have the same length. Hence, $T$ has $2n-1$ many levels. 
			\item Let $\{d_i: i <n\}$ be an enumeration of the terminal nodes of $S$. The increasing bijection from $\{a_i: i <n\} $ to $\{d_i: i <n\} $ is an orer isomorphism between $\A$ and the structure encoded by $\{d_i: i <n\}$. 
		\end{enumerate}
		We let $\mathcal{DA}(\A,<)$ denote the set of isomorphism classes of distinct diagonal trees encoding $(\A,<)$. 
	\end{definition}
	Note our third condition differs slightly from what can be found in the Coulson, Dobrinen, Patel paper, but is identical given our encoding. 
	\begin{theorem}[Coulson, Dobrinen, Patel \cite{CDP1,CDP2}]\label{CDPTheorem}
		Suppose $\mathcal{L}$ is a language consisting of finitely many binary relations. Suppose $\K$ is \Fra\ with \Fra\ limit $\K$, and satisfies SDAP\textsuperscript{+} or LSDAP\textsuperscript{+}. Fix $\A\in \KK$. We then have the following equation
		\begin{align*}
			t(\A,\K) &= \sum\limits_{(\A,< )\in \mathcal{OA}} |\mathcal{DA}(\A,<)|
		\end{align*}
	\end{theorem}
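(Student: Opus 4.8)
The plan is to prove the two inequalities $t(\A,\K) \le \sum_{(\A,<)\in\mathcal{OA}}|\mathcal{DA}(\A,<)|$ and $t(\A,\K) \ge \sum_{(\A,<)\in\mathcal{OA}}|\mathcal{DA}(\A,<)|$ separately, with the coding tree $T$ as the central object in both directions. The box Ramsey machinery of Section 3.1 is not the right tool here, since it consumes one-dimensional big Ramsey degrees rather than producing them; instead I would work directly inside $T$. The starting observation is that, once the enumeration $\K=\{v_i:i\in\omega\}$ is fixed, every copy of $\A$ in $\K$ is recorded by the finite antichain of coding nodes it determines, and the $\sqsubseteq$-meet closure of that antichain is a finite subtree of $T$. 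The isomorphism type of this subtree, together with the linear order that the enumeration induces on the copy, is the natural pattern invariant, and the entire content of the theorem is that the patterns surviving in a subcopy are exactly the diagonal ones counted by $\mathcal{DA}$.

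For the upper bound I would first carry out a diagonalization step: by thinning $T$ to a strong subtree one may assume that the meet closure of every copy of $\A$ has at most one branching node per level and that all branching and coding levels are distinct, so that its type lies in $\bigcup_{(\A,<)\in\mathcal{OA}}\mathcal{DA}(\A,<)$. This is where $\textnormal{SDAP}\textsuperscript{+}$ or $\textnormal{LSDAP}\textsuperscript{+}$ enters: the amalgamation strength is precisely what guarantees that the coding tree admits diagonal subtrees in which every required configuration of $\mathcal{L}$-relations is still realized. The heart of the argument is then a Milliken-style \emph{level-set Ramsey theorem} for the coding tree, asserting that for any finite colouring of the copies of a fixed finite diagonal tree there is a strong subtree isomorphic to $T$ on which the colour depends only on the isomorphism type of the configuration. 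Iterating this over the finitely many diagonal shapes in $\bigcup_{(\A,<)}\mathcal{DA}(\A,<)$ yields a single subcopy $\K^\prime\in\binom{\K}{\K}$ on which any colouring $c:\binom{\K}{\A}\to k$ factors through the diagonal-coding type, so $c$ takes at most $\sum_{(\A,<)}|\mathcal{DA}(\A,<)|$ values.

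For the lower bound I would colour each copy of $\A$ by the isomorphism type of its diagonal coding and show that each such type is \emph{persistent}: inside every $\K^\prime\in\binom{\K}{\K}$, and for each $(\A,<)\in\mathcal{OA}$ together with each shape in $\mathcal{DA}(\A,<)$, there is a copy of $\A$ in $\K^\prime$ realizing that shape. Here again the amalgamation hypothesis does the work, since it lets one embed any prescribed diagonal configuration cofinally into $T$, so that no passage to a subcopy can ever erase a shape. Consequently no subcopy merges two distinct diagonal types, and we obtain the matching lower bound and hence the desired equality.

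The main obstacle is unquestionably the level-set Ramsey theorem for the coding tree. Establishing it requires building, by a forcing argument, a sequence of strongly diagonal subtrees along which a given colouring stabilises, and the $\textnormal{SDAP}\textsuperscript{+}/\textnormal{LSDAP}\textsuperscript{+}$ hypotheses are exactly the structural conditions that keep this construction from destroying the realizability of the relations of $\mathcal{L}$. Verifying that the forcing preserves both the tree order and the coding data simultaneously is the delicate point; by contrast, the diagonalization and persistence steps are comparatively routine bookkeeping once the tree Ramsey theorem is in hand.
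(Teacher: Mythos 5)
You should first note a structural point: the paper does not prove this statement at all. It is Theorem~\ref{CDPTheorem} precisely because it is imported as a black box from Coulson--Dobrinen--Patel \cite{CDP1,CDP2}; the paper only \emph{uses} it (in the proof of Theorem~\ref{equivdichotomy}) and never argues for it. So there is no internal proof to compare your attempt against, and the relevant benchmark is the published CDP argument itself.

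Measured against that benchmark, your outline does track the actual strategy in the literature --- encode copies of $\A$ by antichains of coding nodes, diagonalize so that only the shapes in $\bigcup_{(\A,<)}\mathcal{DA}(\A,<)$ survive, prove a Ramsey theorem for configurations in the coding tree, and get the lower bound from persistence of each diagonal type in every subcopy. But as a proof it has a genuine gap, and you have located it yourself: the ``level-set Ramsey theorem'' you invoke is not a lemma one can defer, it \emph{is} the theorem. Essentially all of the content of \cite{CDP1,CDP2} consists of establishing exactly that statement, via forcing arguments in the style of Harrington's proof of Halpern--L\"auchli adapted to trees with coding nodes, and of verifying that SDAP\textsuperscript{+}/LSDAP\textsuperscript{+} lets the construction preserve realizability of the relations. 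Two further points deserve care in your sketch. First, ``thinning $T$ to a strong subtree'' in the Milliken sense is not available here: coding nodes break strong-subtree arguments (a strong subtree need not contain, or respect, the coding nodes), which is precisely why the forcing machinery on coding trees was invented; your diagonalization step must instead pass to a diagonal \emph{antichain} of coding nodes representing a subcopy of $\K$. Second, the lower bound needs more than persistence of each shape: you must also show that distinct diagonal types can never be merged by passing to a subcopy, i.e.\ that the type of the meet-closure is an invariant of the copy of $\A$ relative to any $\K'\in\binom{\K}{\K}$, which requires an argument that subcopies of $\K$ induce shape-preserving maps on meet-closures. You assert both facts but prove neither; naming the obstacle is not the same as overcoming it.
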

	
	\begin{definition}
		We say a structure $\K$ has \textit{free duplication} if for every $j\in L$, $x,y\in \K $ such that $x R_i y$, there are infinitely many $z\in \K $ such that $xR_i z$ and $z R_j y $
	\end{definition}
	Note that $\QQ$, the Random graph, and the $k$ edge labeled Random graph are examples of such structures. Free duplication is a saturation type condition that translates to combinatorial properties of the coding tree associated to $\K$. 
	\begin{theorem}\label{equivdichotomy}
		Let $\E$ be an equivalence relation on $\K$. There is a copy $\K^\prime \in \binom{\K}{\K}$ such that $\E\upharpoonright \K^\prime$ is either the equality relation, or $\E$ only has one class.
	\end{theorem}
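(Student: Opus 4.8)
The plan is to prove a clean dichotomy at the level of copies of $\K$, pushing the coding-tree combinatorics into two structural consequences of free duplication. Write $\chi:\K\to\omega$ for the colouring whose fibres are the $\E$-classes, so that ``$\E\upharpoonright\K'$ is equality'' means ``$\chi\upharpoonright\K'$ is injective'' and ``$\E\upharpoonright\K'$ has one class'' means ``$\chi\upharpoonright\K'$ is constant''. I would then split on whether some $\E$-class is large: either (i) some class $E$ contains a copy of $\K$, in which case taking $\K'$ to be that copy already gives $\E\upharpoonright\K'=$ one class and we are done; or (ii) no $\E$-class contains a copy of $\K$, which is the case the argument must work for, and where I would build a rainbow (injective) copy by recursion.

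Before the recursion I would isolate two facts that free duplication buys us. The first is a \emph{cone} lemma: for every finite $F\subseteq\K$ and every $1$-type $\tau$ over $F$ realised in $\K$, the set $C(F,\tau)$ of realisations of $\tau$ contains a copy of $\K$. This is the self-similarity visible in the motivating examples---an upward cone $(q,\infty)$ in $\QQ$, or a neighbourhood $N(v)$ in the random graph, is again a copy of the whole structure---and free duplication is precisely the amalgamation-style input that lets a back-and-forth build a copy of $\K$ inside $C(F,\tau)$ without ever leaving the cone. The second fact is \emph{indivisibility}: every finite colouring of $\K$ has a monochromatic copy of $\K$. In the coding-tree language this says that a singleton has big Ramsey degree $1$ (there is a unique diagonal tree on a single terminal node), which is where the branching consequence of free duplication enters; I would either read this off Theorem~\ref{CDPTheorem} or prove it directly from the tree.

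With these in hand, case (ii) runs as a greedy construction. Fix a bookkeeping that enumerates the finite configurations needed to force the limit to be isomorphic to $\K$; at stage $n$ we have chosen $w_0,\dots,w_{n-1}$ with pairwise distinct colours and are handed a $1$-type $\tau_n$ over $\{w_0,\dots,w_{n-1}\}$ to realise. The cone $C_n:=C(\{w_0,\dots,w_{n-1}\},\tau_n)$ contains a copy of $\K$ by the cone lemma, so $\chi[C_n]$ cannot be finite: were it finite, indivisibility applied to that copy would produce a monochromatic subcopy of $\K$ inside a single $\E$-class, contradicting the case~(ii) hypothesis. Hence $\chi[C_n]$ is infinite, and since only finitely many colours are forbidden while $C_n$ is infinite, I may choose $w_n\in C_n$ realising $\tau_n$ with $\chi(w_n)$ a fresh colour. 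The union $\K':=\{w_n:n\in\omega\}$ is a copy of $\K$ by the usual extension argument, and $\chi\upharpoonright\K'$ is injective, i.e. $\E\upharpoonright\K'$ is equality.

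I expect the main obstacle to be the two structural lemmas rather than the dichotomy itself: verifying that free duplication really forces each type-cone $C(F,\tau)$ to contain a copy of $\K$ (one must check that the relations prescribed to the newly added points are jointly amalgamable with the constraint of staying in the cone, using the irreflexive binary, one-relation-per-pair setting), and pinning down indivisibility of $\K$ from the coding tree. A more \emph{canonical} alternative, closer to the box Ramsey machinery developed here, would be to feed the indicator $\delta(\{x,y\})=[x\E y]$ into the finite big Ramsey degree for pairs, extract a subcopy on which $x\E y$ depends only on the $2$-type of $\{x,y\}$, and then use free duplication to realise the triples that make transitivity collapse the set of ``equivalent'' $2$-types to either empty (equality) or everything (one class); I would keep this in reserve as a cross-check.
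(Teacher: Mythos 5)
Your main route --- the dichotomy between (i) some $\E$-class containing a copy of $\K$ and (ii) a greedy rainbow construction --- has a genuine gap at the cone lemma, and it is not a technicality. Free duplication, as defined in the paper, is a one-anchor condition: given $x R_i y$ it produces duplicates $z$ of $y$ over the single point $x$ with a prescribed relation back to $y$. It does not imply that type-cones contain copies of $\K$, even cones over singletons. The dense local order $S(2)$ (the homogeneous tournament on countably many pairwise non-antipodal points of the circle, with $u \rightarrow v$ iff $v$ lies in the open counterclockwise half-circle at $u$) is a Fra\"iss\'e limit in a language of two irreflexive binary relations with exactly one relation per pair, and it satisfies free duplication: for $x \rightarrow y$, each of the sets $\{z : x \rightarrow z \rightarrow y\}$ and $\{z : x \rightarrow z,\ y \rightarrow z\}$ is an infinite arc, and the remaining cases are symmetric. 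Yet the out-cone of any vertex of $S(2)$ is a transitive tournament, a copy of $(\QQ,<)$, which contains no $3$-cycle and hence no copy of $S(2)$: your cone lemma fails. Worse, the theorem itself fails for $S(2)$: cut the circle into two half-circles; each class of the resulting two-class equivalence relation induces a transitive tournament, so no copy of $S(2)$ lies inside one class, while every copy is infinite and so meets some class in two points, so the restriction is never equality either. Consequently no argument running on free duplication alone can succeed; the load-bearing hypothesis is SDAP\textsuperscript{+}, the standing assumption behind Theorem \ref{CDPTheorem}, which $S(2)$ fails (it is divisible, so even your indivisibility input already presupposes SDAP\textsuperscript{+}). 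Your construction is repairable in principle --- the greedy bookkeeping and the ``finite image forces a monochromatic copy'' step are sound --- but the cone lemma must be extracted from SDAP\textsuperscript{+} (its extension property for diagonal coding trees), not from free duplication, and that derivation is exactly the nontrivial content your sketch defers.

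By contrast, your ``reserve'' plan is, nearly verbatim, the paper's actual proof of Theorem \ref{equivdichotomy}: pair the indicator of $\E$ with a bad colouring, invoke finiteness of the big Ramsey degree for pairs (Theorem \ref{CDPTheorem}) to pass to a subcopy on which $x \E y$ depends only on the canonical class of $\{x,y\}$, then use free duplication plus transitivity to show that the set $S$ of ``equivalent'' classes is either empty (equality) or everything (one class). Two refinements are needed to execute it. First, the canonical classes are not $2$-types but diagonal tree types: a single $2$-type splits further according to relative heights and passing numbers in the coding tree (Devlin-type data), so $S$ lives on this finer index set and the collapsing argument must sweep up all height-$3$ trees, not just all $2$-types. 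Second, the duplication step must produce, for a pair $x \E y$ whose tree type lies in $S$, a point $z$ such that $\{z,y\}$ realizes that same tree type while $\{x,z\}$ realizes an arbitrarily prescribed one; transitivity then forces every height-$3$ tree into $S$. I would promote the reserve plan to your main proof and keep the rainbow construction only as a remark, conditional on a cone lemma established from SDAP\textsuperscript{+}.
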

	\begin{proof}
		Since Ramsey degrees are finite by Theorem \ref{CDPTheorem}, it suffices to assume $\E$ is of the form 
		\begin{align*}
			x\E y &\iff \{x,y\} \in S\\
			S &\subseteq \bigcup\limits_{\substack{ \A\in \KK \\ |\A|\leq 2} } \bigcup\limits_{(\A,<)\in \mathcal{OA}} \mathcal{D}(\A,<)
		\end{align*}
		We break then into cases. There are only two to consider. If the set $S$ contains only trees of height $1$, $x\E y$ if and only if $x=y$. This is because if $x\neq y$, $(\{x.y\},<)$ is encoded by trees of height $3$. So, we assume the set $S$ contains a tree $D$ of height $3$. Without loss of generality, let us assume that $x$ and $y$ are encoded by the leaves of $D$ with $x<y$, $x$ of higher height than $y$, and $yR_i x$. We may also suppose that there is some other coding node $p$ between the root of $D$ and $x$.\\
		\\
		Consider some $R_j$. By point duplication, we know for certain that there is some $z$ such that $xR_j z$ and $y R_j z$. Applying point duplication to $x$ again if need be, we may assume instead that the height of $x$ is larger than the height of $z$ and $z R_j x $. It follows that the antichain $\{z,y\}$ encodes a tree isomorphic to $D$, but $\{x,z\}$ (up to increasing the height of $x$) can be of the form $x<z$, $\text{ht}(x)<\text{ht}(z)$, and the passing is of type $j$, or $x<z$, $\text{ht}(x)>\text{ht}(z)$, and the passing is of height $j$. See Figure 1.
		\\
		\\
		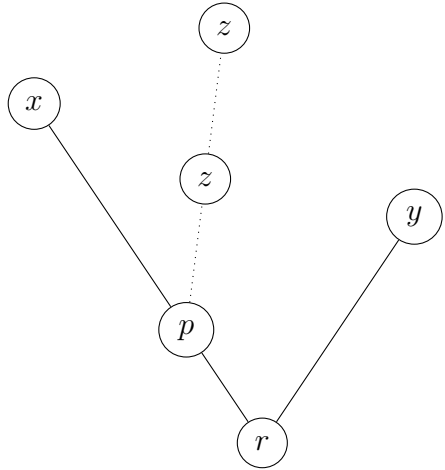
\begin{figure}[htp]
			\centering 
			\begin{tikzpicture}[main/.style = {draw, circle}] 
				\node[main] at (0,0) (1) {$r$};
				\node[main] at (2,3) (2) {$y$};
				\node[main] at (-1,1.5) (3) {$p$};
				\node[main] at (-3,4.5) (4) {$x$};
				\node[main] at (-.75,3.5) (5) {$z$};
				\node[main] at (-.5,5.5) (6) {$z$};
				\draw (1) -- (2);
				\draw (1) -- (3);
				\draw (3) -- (4);
				\draw[dotted] (3) -- (5);
				\draw[dotted] (5) -- (6);
			\end{tikzpicture} 
			\caption{Two possible $z$ with same passing relative to $y$ as $x$, but can have distinct passing and height with respect to $x$.}
			\label{fig:my_label}
		\end{figure}
		Since $\E$ is an equivalence relation, $x\E y$, and $y\E z$, it follows that $x\E z$ and so the other two trees where the passing is $j$ must appear in $S$. Since $j$ was arbitrary, all trees of height $3$ (with $2$ antichains) must belong to $S$ and hence $\E$ has only one equivalence class. It follows that $\E$ was either constant or equality as desired. 
	\end{proof}
	A simple consequence is the \textit{rainbow Ramsey} property \cite{Rainbow} for singletons. 
	\begin{corollary}
		If $\chi : \K \rightarrow \omega$ is such that for each $i\in \omega$, $|\chi^{-1}(i)|$ finite, these is $\K^\prime \in \binom{\K}{\K}$ such that $\chi^\prime \upharpoonright \K^\prime$ is injective. 
	\end{corollary}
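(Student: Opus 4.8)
The plan is to derive this as a direct consequence of Theorem \ref{equivdichotomy} by encoding the colouring $\chi$ as an equivalence relation and ruling out the degenerate case. First I would define the equivalence relation $\E$ on $\K$ by declaring $x \E y$ if and only if $\chi(x) = \chi(y)$; that is, $\E$ is the kernel of $\chi$, whose classes are exactly the fibres $\chi^{-1}(i)$. By Theorem \ref{equivdichotomy}, there is a copy $\K^\prime \in \binom{\K}{\K}$ on which $\E \upharpoonright \K^\prime$ is either the equality relation or the relation with a single class.

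Next I would argue that the single-class outcome is impossible under the hypothesis that each fibre $\chi^{-1}(i)$ is finite. If $\E \upharpoonright \K^\prime$ had only one class, then $\chi$ would be constant on $\K^\prime$, say with value $i$, forcing $\K^\prime \subseteq \chi^{-1}(i)$. But $\K^\prime$ is a copy of $\K$ and hence infinite, while $\chi^{-1}(i)$ is finite by assumption, a contradiction. Therefore $\E \upharpoonright \K^\prime$ must be the equality relation. Equality of the kernel on $\K^\prime$ says precisely that $\chi(x) = \chi(y)$ implies $x = y$ for all $x, y \in \K^\prime$, which is exactly the statement that $\chi \upharpoonright \K^\prime$ is injective.

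The only subtlety worth flagging is that Theorem \ref{equivdichotomy} is stated for equivalence relations on $\K$ regarded as living in the coding-tree framework, so I would want to confirm that an arbitrary kernel equivalence relation falls within its scope; but the theorem as stated applies to \emph{any} equivalence relation $\E$ on $\K$, so no extra hypothesis on $\chi$ beyond finiteness of fibres is needed. There is no real obstacle here: the entire content of the corollary is packaged in Theorem \ref{equivdichotomy}, and the finiteness assumption does nothing more than exclude the constant branch of the dichotomy. (I note in passing that the corollary's displayed conclusion writes $\chi^\prime \upharpoonright \K^\prime$, which appears to be a typo for $\chi \upharpoonright \K^\prime$, since no second colouring $\chi^\prime$ is introduced.)
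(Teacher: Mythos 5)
Your proof is correct and matches the paper's intended derivation: the corollary is stated there as a ``simple consequence'' of Theorem \ref{equivdichotomy}, and your argument --- apply the dichotomy to the kernel of $\chi$, then rule out the single-class branch since a copy $\K^\prime$ is infinite while each fibre $\chi^{-1}(i)$ is finite --- is exactly that consequence. Your note that $\chi^\prime$ in the statement is a typo for $\chi$ is also correct.
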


	\subsection{Application to Indexed Structures}

	In this subsection, we will analyze Ramsey properties of structures indexed by other structures. Fix countable structures $\K$ and $\I$ and set $\KK= \textnormal{Age}(\K) $, and $\II = \textnormal{Age}(\I) $. We will view $\mathcal{I}$ as a class of indices, and $\mathcal{K}$ as a class of labels. Labeled structures have a deep history in combinatorics and have found a variety of applications. For example, Laver was able to encode linear orderings onto labeled trees in order to prove a conjecture by Fra\"iss\'e \cite{Laver}. Labeled structures were also more recently considered by Kubis and Shelah in order to create a \Fra\ structure with automorphism group non-universal over the automorphism groups of its substructures \cite{Diversifications}. We will show the machinery of box Ramsey degrees can be used to compute big Ramsey degrees in the case that $\KK=\II$, though the question is still open in the general setting.

	\begin{definition}
		An $\II$ indexed $\KK$ structure is a pair $(\J,l)$ where $\J \in \II$ and $l:\J\rightarrow \KK$. We call $l$ a labeling. We let $\KK^\II$ denote the class of all $\II$ indexed $\KK$ structures. This class forms a category with arrows of the form $(\iota,  \vec{f}): (\mathbf{J},l) \rightarrow (\mathbf{H},q)$, where $\iota: \J \rightarrow \mathbf{H}$, and $\vec{f}$ is indexed by $J$ with $f_j\in \textnormal{\textbf{Emb}}(l(j), q(\iota(j)) )$. Composition is given by $(\iota,\vec{f})\circ(\jmath, \vec{h}) = (\jmath \circ \iota, (f_{\iota(j) }\circ h_j)_{j\in\mathbf{J} } )$.
	\end{definition}

	Substructures of a $\KK^\II$-structure $ (\mathbf{H},q)$ are then parameterized by some $\J\subseteq \mathbf{H}$ and $l(j)\subseteq q(j) $ for each $j\in \J$. In the case that $\I$ is the countable structure with no relations, $\KK^\II$ is precisely \textit{the diversification of} $\KK$ as defined by Kubis and Shelah \cite{Diversifications}.

	\begin{definition}
		Given structures $\J$ and $\A$, we let $(\J,l_\A)$ denote the indexed structure who's labeling is $l_\A$ is the constant $\A$. 
	\end{definition}

	\begin{prop}
		If $\KK$ and $\II$ are Fra\"iss\'e, then so is $\KK^\II$. 
	\end{prop}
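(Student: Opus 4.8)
The plan is to verify the three defining properties of a \Fra\ class—hereditary property, joint embedding, and amalgamation—directly for $\KK^\II$, reducing each one to the corresponding property of $\KK$ together with that of $\II$. First observe that $\KK^\II$ consists of finite objects and has only countably many isomorphism types, since labeling a finite $\J \in \II$ by the countably many isomorphism types available in $\KK$ yields only countably many possibilities. The hereditary property is then immediate from the description of substructures: a substructure of $(\mathbf{H},q)$ is a pair $(\J,l)$ with $\J$ an induced substructure of $\mathbf{H}$ and $l(j)$ a substructure of $q(j)$ for each $j\in\J$; since $\J\in\II$ by HP of $\II$ and each $l(j)\in\KK$ by HP of $\KK$, we get $(\J,l)\in\KK^\II$.

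For joint embedding, given $(\J,l)$ and $(\J',l')$ I would first apply JEP in $\II$ to obtain $\mathbf{H}\in\II$ with embeddings $\iota:\J\to\mathbf{H}$ and $\iota':\J'\to\mathbf{H}$, and then build a labeling $q:\mathbf{H}\to\KK$ pointwise. For a point $h\in\mathbf{H}$ I collect the finite set of labels demanded of it—namely $l(j)$ whenever $\iota(j)=h$ and $l'(j')$ whenever $\iota'(j')=h$—and invoke (iterated) JEP in $\KK$ to choose $q(h)\in\KK$ admitting an embedding of each demanded label, assigning an arbitrary fixed member of $\KK$ to points that demand nothing. Recording these embeddings produces the tuples $\vec{f},\vec{f}'$ and hence arrows $(\iota,\vec{f})$, $(\iota',\vec{f}')$ into $(\mathbf{H},q)$.

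Amalgamation is the substantive step. Given $(\iota_1,\vec{f}^{\,1}):(\J,l)\to(\mathbf{B},m)$ and $(\iota_2,\vec{f}^{\,2}):(\J,l)\to(\mathbf{C},n)$, I first amalgamate the indices: AP in $\II$ yields $\mathbf{D}\in\II$ and embeddings $\kappa_1:\mathbf{B}\to\mathbf{D}$, $\kappa_2:\mathbf{C}\to\mathbf{D}$ with $\kappa_1\iota_1=\kappa_2\iota_2$. I then define the labeling $p:\mathbf{D}\to\KK$ one point at a time. A point $d=\kappa_1\iota_1(j)=\kappa_2\iota_2(j)$ coming from $\J$ carries the two embeddings $f^1_j:l(j)\to m(\iota_1(j))$ and $f^2_j:l(j)\to n(\iota_2(j))$; here I invoke AP in $\KK$ to obtain $p(d)\in\KK$ together with embeddings $h^1:m(\iota_1(j))\to p(d)$ and $h^2:n(\iota_2(j))\to p(d)$ satisfying $h^1 f^1_j=h^2 f^2_j$. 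A point lying in the image of a single $\kappa_i$ simply copies the corresponding label via the identity. The one remaining case is an accidental overlap $d=\kappa_1(b)=\kappa_2(c)$ with neither $b$ nor $c$ coming from $\J$, which is possible because AP in $\II$ need not be strong; such a $d$ carries no coherence constraint, so JEP in $\KK$ suffices to choose $p(d)$ embedding both $m(b)$ and $n(c)$.

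To close the argument I would check that these cases are exhaustive and mutually consistent: using injectivity of the four maps together with $\kappa_1\iota_1=\kappa_2\iota_2$, a point lying in both images has its $\kappa_1$-preimage in $\iota_1(\J)$ if and only if its $\kappa_2$-preimage lies in $\iota_2(\J)$, and in that event both preimages come from the same $j\in\J$, so the "from $\J$" case and the "accidental overlap" case genuinely partition the overlap. The commuting equation $h^1 f^1_j=h^2 f^2_j$ furnished by AP in $\KK$ at each $j$ is exactly the label-level condition needed for $(\kappa_1,\vec{h}^{\,1})\circ(\iota_1,\vec{f}^{\,1})=(\kappa_2,\vec{h}^{\,2})\circ(\iota_2,\vec{f}^{\,2})$, while $\kappa_1\iota_1=\kappa_2\iota_2$ supplies the index-level condition. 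The main obstacle is precisely this bookkeeping around non-strong index amalgamation: one must route the genuine coherence requirements through AP of $\KK$ and the spurious overlaps through JEP of $\KK$, and confirm that no point is simultaneously subjected to incompatible demands.
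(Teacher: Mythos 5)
Your proof is correct. The paper gives no argument here at all---it just remarks ``This is simple to verify'' and identifies the \Fra\ limit as $(\I,l_\K)$---so your direct verification of the hereditary, joint embedding, and amalgamation properties is exactly the routine argument being left to the reader; your handling of the accidental overlaps produced by non-strong amalgamation in $\II$ (routing the genuine coherence constraints at points of $\J$ through AP of $\KK$ and the spurious identifications through JEP of $\KK$) is the one delicate point in that verification, and you get it right.
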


	This is simple to verify. The Fra\"iss\'e limit of $\KK^\II$ is simply $(\I,l_\K) $. There is an intriguing relation between $\Aut((\I,l_\K))$ and the groups $\Aut(\K)$ and $\Aut(\I)$.

	\begin{prop}
		$\Aut((\I,l_\K)) = \Aut(\K) \wr_{\I} \Aut(\I)$. 
	\end{prop}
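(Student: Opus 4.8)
The plan is to exhibit an explicit bijection between $\Aut((\I,l_\K))$ and the underlying set of the wreath product, and then check that it intertwines the two group laws. First I would unwind what an automorphism of the indexed structure $(\I,l_\K)$ actually is. By the definition of arrows in $\KK^\II$, a self-map is a pair $(\iota,\vec{f})$ with $\iota\colon\I\to\I$ and $f_j\in\emb(l_\K(j),l_\K(\iota(j)))$ for each $j$. Since $l_\K$ is the \emph{constant} labeling $\K$, every source and target label equals $\K$, so each $f_j\in\emb(\K,\K)$; and since an automorphism must admit a two-sided inverse in the category, $\iota$ must be an automorphism of $\I$ and each $f_j$ a surjective self-embedding, i.e. $\iota\in\Aut(\I)$ and $f_j\in\Aut(\K)$. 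Conversely, any such pair is invertible. Thus, as a set, $\Aut((\I,l_\K))$ is precisely $\Aut(\K)^{\I}\times\Aut(\I)$, which is exactly the underlying set of $\Aut(\K)\wr_{\I}\Aut(\I)$.

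Next I would compare the two multiplications. The composition law supplied in the definition reads $(\iota,\vec{f})\circ(\jmath,\vec{h})=(\jmath\circ\iota,(f_{\iota(j)}\circ h_j)_{j})$, while the permutation wreath product $\Aut(\K)\wr_{\I}\Aut(\I)=\Aut(\K)^{\I}\rtimes\Aut(\I)$ multiplies a base element by the $\Aut(\I)$-action that re-indexes coordinates, $(b\cdot\vec{g})_j=g_{b^{-1}(j)}$. The essential observation is that the appearance of $f_{\iota(j)}$ in the composite is precisely this coordinate-permuting action of $\Aut(\I)$ on the base group $\Aut(\K)^{\I}$, while the index automorphisms multiply separately; together these produce exactly the semidirect-product twist defining a wreath product. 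I would therefore define $\Phi\colon\Aut((\I,l_\K))\to\Aut(\K)\wr_{\I}\Aut(\I)$ sending $(\iota,\vec{f})$ to $(\vec{f},\iota)$ and verify directly that $\Phi$ carries $\circ$ to wreath-product multiplication.

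The main obstacle is purely bookkeeping: reconciling the order and handedness of the two conventions. Because the categorical formula produces $\jmath\circ\iota$ rather than $\iota\circ\jmath$ in the index slot, the naive map $\Phi$ is an anti-homomorphism unless one either reads the category's composition diagrammatically or replaces $\Phi$ by the inverse-twisted variant $(\iota,\vec{f})\mapsto(\iota^{-1},(f_{\iota^{-1}(j)})_{j})$; a single consistent choice of left-versus-right action and of $\iota$ versus $\iota^{-1}$ then makes $\Phi$ a genuine group isomorphism. Once the convention is fixed, bijectivity is immediate from the first paragraph and the homomorphism property from the second, completing the identification $\Aut((\I,l_\K))=\Aut(\K)\wr_{\I}\Aut(\I)$.
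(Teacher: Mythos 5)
Your proposal is correct and takes essentially the same route as the paper: both identify an automorphism $(\iota,\vec{f})$ of $(\I,l_\K)$ with the pair $(\vec{f},\iota)$ and check that the categorical composition law matches the semidirect-product multiplication of $\Aut(\K)^\I \rtimes \Aut(\I)$. If anything, you are more careful than the paper's proof, which skips the verification that invertibility forces $\iota\in\Aut(\I)$ and each $f_j\in\Aut(\K)$, and silently passes over the handedness bookkeeping you flag (the paper's stated composition formula does indeed mix conventions in exactly the way you describe).
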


	\begin{proof}
		Recall, the definition of the wreath product $\Aut(\K) \wr_{\I} \Aut(\I)$ is the semidirect product $\Aut(\K)^\I \rtimes \Aut(\I)$, where $\Aut(\I)$ acts on $\Aut(\K)^\I$ the natural way, $\iota \cdot (g_i)_{i\in \I} = (g_{\iota(i)})_{i \in \I} $. Consider the computation of a product of pairs 
		\begin{align*}
			((g_i)_{i\in \I}, \iota) \cdot ((h_i)_{i\in \I}, \jmath) &= (((g_i)_{i\in \I}\cdot \iota) (h_i)_{i\in \I}, \iota \circ \jmath)\\
			&= ( (g_{\iota(i) h_i})_{i\in \I}, \iota \circ \jmath  )
		\end{align*}
		Given how we have defined composition for indexed structures, one immediately has that the mapping $\phi:\Aut(\K) \wr_{\I} \Aut(\I) \rightarrow \Aut( (\I,l_\K))$ given by $\phi((\iota, \vec{g} )) = ((g_i)_{i\in \I}, \iota) $ is a group isomorphism. 
	\end{proof}

	\begin{lemma}\label{Cofinal}
		If $\KK$ satisfies JEP, $\{(\J, l_\A): \J \in \II, \A \in \KK \}$ is a cofinal subclass of $\KK^\II$. 
	\end{lemma}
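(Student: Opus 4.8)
The plan is to fix an arbitrary indexed structure $(\J, l) \in \KK^\II$ and exhibit a single constant-labeled structure of the form $(\J', l_\A)$ together with an arrow $(\J, l) \to (\J', l_\A)$ in the category $\KK^\II$. Since cofinality of the subclass means precisely that every object of $\KK^\II$ admits a morphism into some member of $\{(\J, l_\A) : \J \in \II,\ \A \in \KK\}$, producing such an arrow for each $(\J, l)$ is exactly what is needed.

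The simplest route is to leave the index structure untouched, setting $\J' = \J$ and taking the first coordinate of the arrow to be $\mathrm{id}_\J$, which is trivially an embedding of $\J$ into itself. The remaining task is to select one label $\A \in \KK$ receiving an embedding from each $l(j)$. Here I would use that $\J \in \II = \Age(\I)$ is finite, so the family $\{l(j) : j \in \J\}$ is a finite collection of members of $\KK$. Applying JEP of $\KK$ repeatedly to this finite family (merging two labels at a time and using that the result stays in $\KK$) yields a single $\A \in \KK$ together with embeddings $f_j \in \emb(l(j), \A)$ for every $j \in \J$. I would then check that $(\mathrm{id}_\J, (f_j)_{j \in \J})$ genuinely matches the definition of an arrow in $\KK^\II$: the index part $\mathrm{id}_\J$ embeds $\J$ into $\J$, and each $f_j$ lies in $\emb(l(j),\, l_\A(\mathrm{id}_\J(j))) = \emb(l(j), \A)$ by construction, so the arrow $(\J, l) \to (\J, l_\A)$ is well defined. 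As $(\J, l_\A)$ sits in the claimed subclass, this witnesses cofinality.

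This argument is essentially routine, and the only point requiring any care is the iteration of JEP from pairs (as stated in the excerpt) to a finite family, which is legitimate exactly because $\J$ is finite; without finiteness of the index structure JEP alone would not suffice. The degenerate case $\J = \emptyset$ is handled by choosing any $\A \in \KK$, which is available since a \Fra\ class is nonempty, and the arrow is then vacuous on its label coordinate. Thus the main obstacle is negligible, and I expect the bulk of the write-up to be the verification that the constructed pair satisfies the formal arrow definition rather than any substantive combinatorial difficulty.
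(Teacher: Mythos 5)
Your proposal is correct and follows essentially the same route as the paper's proof: fix $(\J,l)\in \KK^\II$, keep the index structure and use $\mathrm{id}_\J$ as the first coordinate, apply JEP (iterated over the finite set $\J$) to obtain a single $\A\in\KK$ with embeddings $f_j : l(j)\rightarrow \A$, and observe that $(\mathrm{id}_\J,(f_j)_{j\in\J})$ is an arrow into $(\J,l_\A)$. The only differences are cosmetic: you spell out the iteration of JEP and the empty-index case, which the paper leaves implicit.
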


	\begin{proof}
		Take $(\J,l) \in \KK^\II$. Since $\J$ is finite and $\KK$ has JEP, there is an $\A\in \KK$ such that for all $j\in \J$, there is an embedding $f_j :l(j) \rightarrow \A$. It follows then that $(\text{id}_\J, f_j) : (\J,l) \rightarrow (\J, l_\A)$ is an embedding.
	\end{proof}

	\begin{theorem}[Jahel and Zucker \cite{JahelZucker}]
		If $1\rightarrow H \rightarrow G \rightarrow K\rightarrow 1$ is a short exact sequence of polish groups, and the universal minimal flows $M(H)$ and $M(K)$ are metrizable, then so is $M(G)$. Moreover, if $H$ and $K$ are extremely amenable, so is $G$. 
	\end{theorem}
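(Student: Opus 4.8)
The statement splits into two assertions, and I would handle the extreme amenability claim first, both because it is cleaner and because it feeds into the metrizability claim. Write $q : G \to K$ for the quotient map, so that $H = \ker q$ is a closed normal subgroup and, $G$ being Polish with $H$ closed and normal, $q$ is a continuous \emph{open} surjection. Suppose $H$ and $K$ are extremely amenable and let $G$ act continuously on a nonempty compact space $X$. Restricting to $H$ and using its extreme amenability, the fixed-point set $X^H = \{x \in X : h\cdot x = x \text{ for all } h \in H\}$ is nonempty, and it is closed, hence compact. Normality of $H$ makes $X^H$ a $G$-invariant set, since for $x \in X^H$, $g \in G$, $h \in H$ we have $h\cdot(g\cdot x) = g\cdot((g^{-1}hg)\cdot x) = g \cdot x$ as $g^{-1} h g \in H$. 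On $X^H$ the subgroup $H$ acts trivially, so the $G$-action is constant on the fibres of $q \times \mathrm{id}_{X^H}$ and therefore factors through a $K$-action; this factored action is continuous because $q$ is open, so $q \times \mathrm{id}_{X^H}$ is a quotient map. As $K$ is extremely amenable, $K$ fixes a point of $X^H$, and that point is fixed by all of $G$. Hence $G$ is extremely amenable.

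For the metrizability claim I would pass to universal minimal flows and exploit the functoriality coming from $q$. Regarding $M(K)$ as a $G$-flow through $q$, its $G$-orbits coincide with its $K$-orbits (as $q$ is onto), so $M(K)$ is a minimal $G$-flow; by universality of $M(G)$ there is a surjective $G$-map $\pi : M(G) \to M(K)$. Since $H = \ker q$ acts trivially on $M(K)$, every fibre $\pi^{-1}(y)$ is $H$-invariant, and the plan is to recognise the fibrewise dynamics as governed by $M(H)$: a minimal $H$-subflow inside a fibre receives an $H$-map from $M(H)$, while $G$ permutes the fibres over the metrizable base $M(K)$. The aim is then to present $M(G)$ as an extension of the metrizable flow $M(K)$ with metrizable fibres modeled on $M(H)$, and to upgrade ``metrizable base plus metrizable fibres'' to ``metrizable total space.''

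To make the last step rigorous I would invoke the Melleray, Nguyen Van Th\'e, and Tsankov / Zucker characterization that, for a Polish group, the universal minimal flow is metrizable precisely when the group admits a closed, extremely amenable, co-precompact subgroup (equivalently, $M(G)$ has a comeager orbit). From $M(H)$ and $M(K)$ metrizable we obtain closed extremely amenable co-precompact subgroups $H^* \le H$ and $K^* \le K$. Setting $L = q^{-1}(K^*)$ gives a closed subgroup with $G/L \cong K/K^*$ precompact, so $L$ is co-precompact in $G$; by transitivity of co-precompactness along the tower $G^* \le L \le G$ it then suffices to find a closed extremely amenable co-precompact $G^* \le L$. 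Here the restricted sequence $1 \to H \to L \to K^* \to 1$ has extremely amenable quotient $K^*$, and the extreme amenability part proved above shows that \emph{any} closed $G^* \le L$ with $q(G^*) = K^*$ and $G^* \cap H = H^*$ is automatically extremely amenable, while $L/G^* \cong H/H^*$ is precompact; such a $G^*$ would finish the proof.

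The main obstacle is exactly the production of this $G^*$: because the extension need not split and $H^*$ need not be normalized by any lift of $K^*$, one cannot simply lift $K^*$ into $L$ and adjoin $H^*$, so the clean subgroup reduction leaves a genuine gap. I therefore expect the real work to lie in replacing this naive construction with the dynamical fibre argument of the second paragraph: controlling the fibres of $\pi$ uniformly and verifying that the bundle $M(G) \to M(K)$ is metrizable as a total space, for instance by exhibiting a comeager $G$-orbit in $M(G)$ assembled from comeager orbits in $M(H)$ and $M(K)$. Securing that uniformity across fibres, rather than the formal bookkeeping, is where I anticipate the difficulty to concentrate.
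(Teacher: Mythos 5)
The paper offers no proof of this statement at all---it is imported verbatim from Jahel and Zucker \cite{JahelZucker}---so your attempt can only be judged on its own merits. The extreme amenability half is correct and complete as written: $X^H$ is nonempty (extreme amenability of $H$), compact, and $G$-invariant (normality of $H$); the $G$-action on $X^H$ factors through a continuous $K$-action because $q$, a continuous surjection of Polish groups, is open, hence $q\times \mathrm{id}_{X^H}$ is a quotient map; and a $K$-fixed point of $X^H$ is then $G$-fixed. This is the standard argument for this assertion.

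The metrizability half contains a genuine gap, and it is the one you flag yourself: nothing produces the subgroup $G^*\leq L$ with $q(G^*)=K^*$ and $G^*\cap H = H^*$. This is not a missing routine lemma. Since the extension need not split and no lift of $K^*$ need normalize $H^*$, there is no group-theoretic assembly of such a $G^*$ from $H^*$ and $K^*$; and by the Ben Yaacov--Melleray--Tsankov / Melleray--Nguyen Van Th\'e--Tsankov / Zucker characterization you invoke, the bare existence of a closed, extremely amenable, co-precompact subgroup of $G$ is \emph{equivalent} to the conclusion $M(G)$ metrizable, so your reduction restates the theorem rather than advancing it. The fallback plan in your second paragraph also rests on a false topological principle: ``metrizable base plus metrizable fibres implies metrizable total space'' fails even for compact spaces---the split interval (double arrow space) is compact and non-metrizable yet maps onto $[0,1]$ with fibres of size at most two---so any repair must exploit the group action in an essential way, not just the fibration. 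This is in fact how the cited proof proceeds: Jahel and Zucker argue dynamically, analyzing the $G$-action on the extension $M(G)\rightarrow M(K)$ and the behaviour of $H$-flows and their highly proximal extensions along its fibres, precisely because the subgroup-lifting route you attempted is unavailable.
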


	\begin{corollary}
		If $\KK$ and $\II$ have finite Ramsey degrees, then so does $\KK^\II$. Moreover, if both $\KK$ and $\II$ have the Ramsey property, so does $\KK^\II$. 
	\end{corollary}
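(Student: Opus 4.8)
The plan is to prove this dynamically, leveraging the wreath product decomposition together with the Jahel--Zucker theorem, rather than arguing combinatorially. The two halves of the correspondence I intend to use are: for a \Fra\ class, having finite Ramsey degrees is equivalent to metrizability of the universal minimal flow of its automorphism group, and (for classes of rigid structures) having the Ramsey property is equivalent to extreme amenability of that group. Thus both assertions reduce to a single statement about $\Aut((\I,l_\K))$, the automorphism group of the \Fra\ limit of $\KK^\II$.

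By the earlier proposition, $\Aut((\I,l_\K)) = \Aut(\K)\wr_{\I}\Aut(\I) = \Aut(\K)^\I \rtimes \Aut(\I)$, and the semidirect structure yields a short exact sequence of Polish groups
\begin{align*}
	1 \rightarrow \Aut(\K)^\I \rightarrow \Aut((\I,l_\K)) \rightarrow \Aut(\I) \rightarrow 1.
\end{align*}
First I would set $H = \Aut(\K)^\I$ and $K = \Aut(\I)$ and verify the Jahel--Zucker hypotheses on each factor. The factor $K$ is immediate: by hypothesis $\II$ has finite Ramsey degrees (resp.\ the Ramsey property of rigid structures), so $M(\Aut(\I))$ is metrizable (resp.\ $\Aut(\I)$ is extremely amenable).

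The factor $H = \Aut(\K)^\I$ demands a preservation result under the countable power. For the Ramsey-property half this is clean, since an arbitrary product of extremely amenable groups is extremely amenable, so extreme amenability passes from $\Aut(\K)$ to $\Aut(\K)^\I$. For the finite-Ramsey-degree half I would invoke the analogous statement that metrizability of the universal minimal flow is preserved under countable products, and this is where the real work sits. One concrete route is to identify $\Aut(\K)^\I$ with the automorphism group of the disjoint union of $\I$-many distinctly labelled copies of $\K$; its age consists of finite disjoint unions of members of $\KK$ placed in distinct labels, and a product-Ramsey argument in the spirit of \cite{ProdRamsey} shows this age has finite Ramsey degrees whenever $\KK$ does, which is exactly metrizability of $M(H)$.

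With both $M(H)$ and $M(K)$ metrizable (resp.\ both $H$ and $K$ extremely amenable), Jahel--Zucker delivers that $M(\Aut((\I,l_\K)))$ is metrizable (resp.\ $\Aut((\I,l_\K))$ is extremely amenable); translating back through the correspondence, and noting that $\KK^\II$ inherits rigidity from $\KK$ and $\II$ in the Ramsey case, yields the conclusion. The main obstacle is precisely the treatment of the infinite power $\Aut(\K)^\I$: extreme amenability is cheap, but metrizability of its universal minimal flow is the delicate point, to be pinned either on a cited product-preservation theorem or on the disjoint-union argument above. A cleaner fully combinatorial alternative worth recording bypasses the dynamics: by Lemma \ref{Cofinal} it suffices to bound Ramsey degrees of the structures $(\J,l_\A)$, and a copy of $(\J,l_\A)$ inside $(\mathbf{H},l_\B)$ decomposes as a copy of $\J$ in $\mathbf{H}$ together with independent copies of $\A$ in $\B$ along its vertices, so a direct product-Ramsey computation gives a finite bound on $t((\J,l_\A),(\I,l_\K))$.
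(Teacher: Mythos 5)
Your proposal takes essentially the same route as the paper: the corollary is placed immediately after the Jahel--Zucker theorem precisely because it is meant to follow from that theorem applied to the short exact sequence $1\rightarrow \Aut(\K)^\I \rightarrow \Aut(\K)\wr_{\I}\Aut(\I) \rightarrow \Aut(\I)\rightarrow 1$ furnished by the wreath product proposition, combined with the KPT-type correspondences between finite Ramsey degrees and metrizability of the universal minimal flow (resp.\ the Ramsey property and extreme amenability). In fact you supply more detail than the paper, which leaves the treatment of the countable power $\Aut(\K)^\I$ entirely implicit; your handling of that factor (extreme amenability of arbitrary products, and metrizability of its universal minimal flow via the labelled disjoint-union/product-Ramsey argument) is sound and fills exactly the step the paper glosses over.
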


	We now move to the study of big Ramsey degrees for indexed structures. Here, things become more challenging, as there need not be a connection between a structure $\K$ and its indices $\I$. In the case that $\K=\I$, we have a partial answer.

	\begin{theorem}
		Suppose every sequence in $\KK$ has finite box Ramsey degree in $\K$. Take $(\A_0, l) \in \KK^\KK$. Assume that the universe of $\A_0$ is $\{1,...,n\}$ and let $l(i) = \A_i$. The big Ramsey degree of $(\A_0, l)$ is bounded above by $t_\square( (\A_0,...,\A_n),\K)$.      
	\end{theorem}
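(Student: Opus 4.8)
The plan is to realise a copy of $(\A_0,l)$ in the \Fra\ limit $(\K,l_\K)$ as a single point of the box $\prod_{i=0}^{n}\binom{\K}{\A_i}$, to transport the given colouring across this identification, and then to apply the box Ramsey hypothesis. First I would spell out the correspondence. An embedding $(\iota,\vec f)\colon(\A_0,l)\to(\K,l_\K)$ consists of an embedding $\iota$ of the index $\A_0$ into $\K$ together with, for each $i\in\{1,\dots,n\}$, an embedding $f_i\in\emb(\A_i,l_\K(\iota(i)))=\emb(\A_i,\K)$, the final equality holding because the labelling $l_\K$ is constantly $\K$. Since the universe of $\A_0$ is named $\{1,\dots,n\}$, such a copy records exactly which label copy sits over which index point, and reading off the index copy $x_0=\iota[\A_0]$ and the label copies $x_i=f_i[\A_i]$ assigns to it a tuple $(x_0,x_1,\dots,x_n)\in\prod_{i=0}^n\binom{\K}{\A_i}$. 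Conversely, fixing for each $x_0\in\binom{\K}{\A_0}$ an enumeration $\phi_{x_0}\colon\A_0\to x_0$, a tuple determines a copy $\Psi(x_0,\dots,x_n)$ by placing $x_i$ over the index point $\phi_{x_0}(i)$.

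With this dictionary, given a colouring $c\colon\binom{(\K,l_\K)}{(\A_0,l)}\to k$ I would define a box colouring $\hat c\colon\prod_{i=0}^n\binom{\K}{\A_i}\to k$ by letting $\hat c(x_0,\dots,x_n)=c(\Psi(x_0,\dots,x_n))$. By hypothesis every sequence in $\KK$ has finite box Ramsey degree in $\K$, so there is a $\K'\in\binom{\K}{\K}$ with $\bigl|\hat c[\prod_{i=0}^n\binom{\K'}{\A_i}]\bigr|\le t_\square((\A_0,\dots,\A_n),\K)$. I would then take the diagonal subcopy of $(\K,l_\K)$ whose index is $\K'$ and all of whose labels are the single copy $\K'$; as $\K'\cong\K$ this is a genuine copy of the limit. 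Every copy of $(\A_0,l)$ inside it has index in $\binom{\K'}{\A_0}$ and labels in the $\binom{\K'}{\A_i}$, hence box coordinates in the sub-box, so its $c$-colour is among the at most $t_\square((\A_0,\dots,\A_n),\K)$ colours that $\hat c$ realises there. This yields the asserted bound.

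The step I expect to demand the most care, and the reason the statement is only an upper bound, is the passage from tuples to copies when $\A_0$ is not rigid. If $\A_0$ carries an automorphism $\sigma$ permuting its named points, then a fixed enumeration $\phi_{x_0}$ fails to name every copy with index $x_0$: a copy whose labels, read through $\phi_{x_0}$, appear in the $\sigma$-permuted order is isomorphic to $(\A_0,l)$ yet is not of the form $\Psi(\text{tuple})$ for any tuple of the correct label types, so the innocuous-looking claim in the previous paragraph that every copy has box coordinates in the sub-box is exactly what must be checked. To cover these copies one runs the argument over the finitely many enumerations at once, for instance by colouring each tuple with the vector $(c(\Psi_\rho(x_0,\dots,x_n)))_{\rho\in\Aut(\A_0)}$, where $\Psi_\rho$ places $x_i$ over $\phi_{x_0}(\rho(i))$, and then invoking box Ramsey on this refined colouring as in the bookkeeping behind Lemma \ref{TraceLemma}; this recovers the bound up to a factor $|\Aut(\A_0)|$ and collapses to the clean bound precisely when the labels break all symmetries of $\A_0$. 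Finally, because only the diagonal subcopies are exploited — a general subcopy of $(\K,l_\K)$ may shrink the labels differently over different index points — the true big Ramsey degree can be strictly smaller than $t_\square((\A_0,\dots,\A_n),\K)$, which is why the theorem asserts an inequality rather than an equality.
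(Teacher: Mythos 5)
Your core argument is the same as the paper's: the paper's proof consists exactly of identifying each copy $(\A_0',l')\in\binom{(\K,l_\K)}{(\A_0,l)}$ with a tuple in $\prod_{i=0}^n\binom{\K}{\A_i}$, transporting the colouring across this identification, applying the box Ramsey hypothesis, and restricting to the diagonal subcopy $(\K',l_{\K'})$. Your first two paragraphs reproduce this argument faithfully.

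The issue you raise in your third paragraph is a genuine one, and it is worth stressing that the paper does not address it at all: the paper simply asserts that the identification of copies with tuples is unique, which fails when $\A_0$ has nontrivial automorphisms. However, your analysis of \emph{when} it fails is exactly backwards. Take $\K$ the Rado graph, $\A_0$ an edge on $\{1,2\}$, $\A_1$ a vertex and $\A_2$ an edge. The two copies of $(\A_0,l)$ with index $\{u,v\}$ given by ``vertex $w$ over $u$, edge $e$ over $v$'' and ``edge $e$ over $u$, vertex $w$ over $v$'' are distinct, yet each has a unique valid reading and both read off to the \emph{same} tuple $(\{u,v\},w,e)$; so when the labels break the symmetries of $\A_0$, the correspondence between copies and tuples is $\vert\Aut(\A_0)\vert$-to-one, not a bijection, and no choice of enumerations repairs this. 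It is in the opposite case, when the label assignment $i\mapsto\A_i$ is invariant under $\Aut(\A_0)$ up to isomorphism (constant labelings $l_\A$, or rigid $\A_0$), that your map $\Psi$ is a genuine bijection and the clean bound $t_\square((\A_0,\dots,\A_n),\K)$ follows — there each ``$\sigma$-permuted'' copy is $\Psi$ of the correspondingly permuted tuple, which still lies in the product. Consequently your vector-colouring patch, which is sound, yields only $t_\square((\A_0,\dots,\A_n),\K)\cdot\vert\Aut(\A_0)\vert$ precisely in the symmetry-breaking case, and collapses to the clean bound precisely in the symmetric case — the reverse of what you claim. As far as either your argument or the paper's goes, the theorem as stated is only established when the labeling is $\Aut(\A_0)$-invariant; in the symmetry-breaking case the stated bound would need either the extra factor $\vert\Aut(\A_0)\vert$ or a new idea, since there are simply more copies of $(\A_0,l)$ over a given index than there are tuples.
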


	\begin{proof}
		Take a colouring $c: \binom{(\K,l_\K )}{(\A_0,l)} \rightarrow k$. We can identify each $(\A_0^\prime, l^\prime) \in \binom{(\K,l_\K )}{(\A_0,l)}$ uniquely with a sequence $(\A_0^\prime, \A_1^\prime,..., \A_n^\prime) \in \prod\limits_{i=0}^n \binom{\K}{\A_i}$. Since $t_\square ( (\A_0,...,\A_n),\K)$ is finite, we can find $\K^\prime \in \binom{\K}{\K}$ such that $c$ takes at most $t_\square ( (\A_0,...,\A_n),\K)$ many values restricted to $(\K^\prime, l_{\K^\prime})$. 
	\end{proof}

	While quite specialized, the above can be used to efficiently compute upperbounds for big Ramsey degrees of a very niche class of metric structures. It is quite likely this enumeration technique can be modified to a broader scope of metric \textit{spectra}, though we leave this for a later date.

	\begin{definition}
		Take $S\subseteq \RR^+ $ finite. We call such $S$ \textit{spectra}. We let $\mathcal{M}_S$ denote the category of finite metric spaces with distances in $0\cup S$, where arrows are isometric embeddings.   
	\end{definition}
	By considering isometric embeddings as arrows, as is usually done, we can view $\mathcal{M}_S$ as a family of relational structures, with $|S|$ symmetric binary relations $\{ R_s:s\in S\}$ where $xR_s y \iff d(x,y) =s$.
	\begin{definition}
		Given a metric spectra $S\subseteq \RR^+$, a \textit{block decomposition} of $S$ is a partition $S=\bigcup\limits_{i=1}^k B_i$ recursively constructed as follows:
		\begin{itemize}
			\item $B_1 = \{a\in S: a \leq 2\text{min}S \}$
			\item $B_{i+1} = \{a \in S\setminus \bigcup\limits_{j=1}^i B_i: a \leq 2 \text{min} (S\setminus \bigcup\limits_{j=1}^i B_i) \} $
		\end{itemize}
	\end{definition}
	Note that $ \text{min}(B_{i+1}) = \text{min} (S\setminus \bigcup\limits_{j=1}^i B_i) $. As a consequence, for any $i$ and $a\in B_i$, $a\leq 2\text{min}B_i $. Block analysis of metric spectra in the study of big Ramsey degree was initially developed by Masul\v{o}vi\'c \cite{Mas}. While our definition differs slightly, at its core, it serves a similar combinatorial purpose.

	\begin{definition}
		Let $S$ be a finite metric spectra with block decomposition $S= \bigcup\limits_{i=1}^k B_i $. We say two distinct blocks $B_i, B_j$ $i<j$, are \textit{independent} if $2 \textnormal{max}B_i < \textnormal{min}B_j $. We say $S$ is simple if every pair of distinct blocks is independent. 
	\end{definition}
	Simple spectra form a very small class of metric spectra. However, independence is necessary to encode metric spaces onto indexed graphs. Moreover, simple spectra satisfy \textit{the four value condition} (see \cite{Mas} for a definition) and hence, $\mathcal{M}_S$ is a \Fra\ class when $S$ is simple. Hence, there is an associated Urysohn space $\mathbb{U}_S$. 
	\begin{lemma}[Theorem 5.5 \cite{Mas}]
		If $S$ is a finite metric spectra with one block, the countable Urysohn space $\mathbb{U}_S$ has finite big Ramsey degrees.
	\end{lemma}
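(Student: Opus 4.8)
The plan is to exploit the fact that the one-block hypothesis renders the triangle inequality vacuous, so that $\mathcal{M}_S$ collapses to a purely relational binary class to which the coding tree machinery of Theorem \ref{CDPTheorem} applies directly.

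First I would record the key arithmetic observation. Since $S = B_1$, every $a \in S$ satisfies $\min S \leq a \leq 2\min S$. Consequently, for any three values $a,b,c \in S$ with $c = \max\{a,b,c\}$, we have $a + b \geq 2\min S \geq c$, so every triangle inequality among distances drawn from $S$ is satisfied automatically. It follows that an arbitrary assignment of labels from $S$ to the edges of a complete graph always yields a genuine metric space, whence $\mathcal{M}_S$ is precisely the class of finite complete graphs edge-labeled by $S$. In the relational language $\{R_s : s \in S\}$ this is exactly the setting fixed at the start of Section 3.3: every distinct pair is related by a unique symmetric irreflexive relation, so the coding tree $T$ associated to $\mathbb{U}_S$ is well-defined and the encoding/diagonal-tree apparatus is available.

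Next I would verify that this unconstrained class meets the amalgamation hypothesis required by Coulson, Dobrinen, and Patel. Because any label may be freely assigned to a new edge without violating the metric axioms, $\mathcal{M}_S$ admits disjoint (indeed free) amalgamation, and one of the conditions SDAP\textsuperscript{+} / LSDAP\textsuperscript{+} holds for such unconstrained binary classes. With this in hand, Theorem \ref{CDPTheorem} applies and yields $t(\A,\mathbb{U}_S) = \sum_{(\A,<) \in \mathcal{OA}} |\mathcal{DA}(\A,<)| < \infty$ for every finite $\A$, which is exactly the assertion that $\mathbb{U}_S$ has finite big Ramsey degrees.

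The main obstacle is the verification of SDAP\textsuperscript{+}: the trivialization of the metric constraints is immediate, but confirming that the class satisfies the precise amalgamation-theoretic hypotheses of the CDP framework — or, equivalently, proving the requisite Milliken-style Ramsey theorem for the coding tree $T$ directly, in the spirit of Sauer's treatment of binary structures \cite{Sauer} — is where the genuine content lies. A clean alternative that bypasses SDAP\textsuperscript{+} altogether is to invoke Sauer's finite-big-Ramsey-degree theorem for universal countable binary structures \cite{Sauer} directly, since the complete $S$-edge-labeled graph is exactly such a structure; this is presumably the route taken in \cite{Mas}.
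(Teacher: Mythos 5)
Your proposal is correct and takes essentially the same approach as the paper: the paper's proof simply identifies metric spaces with one-block spectra with $|S|$-edge-coloured complete graphs (the triangle inequality being automatic) and then cites the fact that finite big Ramsey degrees for the ultrahomogeneous edge-coloured complete graph are already known. Your write-up just fills in the details the paper leaves implicit, namely the arithmetic showing the triangle inequality is vacuous and the identification of which solved problem to invoke (Theorem \ref{CDPTheorem} via SDAP\textsuperscript{+}, or Sauer's theorem for unrestricted binary structures).
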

	\begin{proof}
		Important to note in this proof is that metric spaces with spectra in $S$ can be identified with $|S|$-edge-coloured complete graphs. Moreover, $\mathbb{U}_S$ can be identified with the ultrahomogeneous $|S|$-edge-coloured complete graph. As this problem is already solved, we have finite big Ramsey degrees. 
	\end{proof}

	\begin{lemma}\label{siminternal}
		Let $S$ be a simple metric spectra, and let $\mathbb{U}_S $ be the countable Urysohn space. The relation $x\sim y$ if and only if $x=y$ or $d(x,y) \in B_1$ is an equivalence relation. 
	\end{lemma}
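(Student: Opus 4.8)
The plan is to verify the three axioms of an equivalence relation, with reflexivity and symmetry being immediate and transitivity carrying all of the content. Reflexivity holds because of the clause $x=x$ in the definition of $\sim$, and symmetry follows at once from the symmetry of the metric, since $d(x,y)=d(y,x)$ means the condition $d(x,y)\in B_1$ is insensitive to the order of $x$ and $y$. So the entire argument reduces to establishing transitivity.

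For transitivity I would take $x\sim y$ and $y\sim z$ and aim to conclude $x\sim z$. After discarding the degenerate cases in which two of the three points coincide (which make the conclusion immediate either by reflexivity or by rewriting), I may assume $x,y,z$ are pairwise distinct with $d(x,y)\in B_1$ and $d(y,z)\in B_1$, and the goal is to show $d(x,z)\in B_1$. The two inputs are the triangle inequality and the independence of distinct blocks guaranteed by simplicity. First I would apply the triangle inequality together with the bounds $d(x,y)\leq \max B_1$ and $d(y,z)\leq \max B_1$ to obtain $d(x,z)\leq 2\max B_1$. Since $d(x,z)$ is a distance realized in $\mathbb{U}_S$, it lies in $S=\bigcup_{i=1}^k B_i$, hence in some block $B_j$. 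The crux, though it is short, is ruling out $j\geq 2$: here I would invoke simplicity, which says $B_1$ and $B_j$ are independent, i.e. $2\max B_1 < \min B_j$. This forces $d(x,z)\geq \min B_j > 2\max B_1$, contradicting the triangle-inequality bound $d(x,z)\leq 2\max B_1$. Therefore $j=1$, so $d(x,z)\in B_1$ and $x\sim z$.

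The main obstacle worth flagging is that the naive triangle-inequality bound $d(x,z)\leq 2\max B_1$ does not by itself place $d(x,z)$ in $B_1$, since $B_1$ only controls distances up to $2\min S$ while $2\max B_1$ could in principle be as large as $4\min S$. It is precisely the independence gap built into the definition of a \emph{simple} spectra that closes this discrepancy: the separation $2\max B_1 < \min B_j$ for every later block makes the interval of values attainable by a triangle-inequality sum out of $B_1$ fall strictly below the start of any subsequent block. This is exactly where the hypothesis that $S$ is simple is essential; for a general spectra the relation $\sim$ need not be transitive.
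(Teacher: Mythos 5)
Your proof is correct and takes essentially the same route as the paper's: bound $d(x,z)\leq d(x,y)+d(y,z)\leq 2\max B_1$ by the triangle inequality, then use independence of blocks (simplicity) to conclude this value falls below every later block, forcing $d(x,z)\in B_1$. The paper states this more tersely by comparing directly with $\min B_2$, whereas you rule out an arbitrary block $B_j$, $j\geq 2$, by contradiction; the substance is identical.
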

	\begin{proof}
		Symmetry and reflexivity is clear. Suppose $x,y,z$ are such that $d(x,y), d(y,z) \in B_1\cup \{0\}$. By the triangle inequality, we have
		\begin{align*}
			d(x,z) &\leq d(x,y) + d(y,z)\\
			&\leq 2\text{max} B_1\\
			&<  \text{min}B_2
		\end{align*}
		Thus, $d(x,z) \in B_1$.
	\end{proof}
	\begin{lemma}\label{simexternal}
		Suppose $x\sim y$ for a pair $x,y\in \mathbb{U}_S$. If $d(x,z) \in B_i$, then $d(y,z) \in B_i$. 
	\end{lemma}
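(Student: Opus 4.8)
The plan is to treat the quantity $d(x,y)$ as a small perturbation and to show it can never be large enough to move $d(y,z)$ out of whichever block $B_i$ happens to contain $d(x,z)$; the engine of the argument is the triangle inequality combined with the independence (simplicity) of the blocks. If $x=y$ there is nothing to prove, so by the definition of $\sim$ in Lemma \ref{siminternal} I may assume $d(x,y)\in B_1$, whence $d(x,y)\le \max B_1$. Writing the triangle inequality in both directions and using $\min B_i \le d(x,z)\le \max B_i$ then gives
\begin{align*}
    \min B_i-\max B_1 \;\le\; d(x,z)-d(x,y) \;\le\; d(y,z) \;\le\; d(x,z)+d(x,y) \;\le\; \max B_i+\max B_1.
\end{align*}

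Next I would record the two structural facts about a block decomposition that carry the argument. Because the recursive construction peels off the values of $S$ in increasing order, the blocks are linearly ordered, $\max B_j<\min B_{j+1}$ for every $j$, so that for each $i$ the only values of $S$ lying strictly between $\max B_{i-1}$ and $\min B_{i+1}$ are exactly those in $B_i$. Simplicity then upgrades this to the quantitative gap $2\max B_j<\min B_{j+1}$; in particular $\max B_1<\tfrac12\min B_2\le \tfrac12\min B_i$ for all $i\ge 2$, and likewise $\max B_{i-1}<\tfrac12\min B_i$.

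With these in hand I split on $i$. When $i=1$ the upper estimate reads $d(y,z)\le 2\max B_1<\min B_2$, so (for $z\ne y$) $d(y,z)$ must lie in $B_1$. When $i\ge 2$, the lower estimate yields $d(y,z)\ge \min B_i-\max B_1>\tfrac12\min B_i>\max B_{i-1}$, while the upper estimate yields $d(y,z)\le \max B_i+\max B_1<\tfrac12\min B_{i+1}+\tfrac12\min B_{i+1}=\min B_{i+1}$. Hence $\max B_{i-1}<d(y,z)<\min B_{i+1}$, and by the structural fact above $d(y,z)\in B_i$, as desired.

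The main obstacle is precisely this boundary control in the case $i\ge 2$: I must guarantee that the displacement of size at most $\max B_1$ can push $d(y,z)$ across neither the lower boundary $\max B_{i-1}$ nor the upper boundary $\min B_{i+1}$. This is exactly where \emph{simplicity} is indispensable rather than merely convenient—independence of $B_1$ from every higher block bounds $\max B_1$ by half of each relevant block gap, so a single perturbation of size $\le \max B_1$ cannot accumulate enough to reach an adjacent block. Without independence the perturbation could exceed the inter-block spacing and the conclusion would fail.
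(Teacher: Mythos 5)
Your proof is correct and takes essentially the same route as the paper's: both apply the triangle inequality in the two directions to sandwich $d(y,z)$ between $\min B_i - \max B_1$ and $\max B_i + \max B_1$, then use independence of the blocks to conclude that this interval can cross neither $\max B_{i-1}$ below nor $\min B_{i+1}$ above. Your treatment is slightly more careful on the degenerate cases ($x=y$, $z=y$, $i=1$), but the substance is identical.
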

	\begin{proof}
		Again, by triangle inequality and independence, we have
		\begin{align*}
			d(y,z) &\leq d(x,y) + d(x,z)\\
			&\leq \text{max}B_1 + \text{max}B_i\\
			&\leq 2 \text{max}B_i
		\end{align*}
		Also,
		\begin{align*}
			d(y,z) &\geq d(x,z) - d(x,y)\\
			&\geq \text{min} B_i - \text{max}B_1
		\end{align*}
		Assuming $i\neq 1$ (else there would be nothing to check) we get that $d(y,z) \geq \text{max} B_{i-1} $, else $\text{min} B_i \leq 2 \text{max}B_{i-1} $ contradicting independence. Hence, $d(y,z) \in B_i$ as desired. 
	\end{proof}
	
	\begin{theorem}
		If $S$ is a simple spectra. Suppose the block decomposition $S=\bigcup\limits_{i=1}^k B_i$ satisfies the condition $|B_1| = k-1$, and $|B_i| = 1$ for $i> 1$. $\mathbb{U}_S$ admits finite big Ramsey degrees.
	\end{theorem}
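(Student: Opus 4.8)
The plan is to realise $\mathbb{U}_S$ as an indexed structure and then reduce its big Ramsey degrees to the big Ramsey degrees of the index together with the box Ramsey degrees of the labels. First I would extract the decomposition promised by the two preceding lemmas. Lemma \ref{siminternal} makes $\sim$ (given by $x\sim y$ iff $x=y$ or $d(x,y)\in B_1$) an equivalence relation whose classes, with the induced metric, are copies of the one-block Urysohn space $\mathbb{U}_{B_1}$, since $B_1$ is a single block. Lemma \ref{simexternal} shows that if $x\sim y$ and $z$ lies in another class then $d(x,z)$ and $d(y,z)$ fall in the same block $B_i$; because $|B_i|=1$ for $i>1$ this forces $d(x,z)=d(y,z)$, so the distance between two $\sim$-classes is a well-defined element of $\{b_2,\dots,b_k\}$ (writing $b_i$ for the unique element of $B_i$). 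A short computation from independence of the blocks shows this induced distance obeys the strong triangle inequality, so $\I:=\mathbb{U}_S/\!\sim$ is the ultrametric Urysohn space on the distances $\{b_2,\dots,b_k\}$. This identifies $\mathbb{U}_S$ with the indexed structure $(\I,l_\K)$, where $\K=\mathbb{U}_{B_1}$ and the index class records the ultrametric skeleton.

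Next I would establish finiteness for each factor in isolation. For the labels, the one-block lemma (Theorem 5.5 of \cite{Mas}) gives that $\K=\mathbb{U}_{B_1}$ has finite big Ramsey degrees, and feeding this into the box Ramsey theorem of Section 3.1 shows that every finite sequence of members of $\KK=\Age(\K)$ has finite box Ramsey degree in $\K$. For the index, I would use that an ultrametric space on finitely many distances is essentially a finitely branching tree, whence $\I$ has finite big Ramsey degrees; this is the classical finiteness of big Ramsey degrees for ultrametric Urysohn spaces, provable through Milliken's tree theorem.

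To combine these, fix a finite $\A\in\Age(\mathbb{U}_S)$ and decompose it, via $\sim$, as an indexed structure $(\J,l)$ with $\J$ a finite ultrametric skeleton and each $l(j)$ a one-block piece. A copy of $(\J,l)$ in $(\I,l_\K)$ is exactly an embedding of $\J$ into $\I$ together with, for each occupied fibre, embeddings of the relevant labels into $\K$. The plan is a two-level stabilisation: first use the finite big Ramsey degree of $\J$ in $\I$ to pass to a subcopy of the index on which only finitely many index-types of $\J$ survive; then, over each surviving index-type, the remaining freedom is a product of label-copies ranging over the fibres, and the finite box Ramsey degree of the corresponding label sequence in $\K$ lets me pass to fibrewise subcopies meeting only boundedly many colours. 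Intersecting these reductions over the finitely many index-types yields a single $\mathbb{U}_S'\in\binom{\mathbb{U}_S}{\mathbb{U}_S}$ on which the colouring takes at most (number of index-types) times (box Ramsey degree of the labels) colours, which is finite.

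The hard part is precisely the combination step, because the indexed-structure theorem of this subsection is proved only for $\KK^\KK$, where the index class coincides with the label class, whereas here the index is the ultrametric class and the labels are one-block, and these are genuinely different \Fra\ classes. One therefore cannot simply fold the index and label selections into a single box Ramsey problem; the crux is to control the \emph{simultaneous} choice of an index embedding and of the fibrewise label embeddings, and the two-level argument is the device that decouples them, leaning on the fact that the index factor has finite big Ramsey degrees on its own. Here the hypotheses earn their keep: $|B_i|=1$ for $i>1$ is what collapses each upper block to a single distance and makes the quotient a genuine ultrametric (so the index factor is tractable), while $|B_1|=k-1$ with the single bottom block keeps every fibre a one-block space to which Theorem 5.5 of \cite{Mas} applies, matching the number of fibre colours to the number of ultrametric levels and allowing a uniform treatment.
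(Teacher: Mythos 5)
Your route departs from the paper's at the very first step, and on that step you are right and the paper is not. The paper's proof constructs a functor $F:\mathcal{M}_S\rightarrow \KK^\KK$ in which the index class and the label class are \emph{both} the class of $(k-1)$-edge-coloured complete graphs, asserts that $F$ is an isomorphism of categories, identifies $F(\mathbb{U}_S)$ with $(\K,l_\K)$, and then finishes in one stroke by quoting the indexed-structure theorem for $\KK^\KK$ (a single application of box Ramsey inside $\K$). Your observation that the quotient is ultrametric shows this identification cannot hold for $k\geq 3$: writing $b_i$ for the unique element of $B_i$ ($i\geq 2$), if classes $P,Q,R$ satisfy $d(P,Q)=b_i$ and $d(Q,R)=b_j$ with $i<j$, then $d(P,R)$ lies in $[b_j-b_i,\,b_j+b_i]$, and independence ($2\max B_l<\min B_{l+1}$) leaves $b_j$ as the only spectrum point in that interval; so the strong triangle inequality holds between classes. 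Consequently a triangle of classes with distances $b_2,b_2,b_3$ is impossible in $\mathbb{U}_S/\!\sim$, whereas the index of $(\K,l_\K)$ contains a triangle with two edges of the first colour and one of the second (and indeed the paper's ``reverse process'' applied to it produces a distance violation $b_3>2b_2$). So $F$ is not essentially surjective, $F(\mathbb{U}_S)$ is $(\I,l_\K)$ with $\I$ the ultrametric Urysohn space on $\{b_2,\dots,b_k\}$, exactly as you say, and the paper's appeal to the $\KK^\KK$ theorem does not apply as written. Your decomposition, and your identification of the fibres with $\mathbb{U}_{B_1}$, are the correct ones.

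However, your own proof then has a genuine gap precisely where you located it: the combination step, and the ``two-level stabilisation'' does not close it. In the first level there is no finite colouring of $\binom{\I}{\J}$ to which the big Ramsey degree of $\J$ in $\I$ can be applied: the colour of a copy of $(\J,l)$ depends jointly on the index embedding and on the fibre embeddings, and the natural candidate (colour an index copy by the induced colouring of its fibre product) has infinite range. In the second level, box Ramsey in $\K$ stabilises the fibre product over one \emph{fixed} index copy $\J^\prime$, but a fibre over a point of $\I$ is shared by infinitely many index copies, and a subcopy of $(\I,l_\K)$ requires one subcopy of $\K$ per fibre that works for all of them simultaneously; intersecting the infinitely many subcopies produced copy-by-copy does not yield a copy of $\K$. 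So the two reductions cannot be performed in either order, and what your argument actually requires is a mixed (polarized) box Ramsey statement for two \emph{different} ambient structures: for every finite colouring of $\binom{\I}{\J}\times \prod_j \binom{\K}{\A_j}$ there exist $\I^\prime\in\binom{\I}{\I}$ and a single $\K^\prime\in\binom{\K}{\K}$ so that few colours appear on $\binom{\I^\prime}{\J}\times\prod_j\binom{\K^\prime}{\A_j}$. This does not follow formally from finiteness of the degrees of each factor separately; productiveness across distinct ambient structures is exactly the phenomenon the paper's box Ramsey theorem is built to control, and that theorem only applies when all factors are shrunk by one subcopy of one structure (which is why the paper tries to force the index and label classes to coincide). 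To complete your proposal you would have to prove such a mixed theorem, for instance by a Halpern--L\"auchli/Milliken argument on the product of the finite-height tree representing $\I$ with coding trees for $\K$, or by interleaving the iterated-pigeonhole structure of $\I$ with the box Ramsey argument fibre by fibre; as written, you assume it.
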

	\begin{proof}
		Let $\mathcal{K}$ denote the category of $k-1$-edge-coloured complete graphs. We will create an isomorphism of categories $F:\mathcal{M}_S\rightarrow \KK^\KK  $. We do so as follows:
		\begin{itemize}
			\item We enumerate $S= \{s_1,...,s_{2k-2}\}$ in increasing order.
			\item We take the $\sim$ partition $\{P_i: i\in I\}$.
			\item We endow $P_i$ with a $k-1$-edge coloured complete graph structure with $xR_j y$ if and only if $d(x,y) = s_j$. This is well defined by Lemma \ref{siminternal}.
			\item We endow $I$ with a $k-1$-edge coloured complete graph structure by $i_1 R_j i_2$ if and only if $ \exists x\in P_{i_1}$ and $y\in P_{i_2}$ such that $d(x,y) = s_{k-1+j}$. This is well defined by Lemma \ref{simexternal}.
			\item Define $F((x,d)) = ( (I, \{R_j:j\in\{1,...,k-1\}),l)$, where $l(i) = (P_i,\{R_j:j\in\{1,...,k-1\})$.
		\end{itemize}
		It is clear that this is an isomorphism of category. Any isometric embedding $f:(X,d_X) \rightarrow (Y,d_y)$ lifts uniquely to a map $\hat{f}: F((X,d_X)) \rightarrow F((Y,d_Y))$. Moreover, the process described in $F$ is reversible, and an embedding of $\KK^\KK$ structures can also be identified with a corresponding $\mathcal{M}_S$ structure. The above process is also blind to cardinality, so we can make sense of the structure $F(\mathbb{U}_S)$. Ultrahomogeneity of $\mathbb{U}_S$ gives us that $F(\mathbb{U}_S)$ is ultrahomogeneous. Hence, $F(\mathbb{U}_S)$ can be identified with $(\K,l_\K)$, where $\K$ is the ultrahomogeneous $k-1$-edge coloured graph. \\
		\\
		From this point, the computation of big Ramsey degree is near immediate. We will highlight the argument for the sake of completion. It suffices to consider the embedding definition of big Ramsey degree, where we instead consider a colouring $\chi: \textbf{Emb} (\A,\mathbb{U}_S ) \rightarrow n$, where $\A \in \mathcal{M}_S$ finite. Since $F$ is an isomorphism of category, we can instead consider the corresponding colouring $\hat{\chi}: \textbf{Emb} (F(\A),(\K,l_\K)) \rightarrow n $ given by
		\begin{align*}
			\hat{\chi}(\hat{f}) &= \chi(f)
		\end{align*}
		Since every finite substructure of $\K$ has finite big Ramsey degrees, we have $t=t_\square(F(A), (\K,l_\K)) $ is finite. Hence, we can find a copy of $(\K,l_\K))$ that meets at most $t$ many colour classes with respect to $\hat{\chi}$. Since every copy of $(\K,l_\K)$ corresponds to a self embedding $\textbf{Emb}(\mathbb{U}_S, \mathbb{U}_S) $, we can find a copy $X\in \binom{\mathbb{U}_S}{\mathbb{U}_S}$ such that $|\chi[\textbf{Emb}(\A,X) ]| \leq t$. Hence, $\mathbb{U}_S$ admits finite big Ramsey degree.  
	\end{proof}
	
	The above is highly reminiscent of the Ne\v{s}et\v{r}il-R\"{o}dl partite construction \cite{NesRod}. While the case for a given $S$ is hyper specific, it is quite possible the above can be adapted to serve a wider class of metric spaces. Moreover, the above is entirely combinatorial. 
	
	\section{Concluding Remarks}
	We will conclude with some natural questions that arise. We will start with the one of most significance to the author, which was initially attempted by Vuksanovic. 
	
	\begin{question}
		For every $n$, find a clear description of the basis for linear orders of size $n$ in $\QQ$. 
	\end{question}
	
	It is clear by now a Basis exists. The author has some notes isolating what many of the equivalence relations must look like using transitivity. The current best guess uses ''stable" pairs of Devlin types $(A,B)$, which do not generate/impose any other distinct pair $(C,D)$ must belong to defining the equivalence relation by $\simeq$-chain. It is unclear whether or not all equivalence relations are uniquely determined by a set of stable pairs. 
	
	\begin{question}
		Suppose $(\mathbf{J},l) \in \mathcal{K}^\mathcal{I}$ is such that $\mathbf{J}$ has finite big Ramsey degree in $\mathbf{I}$, and $t_\square ( (\A_i)_{i\in\mathbf{I}}, \K) <\infty$. Does $(\mathbf{J},l)$, where $l(i) = \A_i$, have finite big Ramsey degree in $(\mathbf{I}, l_\mathbf{K})$?
	\end{question}
	We were only able to solve the above using the case $\K=\I$ by using the box Ramsey machinery. There is still much to be done when $\K$ and $\I$ are highly uncorrelated structures. What would be most intriguing is if the above fails. This would likely require $\K$ and $\I$ be chosen to have a strong degree of friction between each other that could be exploited by a colouring. 
	\begin{question}
		Are there nontrivial uncountable structures that admit Ramsey bases for their finite substructures?
	\end{question}
	It was noted by Baumgartner that if $\kappa$ is a Ramsey cardinal, the class Rado proof shows that projections form a Ramsey basis for $[\kappa]^n$ \cite{Baum}. The natural followup is then if there are interesting uncountable structures that admit Ramsey bases. 
	\begin{question}
		Can we express the results here in a completely categorical framework?
	\end{question}
	The category theoretical framework has proven to be beneficial in Ramsey theory on many occasions. The natural question to then ask is if the results here could be lifted to general categories. The key trick used in our proof was to consider unions of structures. There is a way to generalize unions in the category setting, however, it is still unclear that the proof would hold given how abstract and bare-bones the categorical definition is. This leads naturally to the next question. 
	\begin{question}
		Is there a reasonable approximate (or even projective) notion of Ramsey basis? 
	\end{question}
	This is arguably would be easier and more practical to consider than working explicitly in the categorical framework. However, the categorical framework is all-encompassing, and likely would solve the above question. 
	\\
	\\
	Finally, we conclude with the question that initially motivated this work. Namely, whether or not there is a connection between relational expansions and topological dynamics. 
	\begin{question}
		Suppose for every $\A \in \KK$, there is a finite Ramsey basis $\mathcal{B} \subseteq \EE(\A,\K)$. Can the natural actions of $\textnormal{Emb}(\K,\K)$ on the compact spaces $\EE(\A,\K)$ be used to compute a universal proximal flow?
	\end{question}
	
	\section{Acknowledgements}
	The author would like to thank Natasha Dobrinen, Spencer Unger, and Stevo Todorcevic for their comments. They helped greatly with the fine analysis of $\QQ$, which lead to some of the more general results listed here. The author would also like to thank Dragan Masul\v{o}vi\'c, as his motivation was critical to the completion of this project. Finally, the author would like to thank the Fields Institute for Research in Mathematical Sciences for funding the project.

\end{document}